\tikzset{
    vertex/.style={minimum size=1.5em},
    edge/.style={->,> = latex'}
    main/.style = {draw,circle},
    every picture/.style=thick
}
\definecolor{Gray}{gray}{0.9} 
\newcommand{\ru}[1]{\rule{0pt}{#1 em}}
\newlength\titlebox \setlength\titlebox{2.25in}
\theoremstyle{plain}
\newtheorem{theorem}{Theorem}
\newtheorem{corollary}[theorem]{Corollary}
\newtheorem{proposition}[theorem]{Proposition}
\newtheorem*{no-lemma}{Lemma}
\theoremstyle{definition}
\newtheorem{definition}[theorem]{Definition}
\theoremstyle{remark}
\newtheorem{remark}[theorem]{Remark}
\newtheorem{example}[theorem]{Example}
\title{On the second moment of the determinant of random symmetric, Wigner, and Hermitian matrices}
\author[1]{Dominik Beck}
\affil[1]{\small Charles University, Faculty of Mathematics and Physics, Prague}
\affil[ ]{\textit {\href{mailto:beckd@karlin.mff.cuni.cz}{\tt beckd@karlin.mff.cuni.cz}}}
\author[2]{Zelin Lv}
\author[2]{Aaron Potechin}
\affil[2]{\small The University of Chicago}
\affil[ ]{\textit {\href{mailto:zlv@uchicago.edu}{\tt zlv@uchicago.edu}}}
\affil[ ]{\textit {\href{mailto:potechin@uchicago.edu}{\tt potechin@uchicago.edu}}}
\date{\today}
\newenvironment{acknowledgements}
    {\begin{center} \bfseries \scriptsize Acknowledgements \end{center}%
    \begin{quote}\scriptsize}
    {\end{quote}}
\begin{document}
\maketitle
\begin{abstract}
In this paper, we analyze the second moment of the determinant of random symmetric, Wigner, and Hermitian matrices. Using analytic combinatorics techniques, we determine the second moment of the determinant of Hermitian matrices whose entries on the diagonal are i.i.d and whose entries above the diagonal are i.i.d. and have real expected values. Our results extend previous work analyzing the second moment of the determinant of symmetric and Wigner matrices, providing a unified approach for this analysis.
\end{abstract}

\vspace{0.5em}

\begin{acknowledgements}
Dominik Beck was supported by the Charles University, project GA UK No. 71224 and by Charles University Research Centre program No. UNCE/24/SCI/022. He would also like to acknowledge the impact of the Dual Trimester Program: ``Synergies between modern probability, geometric analysis and stochastic geometry'' on him and this research. Zelin Lv and Aaron Potechin were supported by NSF grant CCF-2008920.
\end{acknowledgements}

\newpage
\tableofcontents

\section{Introduction}
Random matrix theory, which studies the properties of matrices with random entries, has found widespread applications across numerous fields, including physics, mathematics, engineering, and computer science. The determinant, a fundamental invariant of a matrix, plays a crucial role in understanding the properties of random matrices. One way to analyze the determinant of a random $n \times n$ matrix $M$ is to analyze $\Exx\left[\det(M)^k\right]$, i.e., the $k$th moment of the determinant of $M$. In this work, we delve into the second moments of the determinants of random symmetric matrices, Wigner matrices, and Hermitian matrices.

In the study of moments of random determinants, the main idea employed by previous research is the reduction of the problem of computing these moments to counting the number of permutation tables subject to certain constraints \cite{beck2022fourth,LP2022}. For random asymmetric matrices where the entries are i.i.d. (independent and identically distributed) with mean $0$ and variance $1$, Tur\'{a}n observed that the second moment of the determinant is $n!$, Nyquist, Rice, and Riordan \cite{NRR54} determined the fourth moment of the determinant, and the authors \cite{LP2022} determined the sixth moment of the determinant. For the special case when the entries of the random matrix are Gaussian, several works \cite{forsythe1952extent, NRR54} showed that the $k$th moment of the determinant is $\prod_{j=0}^{\frac{k}{2}-1}{\frac{(n + 2j)!}{(2j)!}}$ for even $k$ (when $k$ is odd the $k$th moment is $0$). For random symmetric matrices whose entries on or above the diagonal are i.i.d. with mean $0$, the second moment of the determinant was determined by Zhurbenko \cite{zhurbenko1968moments}. 

Several variations of this question have also been analyzed. For $p \times n$ random matrices $M$, while the determinant of $M$ is not defined, we can instead consider $\Exx\left[\det (M\!M^\top )^{\!\frac{k}{2}}\right]$. Dembo \cite{dembo1989random} determined $\Exx\left[\det (M\!M^\top )^{\!2}\right]$ for random $p \times n$ matrices $M$ with i.i.d. entries with mean $0$ and variance $1$. Recently, Beck \cite{beck2022fourth} generalized the results of Nyquist, Rice, and Riordan \cite{NRR54} and Dembo \cite{dembo1989random} to the setting where $M$ has i.i.d. entries from an arbitrary distribution (which may not have mean $0$).

In this work, we extend the techniques used in these papers to determine the second moment of the determinant of random Hermitian matrices where the diagonal entries are i.i.d., the entries above the diagonal are i.i.d., and the expected value of all entries is real. To describe our results, we will first define the symmetric, Wigner, and Hermitian matrices we analyze and describe what was known about the second moment of the determinant of these matrices. We will then state our main results.

\subsection{Symmetric, Wigner, and Hermitian matrices}
\paragraph{Symmetric matrices}
Let $X$ be a symmetric matrix such that the entries of $X$ on or above the diagonal are i.i.d. In other words, the entries $\{X_{ij}: i \leq j \in [n]\}$ are i.i.d. and $X_{ij} = X_{ji}$ for all $i,j \in [n]$. Here and for the remainder of this paper, $[n] = \{1,2,\ldots,n\}$. In matrix notation,
\begin{equation}
X =
    \begin{pmatrix} 
        X_{11} & X_{12} & X_{13} & \dots  & X_{1n}\\
        X_{12} & X_{22} & X_{23} & \dots  & X_{2n}\\
        X_{13} & X_{23} & X_{33} & \dots  & X_{3n}\\
        \vdots & \vdots & \vdots & \ddots & \vdots\\
        X_{1n} & X_{2n} & X_{3n} & \dots  & X_{nn}
    \end{pmatrix}.
\end{equation}
\begin{definition}
Let $m_q = \Exx\left[X_{ij}^q\right]$ be the $q$th moment of the entries of $X$.
\end{definition}
\begin{definition}
We define $f_k(n) = \Exx[\det(X)^k]$ to be the $k$th moment of the determinant of $X$ and we define $F_k(t) = \sum_{n=0}^{\infty}{f_k(n)\frac{t^n}{n!}}$ to be the corresponding exponential generating function (EGF).
\end{definition}

For the case when $k = 2$, Zhurbenko \cite{zhurbenko1968moments} found that when $m_1 = 0$,
\begin{equation}
        F_2(t) = \frac{\exp\left(\frac{(m_4-3m_2^2) t^2}{2}-m_2 t\right)}{\left(1-m_2 t\right){}^2 \sqrt{1-m_2^2 t^2}}.
    \end{equation}

\paragraph{Wigner matrices}
Wigner matrices are a subclass of random Hermitian matrices where the real and imaginary parts are independent. In particular, for a Wigner matrix $Z$, we have that for all $j,j' \in [n]$, $Z_{jj'} = Z^{\mathrm{Re}}_{jj'} + \mathbf{i} Z^{\mathrm{Im}}_{jj'}$ where $\mathbf{i} = \sqrt{-1}$ is the imaginary unit and
\begin{enumerate}
\item $\{Z^{\mathrm{Re}}_{jj'}: j < j' \in [n]\}$ are i.i.d., $\{Z^{\mathrm{Im}}_{jj'}: j < j' \in [n]\}$ are i.i.d., $\{Z^{\mathrm{Re}}_{jj}: j \in [n]\}$ are i.i.d., and $Z^{\mathrm{Im}}_{jj} = 0$ for all $j \in [n]$.
\item For all $j > j' \in [n]$, $Z^{\mathrm{Re}}_{jj'} = Z^{\mathrm{Re}}_{j'j}$ and $Z^{\mathrm{Im}}_{jj'} = -Z^{\mathrm{Im}}_{j'j}$.
\end{enumerate}
In matrix notation,
\begin{equation}
Z =
    \begin{pmatrix} 
        Z_{11} & Z^{\mathrm{Re}}_{12} + \mathbf{i} Z^{\mathrm{Im}}_{12} & Z^{\mathrm{Re}}_{13} + \mathbf{i} Z^{\mathrm{Im}}_{13} & \dots  & Z^{\mathrm{Re}}_{1n} + \mathbf{i} Z^{\mathrm{Im}}_{1n}\\
        Z^{\mathrm{Re}}_{12} - \mathbf{i} Z^{\mathrm{Im}}_{12}  & Z_{22} & Z^{\mathrm{Re}}_{23} + \mathbf{i} Z^{\mathrm{Im}}_{23}  & \dots  & Z^{\mathrm{Re}}_{2n} + \mathbf{i} Z^{\mathrm{Im}}_{2n} \\
        Z^{\mathrm{Re}}_{13} - \mathbf{i} Z^{\mathrm{Im}}_{13}  & Z^{\mathrm{Re}}_{23} - \mathbf{i} Z^{\mathrm{Im}}_{23}  & Z_{33} & \dots  & Z^{\mathrm{Re}}_{3n} + \mathbf{i} Z^{\mathrm{Im}}_{3n} \\
        \vdots & \vdots & \vdots & \ddots & \vdots\\
        Z^{\mathrm{Re}}_{1n} - \mathbf{i} Z^{\mathrm{Im}}_{1n}  & Z^{\mathrm{Re}}_{2n} - \mathbf{i} Z^{\mathrm{Im}}_{2n}  & Z^{\mathrm{Re}}_{3n} - \mathbf{i} Z^{\mathrm{Im}}_{3n}  & \dots  & Z_{nn}
    \end{pmatrix}.
\end{equation}

\begin{definition}
For $j < j' \in [n]$, let $\mu_q = \Exx[(Z^{\mathrm{Re}}_{jj'})^q]$ be the $q$th moment of the real part of $Z_{jj'}$ and let $\nu_q = \Exx[(Z^{\mathrm{Im}}_{jj'})^q]$ be the $q$th moment of the imaginary part of $Z_{jj'}$. For $j \in [n]$, let $\kappa_q = \Exx[(Z_{jj})^q]$ be the $q$-th moment of $Z_{jj}$.

In this paper, we assume that $\mu_1 = \nu_1 = \kappa_1 = 0$ as we will handle the expected value of the entries of Wigner matrices separately.
\end{definition}
Instead of analyzing $\Exx[(\det Z)^2]$, we analyze a more general expression called the \emph{characteristic correlation function} where we can adjust the expected values of the entries of each copy of $Z$.
\begin{definition}
We define the $n \times n$ matrix $J$ to be the all ones matrix. In other words, $J_{jj'} = 1$ for all $j,j' \in [n]$.
\end{definition}
\begin{definition}
We define the \emph{characteristic correlation function} to be
\begin{equation}
h_k(n,a_1,\ldots,a_k,c_1,\ldots,c_k) = \Exx \prod_{r=1}^k\det(Z + {a_r}J + {c_r}Id)
\end{equation}
and we define its EGF (exponential generating function) to be
\begin{equation}
H_k(t,a_1,\ldots,a_k,c_1,\ldots,c_k) = \sum_{n=0}^\infty \frac{t^n}{n!} h_k(n,a_1,\ldots,a_k,c_1,\ldots,c_k).
\end{equation}
\end{definition}
\begin{example}
When $n = 2$, we have that 
\[
f_2(n) = \Exx \left[\det\left(
\begin{matrix} 
X_{11} & X_{12}\\
X_{21} & X_{22}\\
\end{matrix}
\right)^{2}
\right]
\]
while $h_2(n,a_1,a_2,c_1,c_2)$ is equal to 
\[
\Exx \left[\det\left(\left(
\begin{matrix} 
Z^{\mathrm{Re}}_{11}& Z^{\mathrm{Re}}_{12} + Z^{\mathrm{Im}}_{12}i\\
Z^{\mathrm{Re}}_{12} - Z^{\mathrm{Im}}_{12}i & Z^{\mathrm{Re}}_{22}
\end{matrix}\right)  + {a_1}J + {c_1}Id
\right)\det\left(\left(
\begin{matrix} 
Z^{\mathrm{Re}}_{11}& Z^{\mathrm{Re}}_{12} + Z^{\mathrm{Im}}_{12}i\\
Z^{\mathrm{Re}}_{12} - Z^{\mathrm{Im}}_{12}i & Z^{\mathrm{Re}}_{22}
\end{matrix}\right)  + {a_2}J + {c_2}Id
\right)
\right]
\]
\end{example}

In general, the characteristic correlation function $h_k(n,a_1,\ldots,a_k)$ is a polynomial in $\{\mu_q, \nu_q: q \in [2,2k]\} \cup \{\kappa_q: q \in [2,k]\} \cup \{a_r,c_r: r \in [k]\}$. Note that the symmetric matrices described above are a special case of Wigner matrices where $\kappa_q =\mu_q$, $\nu_q = 0$, $a_1 = \cdots = a_k = a$, and $c_1 = \cdots = c_k = 0$. In this case, we can set $m_1 = a$ and $m_q = \Exx\left[(Z_{ij} + m_1)^q\right]$ so $h_2(n,a,a,0,0)$ coincides with $f_2(n)$ where $m_1 = a$, $m_2 = m_1^2 + \mu_2$, $m_3 = m_1^3 + 3m_1\mu_2 + \mu_3$, and $m_4 = m_1^4 + 6m_1^2\mu_2 + 4m_1\mu_3 + \mu_4$.

For $k = 2$, a special case of $H_2(t,a_1,a_2,c_1,c_2)$ was derived by Götze and Kösters \cite{gotze2009second}. They found that when $\mu_2 = \nu_2 = \frac{\kappa_2}{2}$, $\mu_3 = \nu_3 = 0$, $\mu_4 = \nu_4 = \frac{\kappa_4}{4}$,
\begin{equation}
        H_2(t,0,0,c_1,c_2) = \frac{\exp \left(c_1 c_2 t -\frac{t^2 \left(c_1^2+c_2^2\right) \kappa _2}{2 \left(1-t^2 \kappa _2^2\right)}+\frac{t^3 c_1 c_2 \kappa_2^2}{1-t^2 \kappa _2^2}+ \frac{t^2}{4} \left(\kappa_4 - 3 \kappa_2^2\right)\right)}{\left(1- \kappa_2 t\right) \sqrt{1-t^2 \kappa_2^2}}.
    \end{equation}

Shortly afterwards, K\"{o}sters \cite{kosters2008second} found $H_2(t,0,0,c_1,c_2)$ for the special case where $\mu_2 = \frac{\kappa_2}{2}$, $\mu_3 = 0$, $\nu_q = 0$ for all $q \in [4]$, and $\mu_4 = \frac{\kappa_4}{4}$. Note that this corresponds to the case when $Z = X + X^T$ for an asymmetric random matrix $X$ with i.i.d. real entries whose odd moments are $0$.

Not much is known about higher moments in general, but there is an important result if the random Wigner matrix is drawn from the Gaussian Unitary Ensemble (GUE). In this case, each diagonal entry $Z_{jj}$ is a standard normal variable while the entries $\{Z_{jj'}: j < j'\}$ above the diagonal are standard complex normal random variables, which means that $Z_{jj'} \sim \mathcal{N}(0,\frac{1}{2})+\mathbf{i} \mathcal{N}(0,\frac{1}{2})$. For the GUE, Mehta and Normand \cite{mehta1998probability} found the value of $h_k(n,0,\ldots,0,0,\ldots, 0)$ for any $k$ and $n$.
\paragraph{Hermitian matrices}
Finally, we consider the more general case of random Hermitian matrices where the expected value of each entry is real. In particular, we consider $n \times n$ matrices $Z$ where
\begin{enumerate}
\item The entries $\{Z_{jj'}: j < j' \in [n]\}$ are i.i.d. and for all $j < j' \in [n]$, $Z_{j'j} = \overline{Z_{jj'}}$. Note that these entries are complex numbers.
\item The entries $\{Z_{jj}: j \in [n]\}$ are real and i.i.d..
\end{enumerate}
In matrix notation,
\begin{equation}
Z =
    \begin{pmatrix} 
        Z_{11} & Z_{12} & Z_{13} & \dots  & Z_{1n}\\
        \overline{Z}_{12} & Z_{22} & Z_{23} & \dots  & Z_{2n}\\
        \overline{Z}_{13} & \overline{Z}_{23} & Z_{33} & \dots  & Z_{3n}\\
        \vdots & \vdots & \vdots & \ddots & \vdots\\
        \overline{Z}_{1n} & \overline{Z}_{2n} & \overline{Z}_{3n} & \dots  & Z_{nn}
    \end{pmatrix}.
\end{equation}
\begin{definition}
For $j < j'$, we let $\lambda_{pq} = \Exx\left[Z_{jj'}^p\overline{Z_{jj'}^q}\right]$ be the moments of the entries of $Z$ which are above the diagonal. Note that $\lambda_{pq}=\overline{\lambda}_{qp}$. As before, for $j \in [n]$ we let $\kappa_q = \Exx [Z_{jj}^q]$. Unless noted otherwise, we assume that $\lambda_{10} = \kappa_1 = 0$.
\end{definition}
As before, we take the \emph{characteristic correlation function} to be
$h_k(n,a_1,\ldots,a_k,c_1,\ldots,c_k) = \Exx \prod_{r=1}^k\det(Z + {a_r}J + {c_r}Id)$
and we take its EGF (exponential generating function) to be \\
$H_k(t,a_1,\ldots,a_k,c_1,\ldots,c_k) = \sum_{n=0}^\infty \frac{t^n}{n!} h_k(n,a_1,\ldots,a_k,c_1,\ldots,c_k)$.
\begin{remark}
Wigner matrices are a special case of random Hermitian matrices where we have that
\begin{equation}
\lambda_{pq} = \sum_{k=0}^{p+q} \mu_{k} \nu_{p+q-k} \mathbf{i}^{p+q-k} \sum_{j=0}^q \binom{p}{k-j} \binom{q}{j} (-1)^{q-j}.
\end{equation}
\end{remark}
The following Table lists the moments $\lambda_{pq}$ for Wigner matrices for $p,q \leq 3$.
\begin{table}[ht]
\centering
\begin{tabular}{|c|c|c|c|c|c|}
\hline
 \multicolumn{2}{|c|}{\multirow{2}{*}{$\lambda_{pq}$}} & \multicolumn{4}{c|}{$q$}  \\ \cline{3-6}
   \multicolumn{2}{|c|}{}  & $0$ & $1$ & $2$ & $3$ \\
\hline
\multirow{4}{*}{$p$} & $0$ & $1$ & $0$ & $\mu_2-\nu _2$ & $\mu _3+\mathbf{i} \nu _3$ \\
 & $1$ & $0$ & $\mu_2+\nu_2$ & $\mu_3-\mathbf{i} \nu_3$ & $\mu_4-\nu_4$ \\
 & $2$ & $\mu _2-\nu _2$ & $\mu _3+\mathbf{i} \nu _3$ & $\mu_4+2\mu_2 \nu_2+\nu_4$ & $\mu_5+2\mu_3 \nu_2- 2\mathbf{i}\mu_2 \nu_3-\mathbf{i} \nu_5$ \\
 & $3$ & $\mu _3-\mathbf{i} \nu _3$ & $\mu _4-\nu _4$ & $\mu_5+2 \mu_3 \nu _2+2 \mathbf{i} \mu_2 \nu_3+\mathbf{i} \nu _5$ & $\mu_6+3\mu_4\nu_2+3\mu_2 \nu_4+\nu_6$ \\
\hline
\end{tabular}
\caption{Values of $\lambda_{pq}$ in terms of $\mu_p$ and $\nu_q$ for Wigner matrices}
\label{tab:Wigner}
\end{table}
\subsection{Main results}
In this paper, we generalize all of the results mentioned above on the second moment of the determinant of random symmetric matrices and Wigner matrices. For symmetric matrices, we generalize Zhurbenko's result on the second moment of the determinant to handle all distributions of the entries $X_{ij}$.
\begin{theorem}
For random symmetric matrices, for any distribution of the entries $X_{ij}$,
\begin{equation*}
F_2(t) = \left(1+2 m_1 \mu _3 t^2+m_1^2 \left(\mu_3^2t^4+\frac{t}{1-\mu _2^2 t^2}+\frac{2 \mu _2 t^2}{1-\mu _2 t}\right)\right) \frac{\exp\left(\frac{(\mu _4-3\mu_2^2) t^2}{2}-\mu _2 t\right)}{\left(1-\mu _2 t\right){}^2 \sqrt{1-\mu _2^2 t^2}}.
\end{equation*}
\end{theorem}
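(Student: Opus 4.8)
The plan is to follow the diagrammatic analytic‑combinatorics method used throughout this paper, specialized to the symmetric case, and to isolate the dependence on the mean $m_1$. First I would expand $\det(X)^2=\sum_{\sigma,\tau\in S_n}\operatorname{sgn}(\sigma)\operatorname{sgn}(\tau)\prod_{i=1}^n X_{i\sigma(i)}X_{i\tau(i)}$ and take expectations. Since the entries are i.i.d.\ and $X_{ij}=X_{ji}$, the expectation factorizes over unordered pairs: if the undirected pair $\{i,j\}$ (or a diagonal ``pair'' $\{i,i\}$) is covered by a total of $q$ of the $2n$ directed edges coming from $\sigma$ and $\tau$, it contributes $m_q$. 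Thus $f_2(n)=\sum_{\sigma,\tau}\operatorname{sgn}(\sigma)\operatorname{sgn}(\tau)\prod_e m_{q_e}$, a signed, moment‑weighted sum over the overlay diagram obtained by drawing a solid edge $i\to\sigma(i)$ and a dashed edge $i\to\tau(i)$ at every vertex. Each undirected pair carries at most four directed edges, so $q_e\in\{1,2,3,4\}$ and only $m_1,\dots,m_4$ occur.

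Next I would separate the mean. Writing $X=m_1 J+Y$ with $Y$ centered, the identity $m_q=\sum_b\binom{q}{b}m_1^{q-b}\mu_b$ amounts to coloring each directed edge red (a factor $m_1$, from the $m_1 J$ part) or blue (from $Y$), with a pair carrying $b$ blue edges weighted $\mu_b$; since $\mu_1=0$, any pair with exactly one blue edge vanishes. The structural reason the final answer is only quadratic in $m_1$ is that $m_1 J$ has rank one, so $\det(Y+m_1 J)$ is affine‑linear in $m_1$; hence at most one red edge survives per determinant factor and $F_2(t)=F^{(0)}(t)+m_1 F^{(1)}(t)+m_1^2 F^{(2)}(t)$.

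Then I would apply the symbolic method. The overlay diagrams split into connected components, and a ``set'' of repeatable components has EGF equal to the exponential of the connected‑component EGF $C(t)$. The purely blue (centered) components are small—doubled, tripled, and quadrupled edges on two vertices, three‑vertex overlaps, diagonal self‑loop gadgets, and alternating solid–dashed cycles of every length—and I would compute each one's contribution, including its sign and its $1/\lvert\mathrm{Aut}\rvert$ symmetry factor. The alternating cycles produce the $-2\log(1-\mu_2 t)$ and $-\tfrac12\log(1-\mu_2^2 t^2)$ contributions to $C(t)$, while the finitely many bounded gadgets produce the polynomial numerator $\tfrac12(\mu_4-3\mu_2^2)t^2-\mu_2 t$, so that $F^{(0)}(t)=\exp(C(t))$ equals Zhurbenko's $Q(t)=\frac{\exp((\mu_4-3\mu_2^2)t^2/2-\mu_2 t)}{(1-\mu_2 t)^2\sqrt{1-\mu_2^2 t^2}}$; alternatively one may simply invoke the stated $m_1=0$ formula at this point.

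Finally, the red edges live in a single distinguished (``marked'') component that multiplies the background $\exp(C(t))$, which is exactly why they contribute a prefactor $M(t)=1+m_1 R_1(t)+m_1^2 R_2(t)$ rather than exponentiating. A single red edge sits on a two‑vertex gadget of weight $m_1\mu_3$, giving $R_1(t)=2\mu_3 t^2$ after accounting for the two factors/orientations; two red edges either form two detached gadgets (the $\mu_3^2 t^4$ term) or share one alternating path of unbounded length, whose geometric summation yields the rational pieces $\frac{t}{1-\mu_2^2 t^2}$ and $\frac{2\mu_2 t^2}{1-\mu_2 t}$. Multiplying $M(t)=P(t)$ by $Q(t)$ then gives the claimed formula. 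The main obstacle is precisely this last enumeration: correctly listing the marked components, fixing every sign and symmetry factor under the $X_{ij}=X_{ji}$ identification, and resumming the unbounded alternating paths that carry the two red edges so as to produce exactly the rational factors—this careful bookkeeping, rather than any single hard idea, is the crux.
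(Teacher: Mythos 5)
Your proposal follows essentially the same route as the paper: centering via $X = Y + m_1 J$, using the rank-one/affine-linearity of $\det(Y+m_1J)$ in $m_1$ (which the paper proves directly by a swap-cancellation argument and notes is the Matrix Determinant Lemma) to limit each determinant factor to at most one ``red'' (marked) edge, computing the unmarked background $G_2^0(t)$ by the exponential formula over fixed points, mussels, loops, and necklaces, and then identifying exactly the paper's marked components --- the $1$-marked mussel giving $2m_1\mu_3 t^2$, and for two marks the detached pair ($\mu_3^2 t^4$), the $2$-marked chains ($\frac{t}{1-\mu_2^2t^2}$), and the $2$-marked loops ($\frac{2\mu_2 t^2}{1-\mu_2 t}$) --- as a multiplicative prefactor. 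The only cosmetic slips are descriptive (the $\frac{2\mu_2 t^2}{1-\mu_2 t}$ term comes from marked \emph{double cycles} with parallel edge pairs, not an alternating solid--dashed path), so the plan is correct and matches the paper's proof.
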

For Wigner matrices, we obtain the following result.
\begin{theorem}[Wigner matrices]
For Wigner matrices, for any distributions of $\{Z^{\mathrm{Re}}_{jj'}: j < j' \in [n]\}$, $\{Z^{\mathrm{Im}}_{jj'}: j < j' \in [n]\}$, and $\{Z_{jj}: j \in [n]\}$ with expected value $0$,
\begin{align*}
& H_2(t,a_1,a_2,c_1,c_2) = \Bigg{[}1+(a_1 + a_2) \mu_3 t^2+\frac{(a_1c_2+a_2c_1) t}{1-(\mu_2+\nu_2)^2 t^2} -\frac{(\mu_2+\nu_2)(a_1c_1+a_2c_2) t^2}{1-(\mu_2+\nu_2)^2t^2}\\
&  + \left.  a_1 a_2 \Bigg{(}\frac{2t^2 \left(\mu_2-\mu_2^2 t+\nu_2^2 t\right)}{1-2\mu_2t+(\mu_2^2-\nu_2^2)t^2}+\frac{t}{1-(\mu_2+\nu_2)^2 t^2}+ \bigg{(}\mu_3 t^2 + \frac{c_2 t}{1\!-\!\left(\mu_2\!+\!\nu_2\right){}^2 t^2} \right.\\
&  - \frac{c_1 \left(\mu _2+\nu _2\right) t^2}{1\!-\!\left(\mu _2\!+\!\nu_2\right){}^2 t^2}\bigg{)} \bigg{(}\mu_3 t^2\!+\!\frac{c_1 t}{1\!-\!\left(\mu_2\!+\!\nu_2\right){}^2 t^2}\!-\!\frac{c_2 \left(\mu _2+\nu _2\right) t^2}{1\!-\!\left(\mu _2\!+\!\nu _2\right){}^2 t^2}\bigg{)}\Bigg{)}\Bigg{]}\\
& \frac{\exp \left(t \left(c_1 c_2\!+\!\kappa_2\!-\!2\mu_2\right)+\frac{1}{2} t^2 \left(\mu_4\!-\!3 \nu_2^2\!-\!3 \mu_2^2\!+\!\nu_4\right)-\frac{t^2}{2}\frac{(c_1^2+c_2^2) \left(\mu_2+\nu_2\right)}{1-t^2 \left(\mu _2+\nu _2\right){}^2}+\frac{c_1 c_2
   \left(\mu _2+\nu _2\right){}^2 t^3}{1-\left(\mu _2+\nu _2\right){}^2 t^2}\right)}{\left(1-2 \mu _2 t+\left(\mu _2^2-\nu _2^2\right)
   t^2\right) \sqrt{1-t^2 \left(\mu _2+\nu _2\right){}^2}}.
\end{align*}
\end{theorem}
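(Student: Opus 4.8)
The plan is to reduce the computation to a weighted enumeration of pairs of permutations and then convert that enumeration into a generating function via the exponential formula, exactly in the spirit of the symmetric-matrix theorem stated above. First I would expand both determinants by the Leibniz formula, writing $\det(Z+a_rJ+c_rId)=\sum_{\sigma_r\in S_n}\sgn(\sigma_r)\prod_{j=1}^n (Z+a_rJ+c_rId)_{j,\sigma_r(j)}$, so that the product over $r=1,2$ becomes a double sum over $(\sigma_1,\sigma_2)$. Each matrix entry splits as $(Z)_{jk}+a_r+c_r\delta_{jk}$, and taking the expectation factorizes over the independent groups of entries: each off-diagonal unordered pair $\{j,k\}$ contributes a factor depending only on how many times $Z_{jk}$ and its conjugate $\overline{Z_{jk}}=Z_{kj}$ are selected across the two products, and each diagonal entry contributes a factor depending on how often $Z_{jj}$ is selected. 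Since $\mu_1=\nu_1=\kappa_1=0$, any pair used only once gives a zero factor, so only configurations in which every off-diagonal pair is used at least twice survive; these weights are precisely the moments $\mu_q,\nu_q,\kappa_q$ (equivalently the $\lambda_{pq}$ of the Hermitian remark), and the sign/conjugation bookkeeping is where the assumption of real expected values keeps everything real.

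Next I would record that $a_rJ=a_r\mathbf 1\mathbf 1^\top$ is rank one, so $\det(Z+a_rJ+c_rId)$ is \emph{affine} in $a_r$; hence $H_2$ is bilinear in $(a_1,a_2)$, which already predicts the four-term shape (constant, linear in each $a_r$, and the $a_1a_2$ term) of the bracketed prefactor. Concretely I would write $\det(Z+a_rJ+c_rId)=\det(Z+c_rId)+a_r\,\mathbf 1^\top\operatorname{adj}(Z+c_rId)\mathbf 1$ and track, in the permutation model, a \emph{special $J$-edge} that may be inserted at most once per copy $r$. Combinatorially a pair $(\sigma_1,\sigma_2)$ together with the choices of which entries supply $Z$, $a_r$, or $c_r$ becomes a decorated directed multigraph on $[n]$: one class of edges from $\sigma_1$, one from $\sigma_2$, self-loops for diagonal/identity choices, crosses marking the special $J$-insertions, and dashed edges recording the conjugate copy --- this is exactly the local vocabulary depicted in the two-vertex and one-vertex diagrams. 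The mean-zero constraint forces each connected component to be one of a short list of cycle types, so the whole configuration decomposes into independent labeled components.

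The heart of the argument is the exponential formula: because the permutation model is symmetric under relabeling the vertices, the EGF $H_2$ equals $\exp$ of the EGF of \emph{connected} components, multiplied by the polynomial prefactor that collects the at-most-once special $J$-edges. I would therefore enumerate the connected component types and compute each one's contribution as a function of $t$. A plain cycle of length $\ell$ with per-edge weight $w$ contributes $\sum_{\ell\ge 1}(\ell-1)!\,w^\ell t^\ell/\ell!=-\log(1-wt)$ to the exponent; the two directions in which a Hermitian cycle can be traversed and the pairing of $Z_{jk}$ with $\overline{Z_{jk}}$ produce the square-root (half-integer log) contributions and the factorization $1-2\mu_2t+(\mu_2^2-\nu_2^2)t^2=(1-(\mu_2+\nu_2)t)(1-(\mu_2-\nu_2)t)$ that appears in the denominator. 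Cycles carrying one or two diagonal ($c_r$, $\kappa_2$, $\kappa_4$) marks produce the $c_1c_2+\kappa_2-2\mu_2$ and $\tfrac12(\mu_4-3\nu_2^2-3\mu_2^2+\nu_4)$ terms together with the $c$-dependent rational terms in the exponent, while the components carrying special $J$-edges, summed as geometric series in $t$, assemble into the bracketed prefactor with its $1/(1-(\mu_2+\nu_2)^2t^2)$ factors.

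The step I expect to be the main obstacle is the exhaustive and correctly-weighted classification of these connected components --- in particular disentangling how a special $J$-edge interacts with an ordinary cycle and with diagonal insertions, and getting every conjugation-induced sign right so that the $\mu,\nu,\kappa$ weights combine into the real rational functions above; this is the analogue of the delicate case analysis encoded in the diagrams. Once the connected EGFs and the prefactor series are in hand, assembling and simplifying them yields the claimed formula, and I would verify it by specializing: setting $a_1=a_2=0$, $\mu_2=\nu_2=\kappa_2/2$, $\mu_3=\nu_3=0$, $\mu_4=\nu_4=\kappa_4/4$ must recover the G\"otze--K\"osters expression, and reducing to the symmetric case ($\kappa_q=\mu_q$, $\nu_q=0$, $c_r=0$, $a_1=a_2=a$) must recover the symmetric-matrix Theorem above.
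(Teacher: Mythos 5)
Your overall strategy coincides with the paper's: Leibniz expansion over pairs of permutations; the observation that $\det(Z+a_rJ+c_rId)$ is affine in $a_r$ (the paper proves this by pairing each permutation with a transposed partner, and remarks it is the matrix determinant lemma), so the $a_r$-insertions become marked edges occurring at most once per color; the representation of the surviving terms as two-colored multigraphs in which every off-diagonal pair of vertices carries at least two edges while $c_r$-weighted self-loops are now allowed; and the $\textsc{Set}$/exponential-formula assembly, with the marked components split off multiplicatively, exactly as in the paper's decomposition $H_2=H_2^0+H_2^1+H_2^2$ with $H_2^r=(\text{sum of }r\text{-marked connected EGFs})\cdot H_2^0$. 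Up to that point your plan is the paper's proof, and your proposed sanity checks (G\"otze--K\"osters and the symmetric specialization) are sound.

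The gap is that the step you defer --- the ``exhaustive and correctly-weighted classification of these connected components'' --- \emph{is} the theorem, and in the one place where you commit to a mechanism you get it wrong. In the paper's taxonomy the unmarked components are: fixed points (weight $(c_1c_2+\kappa_2)t$), mussels (weight $(\mu_4+2\mu_2\nu_2+\nu_4)t^2$), loops in two orientation classes (anti-parallel with per-vertex weight $\mu_2+\nu_2$, parallel with $\mu_2-\nu_2$), necklaces, chains, and dumbbells. The two traversal directions of a loop each contribute an ordinary $\textsc{Cyc}$, i.e.\ an integer-power logarithm, and together they produce the factor $\bigl((1-(\mu_2+\nu_2)t)(1-(\mu_2-\nu_2)t)\bigr)^{-1}=\bigl(1-2\mu_2t+(\mu_2^2-\nu_2^2)t^2\bigr)^{-1}$ --- not the square root, contrary to your attribution. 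The square root $1/\sqrt{1-(\mu_2+\nu_2)^2t^2}$ instead comes from the necklaces, a component type your sketch never isolates: closed cycles of alternating dashed/solid $2$-cycles, counted with symmetry factor $\tfrac12$ because each necklace coincides with its mirror image. Likewise the $c$-dependent exponent terms come not from ``cycles carrying diagonal marks'' but from two distinct open component types: chains (one $c_1$- and one $c_2$-endpoint, giving the $c_1c_2(\mu_2+\nu_2)^2t^3/(1-(\mu_2+\nu_2)^2t^2)$ term) and dumbbells (two endpoints of the \emph{same} color, carrying a negative sign from the odd number of $2$-cycles and another $\tfrac12$ symmetry factor, giving $-\tfrac{t^2}{2}(c_1^2+c_2^2)(\mu_2+\nu_2)/(1-(\mu_2+\nu_2)^2t^2)$); your outline would not produce that sign or those $\tfrac12$'s. (Also, $\kappa_4$ cannot occur at $k=2$: each vertex carries exactly one self-loop of each color, so a diagonal entry is sampled at most twice, and the marked-mussel weights $a_r(\mu_3\pm\mathbf{i}\nu_3)t^2$ must be seen to cancel their imaginary parts to yield $(a_1+a_2)\mu_3t^2$.) Without this taxonomy and its marked variants (marked mussels, marked chains and dumbbells for one mark; marked loops, marked chains, and \emph{pairs} of one-marked components for two marks --- the last giving the product term in the bracket), the final assembly cannot be carried out, so what you have is the correct framework rather than a proof.
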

\begin{corollary}
For the special case when $c_1 = c_2 = c$,
\begin{align*}
& \textstyle{H_2(t,a_1,a_2,c,c) = \left[1+\left(a_1+a_2\right) \left(\mu_3 t^2 + \frac{ct}{1 + (\mu_2+\nu_2)t}\right)+a_1 a_2 \left(\mu_3^2 t^4+\frac{2c \mu_3 t^3}{1+(\mu_2+\nu_2)t} +\frac{c^2 t^2}{\left(1+(\mu_2+\nu_2)t\right){}^2} \right. \right.}\\
& \textstyle{ \left. \left. + \frac{2 t^2 \left(\mu_2 - \mu_2^2 t +\nu_2^2 t\right)}{1\!-\!2 \mu_2t\!+\!\left(\mu_2^2\!-\!\nu_2^2\right) t^2} \!+\!\frac{t}{1\!-\!\left(\mu_2\!+\!\nu_2\right){}^2 t^2}\right)\right]}
\displaystyle \frac{\exp \left(t \left(c^2\!+\!\kappa_2\!-\!2\mu_2\right) \!+\! \frac{1}{2} t^2 \left(\mu_4\!-\!3 \nu_2^2 \!-\! 3\mu_2^2 \!+\! \nu_4\right) \!-\! \frac{t^2 c^2 \left(\mu_2 + \nu_2\right)}{1 + t \left(\mu_2 + \nu_2\right)}\right)}{\left(1 - 2\mu_2t + (\mu_2^2-\nu_2^2)t^2\right) \sqrt{1-\left(\mu_2+\nu_2\right){}^2 t^2}}.
\end{align*}
\end{corollary}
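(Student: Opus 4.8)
The plan is to obtain the Corollary directly from the Wigner matrices Theorem by specializing $c_1 = c_2 = c$ and simplifying; no new combinatorial input is needed, and the entire task is algebraic. The key observation that drives every simplification is the factorization
\[
1 - (\mu_2+\nu_2)^2 t^2 = \big(1 - (\mu_2+\nu_2)t\big)\big(1 + (\mu_2+\nu_2)t\big),
\]
which repeatedly lets a factor of $1 - (\mu_2+\nu_2)t$ cancel against the denominator and collapse expressions into the form $\tfrac{(\cdots)}{1+(\mu_2+\nu_2)t}$ that appears throughout the Corollary.

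First I would handle the terms linear in $a_1+a_2$. Setting $c_1=c_2=c$ turns both $a_1 c_2 + a_2 c_1$ and $a_1 c_1 + a_2 c_2$ into $(a_1+a_2)c$, so these terms combine into
\[
(a_1+a_2)\left(\mu_3 t^2 + \frac{ct - (\mu_2+\nu_2)ct^2}{1-(\mu_2+\nu_2)^2t^2}\right) = (a_1+a_2)\left(\mu_3 t^2 + \frac{ct}{1+(\mu_2+\nu_2)t}\right),
\]
using the factorization to cancel $1-(\mu_2+\nu_2)t$ from numerator and denominator. This is exactly the linear term of the Corollary.

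Next I would treat the coefficient of $a_1 a_2$. After setting $c_1=c_2=c$, the two bracketed factors in the Theorem become identical, and each collapses by the same cancellation to $\mu_3 t^2 + \tfrac{ct}{1+(\mu_2+\nu_2)t}$; squaring this reproduces the three summands $\mu_3^2 t^4 + \tfrac{2c\mu_3 t^3}{1+(\mu_2+\nu_2)t} + \tfrac{c^2 t^2}{(1+(\mu_2+\nu_2)t)^2}$ of the Corollary. The remaining two $a_1 a_2$ terms are already in the stated form and carry over unchanged. For the exponent, the $c$-dependent pieces combine over the common denominator as
\[
\frac{-c^2(\mu_2+\nu_2)t^2 + c^2(\mu_2+\nu_2)^2 t^3}{1-(\mu_2+\nu_2)^2t^2} = -\frac{c^2(\mu_2+\nu_2)t^2}{1+(\mu_2+\nu_2)t},
\]
and $c_1 c_2 = c^2$ updates the linear-in-$t$ term to $c^2+\kappa_2-2\mu_2$, while the prefactor denominator $\big(1-2\mu_2 t + (\mu_2^2-\nu_2^2)t^2\big)\sqrt{1-(\mu_2+\nu_2)^2t^2}$ does not involve the $c_r$ and is untouched.

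Assembling these pieces yields precisely the expression claimed in the Corollary. There is no genuine mathematical obstacle here; the only real difficulty is bookkeeping. The numerators must be placed over the common denominator $1-(\mu_2+\nu_2)^2 t^2$ correctly \emph{before} the factor $1-(\mu_2+\nu_2)t$ is cancelled, so I would carry out each of the three cancellations explicitly to guard against sign errors in the $c$-dependent terms.
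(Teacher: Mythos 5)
Your proposal is correct and matches the paper's (implicit) derivation exactly: the corollary follows from the Wigner theorem by setting $c_1=c_2=c$ and cancelling the factor $1-(\mu_2+\nu_2)t$ via $1-(\mu_2+\nu_2)^2t^2 = \bigl(1-(\mu_2+\nu_2)t\bigr)\bigl(1+(\mu_2+\nu_2)t\bigr)$ in the linear terms, the two bracketed factors, and the exponent. All three cancellations you carry out check out, so there is nothing to add.
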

Finally, we generalize this result to random Hermitian matrices where the expected value of each entry is real.
\begin{theorem}[Hermitian matrices]
For random Hermitian matrices, for any distributions of the entries $\{Z_{jj}: j \in [n]\}$ and $\{Z_{jj'}: j < j' \in [n]\}$ with expected value $0$,
\begin{align*}
& H_2(t,a_1,a_2) = \Bigg{[}1+(a_1 + a_2) (\lambda_{21}+\lambda_{12}) \frac{t^2}{2} +\frac{(a_1c_2+a_2c_1) t}{1-\lambda_{11}^2 t^2} - \frac{\lambda_{11}(a_1c_1+a_2c_2) t^2}{1-\lambda_{11}^2t^2} \\
& + a_1 a_2 \Bigg{(}\frac{\lambda _{11} t^2}{1-\lambda _{11} t}+\frac{t}{1-\lambda _{11}^2 t^2}-t+\frac{e^{\left(\lambda_{02}-\lambda_{20}\right) t}-1}{\lambda_{02}-\lambda_{20} e^{\left(\lambda_{02}-\lambda_{20}\right) t}}+ \bigg{(}\left(\lambda_{21}+\lambda_{12}\right) \frac{t^2}{2}\\
& + \frac{c_2 t}{1\!-\!\lambda_{11}^2 t^2} - \,\frac{c_1 \lambda_{11} t^2}{1\!-\!\lambda_{11}^2 t^2} \bigg{)} \bigg{(}\left(\lambda_{21}+\lambda_{12}\right) \frac{t^2}{2}\!+\!\frac{c_1 t}{1\!-\!\lambda_{11}^2 t^2}\!-\!\frac{c_2 \lambda_{11} t^2}{1\!-\!\lambda_{11}^2 t^2}\bigg{)}\Bigg{)}\Bigg{]}\\
& \frac{\left(\lambda_{02}\!-\!\lambda _{20}\right) \exp \!\left(\!
   \left(c_1 c_2\!+\!\kappa_2\!-\!\lambda_{11}\!-\!\lambda_{20}\right)t\!+\!\frac{t^2}{2}\left(\lambda_{22}\!-\!2\lambda_{11}^2 \!-\! \lambda_{02}
   \lambda_{20} \right)\!+\!\frac{c_1 c_2 \lambda_{11}^2 t^3}{1\!-\!\lambda_{11}^2 t^2}\!-\!\frac{t^2}{2}\frac{\left(c_1^2\!+\!c_2^2\right) \lambda_{11}}{1\!-\!\lambda_{11}^2 t^2}\!\right)}{\left(1\!-\!
   \lambda_{11}t\right) \sqrt{1\!-\!\lambda_{11}^2t^2}
   \left(\lambda_{02}\!-\!\lambda_{20} e^{ \left(\lambda_{02}\!-\!\lambda_{20}\right)t}\right)}.
\end{align*}
\end{theorem}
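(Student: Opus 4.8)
The plan is to follow the analytic-combinatorics strategy used in the earlier cases: expand the product of determinants by the Leibniz formula, take the expectation using independence of the entries, reduce to a weighted enumeration of overlaid permutation pairs, and then assemble the EGF through the exponential formula. Writing $M_r = Z + a_r J + c_r Id$, the Leibniz expansion gives
\[
\prod_{r=1}^2 \det(M_r) = \sum_{\sigma_1,\sigma_2 \in S_n} \sgn(\sigma_1)\sgn(\sigma_2) \prod_{j=1}^n (M_1)_{j\sigma_1(j)}(M_2)_{j\sigma_2(j)},
\]
and since $(M_r)_{j\sigma_r(j)} = Z_{j\sigma_r(j)} + a_r + c_r[\sigma_r(j)=j]$, expanding each factor into its $Z$-, $a_r$-, and $c_r$-parts turns each pair $(\sigma_1,\sigma_2)$ into a decorated diagram on the vertex set $[n]$. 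Taking $\Exx$ and using independence, the contribution factors over positions: an off-diagonal pair $\{j,j'\}$ used $p$ times forward (as $Z_{jj'}$) and $q$ times backward (as $\overline{Z_{jj'}}$) contributes $\lambda_{pq}$, while a diagonal vertex whose loop is selected $s$ times contributes $\kappa_s$. Since $\lambda_{10}=\kappa_1=0$, only diagrams in which every used pair or loop carries total multiplicity at least two survive, which is what makes the enumeration finite in structure. This reduces $h_2$ to a signed count of these diagrams weighted by the moments and the constants $a_r,c_r$.

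The first structural simplification I would exploit is that $J=\mathbf{1}\mathbf{1}^\top$ has rank one, so each $\det(Z+a_rJ+c_rId)$ is affine in $a_r$; hence $H_2$ is affine-bilinear in $(a_1,a_2)$, of the form $H^{00}+a_1H^{10}+a_2H^{01}+a_1a_2H^{11}$, which already matches the bracketed factor $[1+(a_1+a_2)(\cdots)+\cdots+a_1a_2(\cdots)]$ after pulling out the common exponential and rational factor. By the symmetry exchanging $(a_1,c_1)\leftrightarrow(a_2,c_2)$, $H^{10}$ and $H^{01}$ are obtained from one another by swapping indices, so it suffices to compute $H^{00}$, one linear coefficient, and $H^{11}$. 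The $a$-free part $H^{00}$ is the EGF of $\Exx[\det(Z+c_1Id)\det(Z+c_2Id)]$, whereas the coefficient of $a_r$ corresponds combinatorially to deleting one edge of $\sigma_r$ and replacing it by a marked open endpoint, turning a closed cycle into a path.

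Next I would classify the connected components of the overlaid diagram and compute the EGF contribution of each type, then multiply via the exponential formula. Two cyclic families dominate: overlaid cycles in which the two permutations traverse each shared off-diagonal pair in opposite directions, so that each edge carries $\lambda_{11}$ (these resum, with the usual factor of $\tfrac12$ for reversal symmetry of the longer cycles, to $\tfrac{1}{(1-\lambda_{11}t)\sqrt{1-\lambda_{11}^2t^2}}$); and overlaid cycles traversed the same way by both permutations, so each pair is used twice forward or twice backward, carrying $\lambda_{20}$ or $\lambda_{02}$ (summing these over all lengths and orientations produces the transcendental factor $\tfrac{\lambda_{02}-\lambda_{20}}{\lambda_{02}-\lambda_{20}e^{(\lambda_{02}-\lambda_{20})t}}$). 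The small atomic components — single $c_r$- or $\kappa$-decorated diagonal vertices, and the two-cycle used by both permutations — contribute the low-order terms $c_1c_2+\kappa_2-\lambda_{11}-\lambda_{20}$ and $\tfrac12(\lambda_{22}-2\lambda_{11}^2-\lambda_{02}\lambda_{20})$ inside the exponential. The $a_1a_2$ coefficient is then the open-path analogue of the same-orientation cycle family, which is exactly why $\tfrac{e^{(\lambda_{02}-\lambda_{20})t}-1}{\lambda_{02}-\lambda_{20}e^{(\lambda_{02}-\lambda_{20})t}}$ appears in the bracket.

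The step I expect to be the main obstacle is the exact enumeration and resummation of the same-orientation cycle family together with its signs: tracking how $\sgn(\sigma_1)\sgn(\sigma_2)$ distributes over cycles of each length and orientation, and verifying that the orientation-weighted sum telescopes to the closed form $\tfrac{\lambda_{02}-\lambda_{20}}{\lambda_{02}-\lambda_{20}e^{(\lambda_{02}-\lambda_{20})t}}$, is a genuinely Hermitian phenomenon with no counterpart in the real symmetric case, where $\lambda_{02}=\lambda_{20}$ collapses it to $\tfrac{1}{1-\lambda_{20}t}$. Correctly coupling the $c_r$-insertions and the $a_r$-marked open paths to these cycles, so that the cross terms $a_rc_{r'}$ and $a_1a_2$ reproduce the bracket precisely, will also demand careful bookkeeping. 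As consistency checks I would confirm that setting $a_1=a_2=0$ collapses the bracket to $1$, that specializing the moments through the Wigner relation of the earlier Remark recovers the preceding Wigner theorem (under which the denominator factors as $(1-(\mu_2+\nu_2)t)(1-(\mu_2-\nu_2)t)\sqrt{1-(\mu_2+\nu_2)^2t^2}$), and that the $\lambda_{02}\to\lambda_{20}$ limit is finite and matches.
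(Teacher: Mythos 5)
Your outline reproduces the paper's own strategy almost component for component: your affine-bilinear decomposition of $H_2$ in $(a_1,a_2)$ is exactly the paper's split $H_2=H_2^0+H_2^1+H_2^2$ by number of marks, your $\lambda_{11}$-weighted opposite-orientation cycles are its anti-parallel loops and necklaces, your $c_r$-decorated path structures are its chains and dumbbells, and your same-orientation cycle family is its parallel loops; the consistency checks you list ($a_1=a_2=0$, the Wigner specialization, the $\lambda_{02}\to\lambda_{20}$ limit) also appear there. So the route is the same, but the one step you explicitly defer is precisely the new technical content of the Hermitian case, and as written your proposal asserts rather than derives the two closed forms $\frac{\lambda_{02}-\lambda_{20}}{\lambda_{02}-\lambda_{20}e^{(\lambda_{02}-\lambda_{20})t}}$ and $\frac{e^{(\lambda_{02}-\lambda_{20})t}-1}{\lambda_{02}-\lambda_{20}e^{(\lambda_{02}-\lambda_{20})t}}$. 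The missing idea is an Eulerian-number enumeration: a parallel loop of length $n$ carries $\lambda_{20}$ on each ascending edge and $\lambda_{02}$ on each descending edge; anchoring the loop at its largest label forces one guaranteed factor $\lambda_{20}\lambda_{02}$ from the two edges incident to the anchor, and the remainder is a linear order of the other $n-1$ labels with exactly $k$ ascents, counted by the Eulerian number $A(n-1,k)$. The single-component EGF is therefore $B(u,t')=u\sum_{n\geq 3}A_{n-1}(u)\frac{{t'}^{n}}{n!}$ with $u=\lambda_{20}/\lambda_{02}$ and $t'=\lambda_{02}t$, and integrating the classical Eulerian EGF $\sum_{n\geq 0}A_n(u)\frac{t^n}{n!}=\frac{u-1}{u-e^{(u-1)t}}$ gives the closed form $B(u,t)=\ln\left(\frac{1-u}{1-ue^{(1-u)t}}\right)-ut-\frac{ut^2}{2}$, whose exponential is the transcendental factor. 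Your worry about signs, incidentally, dissolves: each component consists of two equal-length cycles, so the per-cycle signs $(-1)^{n-1}$ from the two colors square to $+1$, and the only grading is by ascents versus descents.

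One further divergence worth noting: the paper does not obtain the $a_1a_2$ term from a separate open-path enumeration as you propose. It observes that marking an ascending (resp.\ descending) pair of parallel edges replaces one factor $\lambda_{20}$ (resp.\ $\lambda_{02}$) by $a_1a_2$, so the marked-parallel-loop EGF is $a_1a_2\left(\frac{\partial}{\partial\lambda_{20}}+\frac{\partial}{\partial\lambda_{02}}\right)B\!\left(\frac{\lambda_{20}}{\lambda_{02}},\lambda_{02}t\right)$ plus the restored length-$2$ term $a_1a_2\left(\lambda_{20}+\lambda_{02}\right)\frac{t^2}{2}$, which evaluates to $a_1a_2\left(\frac{e^{(\lambda_{02}-\lambda_{20})t}-1}{\lambda_{02}-\lambda_{20}e^{(\lambda_{02}-\lambda_{20})t}}-t\right)$ with no fresh combinatorics. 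A direct path enumeration could be made to work, but it would force you to redo the ascent bookkeeping with two free endpoints; the differentiation trick is what makes the coupling of marks to the Eulerian structure tractable, and without it (or an equivalent lemma) your plan has a genuine gap at its central step.
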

\begin{remark}
The reason that we can adjust the expected value of the entries of $Z$ by $a$ when $a \in \mathbb{R}$ is that $det(M + aJ)$ is linear in $a$ (see Section \ref{sec:markedmultigraphs}). To handle random Hermitian matrices where the expected value of the off-diagonal entries is complex, we would need to analyze $det(M + b\mathbf{i}O)$ where $O$ is the matrix such that $O_{jj'} = -O_{j'j} = 1$ for all $j < j' \in [n]$ and $O_{jj} = 0$ for all $j \in [n]$. Since $det(M + b\mathbf{i}O)$ is nonlinear in $b$, our current techniques do not extend to this setting. 
\end{remark}
\subsection{Organization}
In section \ref{sec:prelim}, we give background on the analytic combinatorics tools we use in this paper. Interested readers can find more detailed information about analytic combinatorics in the book \cite{flajolet2009analytic} by Flajolet and Sedgewick. In Sections \ref{sec:symmetric}, \ref{sec:Wigner}, and \ref{sec:Hermitian}, we prove our results for symmetric matrices, Wigner matrices, and Hermitian matrices respectively.
\section{Preliminaries}\label{sec:prelim}
\subsection{Analytic combinatorics}
We follow the notation from the textbook \emph{Analytic combinatorics} \cite{flajolet2009analytic} by Flajolet and Sedgewick.

Let $\mathcal{A}$ be a set of objects with a given structure where each $\alpha \in \mathcal{A}$ has a size $|\alpha| \in \mathbb{N} \cup \{0\}$ and a weight $w(\alpha) \in \mathbb{C}$ (note that $w(\alpha)$ may be negative or even complex). We call $\mathcal{A}$ a combinatorial strucuture and view it in terms of the structure that its elements satisfy.

We say that $\mathcal{A}$ is labeled if each $\alpha \in \mathcal{A}$ is composed of atoms labeled by $[|\alpha|]= \{1,2,3,4,\ldots, |\alpha|\}$. Moreover, we assume that $\mathcal{A}_n = \{\alpha \in \mathcal{A}: |\alpha| = n \}$ is finite for all $n \geq 0$. We define $a_n = \sum_{\alpha \in \mathcal{A}: |\alpha| = n}{w(\alpha)}$ to be the total weight of the objects in $\mathcal{A}$ with size $n$.

Combinatorial structures can be composed together. One common composition is the \emph{star product}. Note that a tuple $(\alpha, \beta) \in \mathcal{A} \times \mathcal{B}$ cannot represent a labeled object of any structure as the atoms of $\alpha$ and $\beta$ are labeled by $[|\alpha|]$ and $[|\beta|]$, respectively. Relabeling our $\alpha$, $\beta$ as $\alpha'$, $ \beta'$ so that every number from $1$ to $|\alpha|+|\beta|$ appears once, we get a correctly labeled object. There are of course many ways how to relabel the objects. The canonical way is to use the \emph{star product}. We say $(\alpha' ,\beta') \in \alpha \star \beta$ if the new labels in both $\alpha'$ and $\beta'$ increase in the same order as in $\alpha$ and $\beta$ separately. An example is illustrated below.
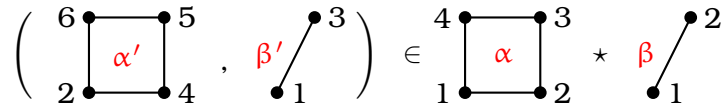
\begin{figure}[htb!]
    \centering
    \begin{tikzpicture}
        \node at (-0.9, 0.5) {$\Bigg{(}$};

        \draw (0, 0) rectangle (1, 1);
        \draw[fill=black] (0,0) circle (2pt);
        \draw[fill=black] (0,1) circle (2pt);
        \draw[fill=black] (1,1) circle (2pt);
        \draw[fill=black] (1,0) circle (2pt);    
        \node at (0.5, 0.5) {\textcolor{red}{$\alpha'$}};
        \node at (-0.3, 1) {6};
        \node at (1.3, 1) {5};
        \node at (-0.3, 0) {2};
        \node at (1.3, 0) {4};

        \node at (1.8, 0.3) {$,$};

        \draw (2.5, 0) -- (3, 1);
        \draw[fill=black] (2.5,0) circle (2pt);
        \draw[fill=black] (3,1) circle (2pt);
        \node at (2.4, 0.5) {\textcolor{red}{$\beta'
$}};
        \node at (3.3, 1) {3};
        \node at (2.8, 0) {1};

        \node at (3.7, 0.5) {$\Bigg{)}$};

        \node at (4.3, 0.5) {$\in$};

        \begin{scope}[shift={(5, 0)}]
            \draw (0, 0) rectangle (1, 1);
            \draw[fill=black] (0,0) circle (2pt);
            \draw[fill=black] (0,1) circle (2pt);
            \draw[fill=black] (1,1) circle (2pt);
            \draw[fill=black] (1,0) circle (2pt);    
            \node at (0.5, 0.5) {\textcolor{red}{$\alpha$}};
            \node at (-0.3, 1) {4};
            \node at (1.3, 1) {3};
            \node at (-0.3, 0) {1};
            \node at (1.3, 0) {2};

            \node at (1.8, 0.5) {$\star$};

            \draw (2.5, 0) -- (3, 1);
            \draw[fill=black] (2.5,0) circle (2pt);
            \draw[fill=black] (3,1) circle (2pt);
            \node at (2.4, 0.5) {\textcolor{red}{$\beta$}};
            \node at (3.3, 1) {2};
            \node at (2.8, 0) {1};
        \end{scope}
    \end{tikzpicture}
    \caption{Star product}
    \label{fig:starprod}
\end{figure}

A key concept for labeled combinatorial structures is their exponential generating function (EGF for short) defined as
\begin{equation}
    A(t) = \sum_{n = 0}^{\infty}{\sum_{\alpha \in \mathcal{A}_n}{\frac{w(\alpha)t^n}{n!}}} = \sum_{n=0}^\infty{{a_n}\frac{t^n}{n!}}.
\end{equation}
These exponential generating functions encode the relationships between combinatorial structures (i.e., how they are composed). We can write the following relations.

\bgroup
\renewcommand{\arraystretch}{1.5}
\begin{table}[tbh!]
    \centering
    \begin{tabular}{|c|c|c|c|}
    \hline
        $\mathcal{C}$ & representation of $\mathcal{C}$ & $w_\mathcal{C}(\gamma), \gamma \in \mathcal{C}$ & $C(t)$ \\
       \hline
       \ru{2} $\mathcal{A} + \mathcal{B}$ & $\mathcal{A}\cup \mathcal{B}$ & $\begin{cases}
    w_\mathcal{A}(\gamma) & \text{if } \gamma \in \mathcal{A}, \\
    w_\mathcal{B}(\gamma) & \text{if } \gamma \in \mathcal{B}
\end{cases}$ & $A(t) + B(t)$\\
       $\mathcal{A} \star \mathcal{B}$ &  $\{\gamma \mid \gamma \in \alpha \star \beta, \alpha \in \mathcal{A}, \beta \in \mathcal{B}\}$ & $w_\mathcal{A}(\alpha)w_\mathcal{B}(\beta)$ & $A(t) B(t)$\\
       $\textsc{Seq}_k(\mathcal{A})$ & $\mathcal{A}^k \overset{\mathrm{def.}}{=} \underbrace{\mathcal{A} \star \mathcal{A} \star \cdots \star \mathcal{A}}_{k}$ & & $A^k(t)$\\
       $\textsc{Set}_k(\mathcal{A})$ & $\frac{1}{k!} \mathcal{A}^k$ & & $\frac{1}{k!} A^k(t)$\\
       $\textsc{Cyc}_k(\mathcal{A})$ & $\frac{1}{k} \mathcal{A}^k$ & & $\frac{1}{k} A^k(t)$\\
       $\textsc{Seq}(\mathcal{A})$ & $\sum_{k=0}^\infty\textsc{Seq}_k(\mathcal{A})$ & & $\frac{1}{1-A(t)}$\\
       $\textsc{Set}(\mathcal{A})$ & $\sum_{k=0}^\infty\textsc{Set}_k(\mathcal{A})$ & & $\exp{(A(t))}$\\
       $\textsc{Cyc}(\mathcal{A})$ & $\sum_{k=1}^\infty\textsc{Cyc}_k(\mathcal{A})$ & & $\ln \left(\frac{1}{1-A(t)}\right)$\\
    \hline
    \end{tabular}
    \caption{Composition of combinatorial structures and the corresponding exponential generating functions}
    \label{tab:EGFancom}
\end{table}
\egroup

The meaning of $\textsc{Seq}_k(\mathcal{A})$, $\textsc{Set}_k(\mathcal{A})$, and $\textsc{Seq}_k(\mathcal{A})$ is as follows.
\begin{itemize}
    \item $\textsc{Seq}_k(\mathcal{A})$ is shorthand for a \emph{sequence} and indeed it can be represented as (relabeled) $k$-tuples of objects taken from $\mathcal{A}$. Note that since everything is relabeled, even though $\alpha_i,\alpha_j$ might be the same for different $i, j$, the corresponding $\alpha_i'$, $\alpha_j'$ are always distinct. Formally, $\textsc{Seq}_k(\mathcal{A}) = \{(\alpha_1',\ldots,\alpha_k') \mid \alpha_i \in \mathcal{A}, i \in [k]\}$, where $(\alpha'_1,\ldots,\alpha'_k) \in \alpha_1 \star \cdots \star \alpha_k$.
    \item $\textsc{Set}_k(\mathcal{A})$ is a structure of \emph{sets} of $k$ relabeled elements, that is, the order of objects $\alpha_i'$ is irrelevant. Formally, $\textsc{Set}_k(\mathcal{A}) = \{\{\alpha_1',\ldots,\alpha_k'\} \mid \alpha_i \in \mathcal{A}, i \in [k]\}$. Alternatively, $\textsc{Set}_k(\mathcal{A})$ can be represented as the structure of classes of $k$-tuples in $\textsc{Seq}_k(\mathcal{A})$ which differ up to some permutation.
    \item $\textsc{Cyc}_k(\mathcal{A})$ represents the structure of classes of $k$-tuples in $\textsc{Seq}_k(\mathcal{A})$ which differ up to some cyclical permutation.
\end{itemize}

For completeness, we briefly explain these results. To see that the exponential generating function for $\mathcal{A} \star \mathcal{B}$ is $A(t) B(t)$, let $a_n$, $b_n$, and $c_n$ be the total weight of the objects of size $n$ in $\mathcal{A}$, $\mathcal{B}$, and $\mathcal{A} \star \mathcal{B}$ respectively. We have that
\[
c_n = \sum_{j=0}^{n}{\binom{n}{j}{a_j}b_{n-j}}
\]
so 
\[
C(t) = \sum_{n=0}^{\infty}{\frac{{c_n}t^n}{n!}} = \sum_{n=0}^{\infty}{\sum_{j=0}^{n}{\frac{{a_j}t^{j}}{j!} \cdot \frac{b_{n-j}t^{n-j}}{(n-j)!}}} = A(t) B(t)
\]
The generating functions for $\textsc{Seq}(\mathcal{A})$, $\textsc{Set}(\mathcal{A})$, and $\textsc{Cyc}(\mathcal{A})$ come from the Taylor series $\frac{1}{1-x} = \sum_{k=0}^{\infty}{x^{k}}$, $e^{x} = \sum_{k=0}^{\infty}{\frac{x^{k}}{k!}}$, and $-\ln(1-x) = \sum_{k=1}^{\infty}{\frac{x^k}{k}}$ respectively.

\begin{example}
Let $\mathcal{D}$ be the combinatorial structure of all derangements (i.e., permutations of $[n]$ where no element is mapped to itself). Any derangement can be decomposed into cycles of length at least two. Attaching a tag $u$ to each cycle so that the weight of each derangement is equal to $u^{\# \text{ of cycles}}$, we get a structure $\mathcal{D}_u$ which can be also constructed as follows:
\begin{equation}
\mathcal{D}_u = \textsc{Set}\left(u\,\textsc{Cyc}_{\geq 2}\left(\,
    \begin{tikzpicture}[baseline=-0.7ex, thick, main/.style = {draw,circle, inner sep = 2pt}, scale = 0.7]
    \node[main] (1) at (0,0) {$1$};
    \end{tikzpicture}
        \,\right)\right).
\end{equation}
In terms of generating functions,
\begin{equation}
    D_u(t) = \exp(-u t - u\ln(1-t)) = \frac{e^{-ut}}{(1-t)^u}.
\end{equation}
\end{example}

\section{Symmetric matrices}\label{sec:symmetric}

In this section, we analyze the second moment of the determinant of symmetric matrices. We start by showing how the problem can be reduced to counting weighted permutation tables and how these permutation tables can be represented using multi-graphs. We then illustrate these techniques by computing the first moment of the determinant of a random symmetric matrix. 

To analyze the second moment, we observe that the determinant of $X = Y + {m_1}J$ is linear in $m_1$. Representing each factor of $m_1$ by a marked edge, this means that it is sufficient to consider marked multi-graphs which have at most two marked edges. All such marked multi-graphs can be split into components of a few different types which allows us to determine the second moment of the determinant of a random symmetric matrix for arbitrary $m_1$.
\subsection{Permutation tables}
By definition, we have
\begin{equation}
f_k(n) = \Exx \left[\prod_{j=1}^k \sum_{\pi_j \in S_n} \sgn (\pi_j) \prod_{i=1}^n X_{i \pi_j(i)}\right] = \sum_{t \in F_{k,n}} w(t) \sgn (t),
\end{equation}
where 
\begin{enumerate}
\item $F_{k,n}$ is the set of $k$ by $n$ tables whose rows are permutations $\pi_j \in S_n$ (where $S_n$ is the set of all permutations of $[n]$). 
\item $\sgn (t) = \prod_{j=1}^{k}{\sgn(\pi_j)}$. In other words, the sign of $t$ is the product of the signs of the permutations corresponding to the rows of $t$. 
\item The weight $w(t)$ of $t$ is $w(t) = \Exx \left[\prod_{j=1}^{k}{{\prod_{i=1}^n X_{i \pi_j(i)}}}\right]$. In other words, $w(t)$ is the expected value of the product of the entries of $X$ corresponding to the entries of $t$.
\end{enumerate}
Note that, unlike in the asymmetric case, the weight $w(t)$ cannot be expressed simply as the product of the weights of the individual columns of $t$. Consider the example in Figure \ref{fig:tabsymminim} -- to keep track of the first indices of the variables $X_{ij}$, we have added the identity permutation as the \emph{zeroth} row as a reference.
\begin{figure}[H]
\centering
    \begin{tabular}{|c|c|c|c|c|c|c|c|c|}
    \hline
        1 & 2 & 3 & 4 & 5 & 6 & 7 & 8 & 9 \\
    \hdashline
        4 & 9 & 1 & 7 & 5 & 6 & 8 & 3 & 2 \\
        1 & 9 & 8 & 7 & 6 & 3 & 4 & 5 & 2 \\
    \hline
    \end{tabular}
\caption{An example of a table $t \in  F_{2,9}$ with weight $w(t) = m_1^9 m_2 m_3 m_4$}
\label{fig:tabsymminim}
\end{figure}
\noindent
Ignoring the sign, the corresponding term is 
\begin{equation}
X_{14}X_{29}X_{31}X_{47}X_{55}X_{66}X_{78}X_{83}X_{92}X_{11}X_{29}X_{38}X_{47}X_{56}X_{63}X_{74}X_{85}X_{92}.
\end{equation}
However, by symmetry, this is equal to
\begin{equation}
X_{11}X_{13}X_{14}X_{29}^4X_{36}X_{38}^2X_{47}^3X_{55}X_{56}X_{58}X_{66}X_{78}.
\end{equation}
Thus, we have that $w(t) = m_1^9 m_2 m_3 m_4$.

\subsection{Multi-graphs}\label{sec:multigraphs}
As observed by Lv in his Master's thesis \cite{zelin2023moments}, we can represent tables $t \in F_{k,n}$ by multi-graphs with colored edges. Using this observation, Lv \cite{zelin2023moments} rederived $F_2(t)$ for the case where $m_1=0$.

\begin{definition}
Let $\mathcal{S}_n$ be the set of all directed graphs on $n$ vertices (where loops are allowed) where each vertex has indegree and outdegree $1$. In other words, $\mathcal{S}_n$ is the set of graphs on $n$ vertices which are disjoint unions of cycles. Given a permutation $\pi \in S_n$, we define the corresponding graph $G_{\pi} \in \mathcal{S}_n$ to be the graph with vertices $V(G_{\pi}) = [n]$ and edges $E(G_{\pi}) = \{(i,\pi(i)): i \in [n]\}$.
\end{definition}
\begin{definition}
We define $\mathcal{F}_{k,n}$ to be the set of all $k$-tuples of graphs in $\mathcal{S}_n$. Given $t \in F_{k,n}$, we take the corresponding element $\hat{t} \in \mathcal{F}_{k,n}$ to be $\hat{t} = (G_{\pi_1},\ldots,G_{\pi_k})$ where $\pi_1,\ldots,\pi_k$ are the permutations corresponding to the rows of $t$. Visually, we represent $\hat{t}$ as a multi-graph with edges $E(\hat{t}) = \cup_{j=1}^{k}{E(G_{\pi_j})}$ where the edges in $E(G_{\pi_j})$ have color $j$. Since we only consider $k = 1$ and $k = 2$ in this paper, we represent the edges in $G_{\pi_1}$ with solid lines and we represent the edges in $G_{\pi_2}$ with dashed lines.
\end{definition}
It is not hard to check that there is a bijection between $F_{k,n}$ and $\mathcal{F}_{k,n}$. This bijection allows us to work with $\mathcal{F}_{k,n}$ rather than $F_{k,n}$.
\begin{definition}
Given $t \in F_{k,n}$,
\begin{enumerate}
\item We define $\sgn(\hat{t}) = \sgn(t)$. Note that 
\[
\sgn(\hat{t}) = \prod_{j=1}^{k}{(-1)^{C_E(\pi_j)}} = \prod_{j=1}^{k}{(-1)^{n - C(\pi_j)}} = (-1)^{kn - C(\hat{t})}
\]
where $C_{E}(\pi_j)$ is the number of cycles of $\pi_j$ of even length, $C(\pi_j)$ is the total number of cycles of $\pi_j$, and $C(\hat{t})$ is the total number of cycles in $\hat{t}$ such that all edges have the same color.
\item We define $w(\hat{t}) = w(t) = \Exx\left[\prod_{(i,j) \in E(\hat{t})}{X_{ij}}\right]$. Equivalently, $w(\hat{t})$ is the product of $m_{e_{ij}}$ over all pairs of vertices $i \geq j \in [n]$ where $e_{ij}$ is the number of edges between $i$ and $j$ (in both directions).
\end{enumerate}
\end{definition}
\begin{example}
The multi-graph $\hat{t}$ for the table $t$ from Figure \ref{fig:tabsymminim} is as follows.
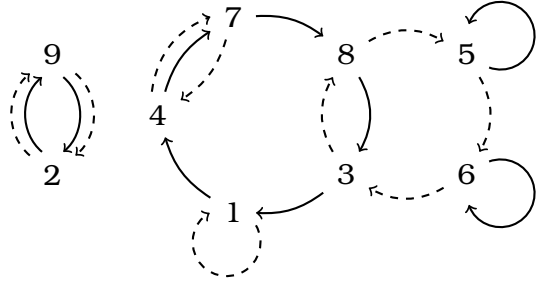
\begin{figure}[htb!]
\centering
\begin{tikzpicture}[scale = 1.0]
\node[vertex] (1) at (2.1,-1.3) {$1$};
\node[vertex] (4) at (1.1,0) {$4$}; 
\node[vertex] (7) at (2.1,1.3) {$7$};
\node[vertex] (8) at (3.6,0.8) {$8$};
\node[vertex] (3) at (3.6,-0.8) {$3$};
\node[vertex] (9) at (-0.3,0.8) {$9$};
\node[vertex] (2) at (-0.3,-0.8) {$2$};
\node[vertex] (5) at (5.2,0.8) {$5$};
\node[vertex] (6) at (5.2,-0.8) {$6$};
\draw[->,dashed] (9) to [bend left=50] (2);
\draw[->] (9.-65) to [bend left=50] (2.65);
\draw[->,dashed] (2) to [bend left=50] (9);
\draw[->] (2.115) to [bend left=50] (9.-115);
\draw[->] (4) to [bend right=-20] (7);
\draw[->] (7) to [bend right=-20] (8);
\draw[->] (8) to [bend right=-30] (3);
\draw[->] (3) to [bend right=-20] (1);
\draw[->] (1) to [bend right=-20] (4);
\draw[->,dashed] (7) to [bend right=-20] (4);
\draw[->,dashed] (4.105) to [bend right=-30] (7.180);
\draw[->,dashed] (8) to [bend right=-30] (5);
\draw[->,dashed] (5) to [bend right=-30] (6);
\draw[->,dashed] (6) to [bend right=-30] (3);
\draw[->,dashed] (3) to [bend right=-30] (8);
\draw[->] (5.-30) arc (-110:160:0.45);
\draw[->] (6.30) arc (110:-160:0.45);
\draw[->,dashed] (1.-30) arc (30:-240:0.45);
\end{tikzpicture}
\caption{A multi-graph $\hat{t} \in \mathcal{F}_{2,9}$ with weight $w(\hat{t}) = m_1^9 m_2 m_3 m_4$} \label{fig:multiexample}
\end{figure}
\end{example}
\begin{proposition} For any distribution of $X_{ij}$, we have
\begin{equation}
f_k(n) = \sum_{\hat{t} \in \mathcal{F}_{k,n}} \sgn(\hat{t})w(\hat{t}).
\end{equation}
and 
\begin{equation}
F_k(t) = \sum_{n=0}^{\infty}{\sum_{\hat{t} \in \mathcal{F}_{k,n}}{\frac{\sgn(\hat{t})w(\hat{t}){t^n}}{n!}}}.
\end{equation}
\end{proposition}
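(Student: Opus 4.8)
The plan is to observe that the first displayed identity is essentially a reindexing of the expansion of $f_k(n)$ already recorded in the Permutation tables subsection, transported through the bijection $t \mapsto \hat{t}$ between $F_{k,n}$ and $\mathcal{F}_{k,n}$.

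First I would recall the Leibniz expansion of the determinant: writing $\det(X) = \sum_{\pi \in S_n} \sgn(\pi)\prod_{i=1}^n X_{i\pi(i)}$ and raising it to the $k$th power, the product $\prod_{j=1}^k \det(X)$ becomes a sum over ordered $k$-tuples $(\pi_1,\dots,\pi_k)$ of permutations, which is precisely the index set $F_{k,n}$. Pulling the expectation inside the finite sum by linearity, each table $t$ contributes $\sgn(t)\,\Exx\left[\prod_{j=1}^k\prod_{i=1}^n X_{i\pi_j(i)}\right] = \sgn(t)\,w(t)$, which is the identity $f_k(n) = \sum_{t\in F_{k,n}} \sgn(t)\,w(t)$ stated earlier. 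Here the only subtlety is that, because $X$ is symmetric, $w(t)$ is genuinely a single expectation over all $kn$ chosen entries rather than a product over columns; but this is exactly how $w(t)$ was defined, so nothing further is needed.

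Next I would invoke the bijection $F_{k,n}\to\mathcal{F}_{k,n}$, $t\mapsto\hat{t}$, noted immediately after the definition of $\mathcal{F}_{k,n}$. Since this map is a bijection and the graph statistics were defined by $\sgn(\hat{t})=\sgn(t)$ and $w(\hat{t})=w(t)$, summing any quantity over $t\in F_{k,n}$ equals summing the corresponding quantity over $\hat{t}\in\mathcal{F}_{k,n}$. Reindexing the sum in this way yields the first displayed equation. The second displayed equation is then immediate: substituting $f_k(n)=\sum_{\hat{t}\in\mathcal{F}_{k,n}}\sgn(\hat{t})w(\hat{t})$ into the definition $F_k(t)=\sum_{n=0}^\infty f_k(n)\frac{t^n}{n!}$ gives the claimed double sum.

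Since every ingredient is already in place, there is no substantive obstacle; this proposition is a bookkeeping statement that packages the permutation-table expansion in graph language. The one point deserving a line of care is the sign formula $\sgn(\hat{t})=(-1)^{kn-C(\hat{t})}$: each $\sgn(\pi_j)$ equals $(-1)^{n-C(\pi_j)}$, where $C(\pi_j)$ counts the cycles of $\pi_j$, and since the monochromatic cycles of $\hat{t}$ are exactly the cycles of the individual $\pi_j$, their total count is $C(\hat{t})=\sum_{j=1}^k C(\pi_j)$, whence $\sgn(\hat{t})=\prod_{j=1}^k(-1)^{n-C(\pi_j)}=(-1)^{kn-C(\hat{t})}$. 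This is already recorded in the preceding definition and need only be cited.
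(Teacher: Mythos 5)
Your proof is correct and matches the paper's (implicit) argument: the paper states this proposition without proof precisely because it follows, as you show, from the permutation-table identity $f_k(n)=\sum_{t\in F_{k,n}}\sgn(t)\,w(t)$ together with the bijection $t\mapsto\hat{t}$ and the definitions $\sgn(\hat{t})=\sgn(t)$, $w(\hat{t})=w(t)$, after which the EGF formula is mere substitution. Your extra sanity check of the sign formula $\sgn(\hat{t})=(-1)^{kn-C(\hat{t})}$ is consistent with the paper's preceding definition.
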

To analyze the exponential generating function $F_k(t)$, it is convenient to consider $\mathcal{F}_{k,n}$ for all $n$ simultaneously so we make the following definition. 
\begin{definition}
We define $\mathcal{F}_{k} = \bigcup_{n=0}^{\infty}{\mathcal{F}_{k,n}}$.
\end{definition}
\subsubsection{The first moment of the determinant of a random symmetric matrix}
As an example, we find $F_1(t)$ (and $f_1(n)$). Here, the structure of all graphs in $\mathcal{F}_{1}$ is simple (a permutation graph consists of disjoint cycles). See Figure \ref{fig:multi1example} for an example of such a graph.
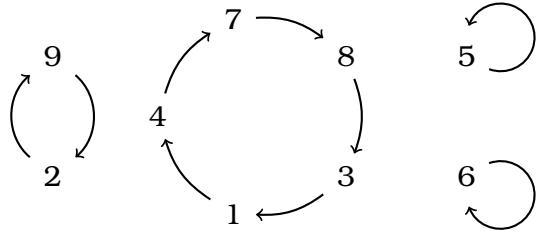
\begin{figure}[htb!]
\centering
\begin{tikzpicture}[scale = 1.0]
\node[vertex] (1) at (2.1,-1.3) {$1$};
\node[vertex] (4) at (1.1,0) {$4$}; 
\node[vertex] (7) at (2.1,1.3) {$7$};
\node[vertex] (8) at (3.6,0.8) {$8$};
\node[vertex] (3) at (3.6,-0.8) {$3$};
\node[vertex] (9) at (-0.3,0.8) {$9$};
\node[vertex] (2) at (-0.3,-0.8) {$2$};
\node[vertex] (5) at (5.2,0.8) {$5$};
\node[vertex] (6) at (5.2,-0.8) {$6$};
\draw[->] (9) to [bend right=-50] (2);
\draw[->] (2) to [bend left=50] (9);
\draw[->] (4) to [bend right=-20] (7);
\draw[->] (7) to [bend right=-20] (8);
\draw[->] (8) to [bend right=-20] (3);
\draw[->] (3) to [bend right=-20] (1);
\draw[->] (1) to [bend right=-20] (4);
\draw[->] (5.-30) arc (-110:160:0.45);
\draw[->] (6.30) arc (110:-160:0.45);
\end{tikzpicture}
\caption{A graph $\hat{t} \in \mathcal{F}_{1,9}$ with weight $w(\hat{t}) = m_1^7 m_2$} \label{fig:multi1example}
\end{figure}

\noindent
We analyze $F_1(t)$ by decomposing the graphs in $\mathcal{F}_1$ into a few different types of structures. To each structure, we assign its exponential generating function (EGF). The overall $F_1(t)$ (which is also an EGF) is then found by multiplying all of these partial EGF's. For $F_1(t)$, there are three types of structures:
\begin{itemize}
    \item Fixed points (cycles of length one) with weight $W = {m_1}t$.
    \item Mussels (cycles of length two) with weight $W = -{m_2}t^2$.
    \item Loops (cycles of length at least $3$) where a loop of length $l$ has weight $W = -m_1^l(-t)^l$.
\end{itemize}
\begin{remark}
We use $W$ rather than $w$ for these weights as we are including the factors of $t$ and the signs in these weights.
\end{remark}
\begin{remark}
Note that in order to verify the results by computer for small matrices, it is better 
to compute the logarithm of the EGF tather than the EGF as this makes the contribution of each structure additive rather than multiplicative. This allows us to heuristically identify missing structures we have not taken into account yet.
\end{remark}
\paragraph{Fixed points} Let $\mathcal{X}_1$ be the set of fixed points in $\mathcal{F}_1$. Then
\begin{equation}
    \mathcal{X}_1 = \textsc{Set} \bigg{(}\!\!\!
\begin{tikzpicture}[baseline=-0.4ex, scale = 0.8]
\node[vertex] (1) at (0,0) {$1$};
\draw[->] (1.-30) arc (-110:160:0.45);
\end{tikzpicture}
\bigg{)}.
\end{equation}
For the EGF, this gives a factor of $\exp(m_1 t)$.

\paragraph{Mussels} Next, let $\mathcal{M}_1$ be the set of mussels in $\mathcal{F}_1$. Then
\begin{equation}
    \mathcal{M}_1 = \textsc{Set}\Bigg{(}
\begin{tikzpicture}[baseline=-0.65ex, scale = 0.8]
\node[vertex] (2) at (0,0.8) {$2$};
\node[vertex] (1) at (0,-0.8) {$1$};
\draw[->] (2) to [bend left=50] (1);
\draw[->] (1) to [bend left=50] (2);
\end{tikzpicture}
\Bigg{)}.
\end{equation}
The EGF of the inner factor is $-m_2 t^2/2!$ so the EGF of $\mathcal{M}_1$ is $\exp(-\tfrac12 m_2 t^2)$.

\paragraph{Loops}
Finally, let $\mathcal{L}_1$ be the set of loops in $\mathcal{F}_1$. Then
\begin{equation}
    \mathcal{L}_1 = \textsc{Set} \bigg{(} \textsc{Cyc}_{\geq 3} \bigg{(}
\begin{tikzpicture}[baseline=0.6ex, scale = 0.8]
\node[vertex] (1) at (0,0) {$1$};
\node[vertex] (2) at (1.3,1) {};
\draw[->] (1) to [bend left=50] (2);
\end{tikzpicture}\!\!\!\!\!\!
\bigg{)}\bigg{)}.
\end{equation}
The EGF for cycles of length $n \geq 3$ is $-m_1t + \frac{m_1^2 t^2}{2} - \ln\left(\frac{1}{1+m_1 t}\right)$ so the EGF of $\mathcal{L}_1$ is equal to 
\begin{equation*}
\exp\left(-m_1t + \frac{m_1^2 t^2}{2} - \ln\left(\frac{1}{1+m_1 t}\right)\right) = (1+m_1 t) e^{-m_1 t + \frac12 m_1^2 t^2}.
\end{equation*}

\paragraph{Conclusion}
Since $\mathcal{F}_1 = \mathcal{X}_1\star\mathcal{M}_1\star\mathcal{L}_1$, by multiplying the EGFs of the structures we obtain that
\begin{equation}\label{Eq:F1}
F_1(t) = e^{m_1 t} e^{-\frac12 m_2 t^2} (1+m_1 t) e^{-m_1 t + \frac12 m_1^2 t^2} = (1+m_1 t) e^{\frac12 (m_1^2-m_2) t^2}.
\end{equation}
\subsection{Techniques for handling nonzero \texorpdfstring{$m_1$}{m1}}\label{sec:markedmultigraphs}
To analyze the second moment of the determinant of $X$ when $m_1 \neq 0$, it is more convenient to write $X = Y + {m_1}J$ and carry out our analysis in terms of the entries of $Y$.
\begin{definition}
We take $Y = X - {m_1}J$ so we have that $Y_{ij} = X_{ij} - m_1$. We define $\mu_q = \Exx [Y_{ij}^q]$ to be the $q$th moment of the entries of $Y$. Note that $\mu_1 = 0$ and $\mu_2 = {m_2} - {m_1}^2$.
\end{definition}
\begin{proposition}
\label{MainProp}\label{prop:MatLem}
\begin{equation}
\det X = \!\!\!\sum_{\pi \in F_n} \sgn(\pi) \prod_{i \in \left[n\right]} Y_{i\pi(i)} \, +\, m_1\sum_{j \in [n]} \sum_{\pi \in F_n} \prod_{i \in \left[n\right]\setminus \{j\}} Y_{i \pi(i)}.
\end{equation}
\end{proposition}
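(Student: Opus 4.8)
The plan is to combine multilinearity of the determinant with the rank-one structure of $J$. Write $X = Y + m_1 J$, so that the $i$-th row of $X$ is $r_i + m_1\mathbf{1}^\top$, where $r_i$ is the $i$-th row of $Y$ and $\mathbf{1}^\top=(1,\dots,1)$ is the all-ones row. Since the determinant is linear in each row separately, expanding every row into its two summands gives
\begin{equation*}
\det X = \sum_{S\subseteq[n]} m_1^{|S|}\,\det\big(Y^{(S)}\big),
\end{equation*}
where $Y^{(S)}$ is the matrix whose $i$-th row equals $\mathbf{1}^\top$ when $i\in S$ and equals $r_i$ otherwise (the factor $m_1^{|S|}$ having been pulled out of the rows indexed by $S$).

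The crux of the argument is that every term with $|S|\ge 2$ vanishes: if $S$ contains two distinct indices, then $Y^{(S)}$ has two identical rows, both equal to $\mathbf{1}^\top$, so $\det(Y^{(S)})=0$. This is exactly the statement that $\det(Y+m_1J)$ is a polynomial of degree at most one in $m_1$, which is forced by $J=\mathbf{1}\mathbf{1}^\top$ having rank one; one could equivalently invoke the matrix determinant lemma $\det(Y+m_1\mathbf{1}\mathbf{1}^\top)=\det Y + m_1\,\mathbf{1}^\top\operatorname{adj}(Y)\mathbf{1}$. Hence only $S=\varnothing$ and the singletons $S=\{j\}$ contribute.

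It then remains to read off the two surviving contributions. For $S=\varnothing$ we obtain $\det Y=\sum_{\pi\in F_n}\sgn(\pi)\prod_{i\in[n]}Y_{i\pi(i)}$, the first term. For $S=\{j\}$ we obtain $m_1\det(Y^{(\{j\})})$, where $Y^{(\{j\})}$ is $Y$ with its $j$-th row replaced by $\mathbf{1}^\top$; applying the Leibniz expansion and using that the row-$j$ entry equals $1$ for every permutation collapses each product to $\prod_{i\in[n]\setminus\{j\}}Y_{i\pi(i)}$, yielding the summand $\sum_{\pi\in F_n}\sgn(\pi)\prod_{i\in[n]\setminus\{j\}}Y_{i\pi(i)}$. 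Summing over $j\in[n]$ produces the second term. The step that carries all of the content is the rank-one vanishing in the middle; the remaining work is the routine bookkeeping of which permutations and signs survive once the $j$-th factor is dropped, which the Leibniz formula supplies directly.
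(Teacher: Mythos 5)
Your proof is correct, and it takes a genuinely different (if closely related) route from the paper's. The paper stays inside the Leibniz expansion: it writes $\det X = \sum_{\pi \in S_n}\sgn(\pi)\sum_{A \subseteq [n]} m_1^{|A|}\prod_{i \in [n]\setminus A} Y_{i\pi(i)}$ and kills every term with $|A|\ge 2$ by an explicit sign-reversing involution, pairing each $\pi$ with $\operatorname{swap}_{\pi}$, which exchanges the values of $\pi$ at the first two elements of $A$. You instead use multilinearity of the determinant in the rows together with the rank-one structure of $J$: after expanding, any $S$ with $|S|\ge 2$ produces a matrix with two identical all-ones rows, hence determinant zero. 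These are really two levels of the same cancellation---the equal-rows vanishing is classically proved by exactly the transposition pairing the paper writes out---but your packaging is shorter and makes the degree-at-most-one dependence on $m_1$ conceptually transparent; as you note, it is the Matrix Determinant Lemma, which the paper itself acknowledges in the remark following its proof. What the paper's explicit involution buys is motivation: the paper says it chose the direct argument because the positions in $A$ foreshadow the marked permutations $S_n^\times$ defined immediately afterwards, a thread your argument does not set up. One bookkeeping point in your favor: your computation of the $S=\{j\}$ term correctly retains the sign, giving $m_1\sum_{j}\sum_{\pi}\sgn(\pi)\prod_{i\ne j}Y_{i\pi(i)}$, whereas the proposition as printed omits $\sgn(\pi)$ from the second sum (and writes $F_n$ where $S_n$ is meant); this is evidently a typo in the paper, since its own restatement via marked permutations, $\det X=\sum_{\sigma\in S_n^\times}\sgn(\sigma)\prod_{i} Y^\times_{i\sigma(i)}$, includes the sign.
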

\begin{proof}
Using the fact that $\det X = \sum_{\pi \in S_n}  \sgn(\pi) \prod_{i \in \left[n\right]} X_{i \pi(i)}$ and multiplying everything out,
\begin{equation*}
\det X \! =\! \sum_{\pi \in F_n}  \sgn (\pi) \prod_{i \in \left[n\right]} X_{i \pi(i)} \!=\! \sum_{\pi \in S_n}  \sgn (\pi) \prod_{i \in \left[n\right]} (Y_{i \pi(i)}+m_1) \!=\! \sum_{\pi \in S_n}  \sgn (\pi)\sum_{A \subseteq \left[n\right]} \prod_{i \in \left[n\right]\setminus A} m_1^{|A|} Y_{i \pi(i)},
\end{equation*}
We claim that the terms with $|A| \geq 2$ vanish. To see this, given an $A$ such that $|A| \geq 2$, let $a_1$ and $a_2$ be the first two elements of $A$. For each permutation $\pi \in S_n$, let $\operatorname{swap}_{\pi}$ be the permutation such that $\operatorname{swap}_{\pi}(a_1) = \pi(a_2)$, $\pi(a_2) = \operatorname{swap}_{\pi}(a_1)$, and $\operatorname{swap}_{\pi}(i) = \pi(i)$ for all $i \in [n] \setminus \{a_1,a_2\}$. Since $\sgn(\operatorname{swap}_{\pi}) = -\sgn(\pi)$ for all $\pi \in S_n$ and $\operatorname{swap}_{\pi}(i) = \pi(i)$ for all $i \in [n] \setminus A$, we have that 
\begin{equation*}
\sum_{\pi \in S_n: \pi(a_1) < \pi(a_2)}{\left(\sgn (\pi) \left(\prod_{i \in \left[n\right]\setminus A} {m_1^{|A|} Y_{i \pi(i)}}\right) + \sgn (\operatorname{swap}_{\pi}) \left(\prod_{i \in \left[n\right]\setminus A} {m_1^{|A|} Y_{i \operatorname{swap}_{\pi}(i)}}\right)\right)} = 0.
\end{equation*}
\end{proof}
\begin{remark}
Proposition \ref{prop:MatLem} is a special case of the Matrix Determinant Lemma which says that $\det(A + uv^T) = (1 + v^TA^{-1}u)\det(A)$. We gave an alternative direct proof as it helps motivate the definition of marked permutations below.
\end{remark}
\begin{definition}
We say $\sigma$ is a \emph{marked permutation} if it is either in $S_n$ or it is formed from some $\pi \in S_n$ by setting $\sigma(i) = \times$ for some $i \in [n]$ and $\sigma(j) = \pi(j)$ for all $j \in [n] \setminus \{i\}$. In this case, we think of $\times$ as a mark and we call $i$ the marked element of $\sigma$. 

We define $\sgn(\sigma) = \sgn (\pi)$ and we take $Y^\times_{i\sigma(i)} = m_1$ if $i$ is marked and $Y^\times_{i\sigma(i)} = Y_{i\pi(i)}$ otherwise. We write $S^\times_n$ for the set of all marked permutations.
\end{definition}
Restating Proposition \ref{MainProp} in terms of marked permutations, we have the following equation.
\begin{proposition} 
\begin{equation*}
\det X = \sum_{\sigma \in S_n^\times} \sgn(\sigma) \prod_{i=1}^n Y_{i \sigma(i)}^\times.
\end{equation*}
\end{proposition}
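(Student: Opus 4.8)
The plan is to obtain this identity as an immediate regrouping of the two sums appearing in Proposition \ref{MainProp}. The right-hand side $\sum_{\sigma \in S_n^\times} \sgn(\sigma) \prod_{i=1}^n Y_{i\sigma(i)}^\times$ is a single sum over the set $S_n^\times$ of marked permutations, and the whole point of introducing marked permutations is that $S_n^\times$ partitions into the \emph{unmarked} permutations $\sigma \in S_n$ and the permutations carrying exactly one mark, so that these two classes correspond term-by-term to the first and second summands of Proposition \ref{MainProp}. So I would first split the sum over $S_n^\times$ according to whether $\sigma$ is marked, and then match each piece.

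For the unmarked part, $\sigma = \pi \in S_n$, and by the definition of the marked weights no index is marked, so $Y^\times_{i\sigma(i)} = Y_{i\pi(i)}$ for every $i$ and $\sgn(\sigma) = \sgn(\pi)$; hence this part equals $\sum_{\pi \in S_n} \sgn(\pi) \prod_{i} Y_{i\pi(i)}$, which is exactly the first summand of Proposition \ref{MainProp}. For the marked part I would fix the marked element $j$ and set up the correspondence $\pi \mapsto \sigma$ sending $\pi \in S_n$ to the marked permutation with $\sigma(j) = \times$ and $\sigma(i) = \pi(i)$ for $i \neq j$. Under this correspondence $\sgn(\sigma) = \sgn(\pi)$ by definition, and since the marked index $j$ contributes the factor $m_1$ we get $\prod_{i=1}^n Y^\times_{i\sigma(i)} = m_1 \prod_{i \in [n]\setminus\{j\}} Y_{i\pi(i)}$. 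Summing over $j \in [n]$ and over all marked permutations then reproduces $m_1 \sum_{j \in [n]} \sum_{\pi \in S_n} \sgn(\pi) \prod_{i \in [n]\setminus\{j\}} Y_{i\pi(i)}$, matching the second summand of Proposition \ref{MainProp}. Adding the two contributions and invoking Proposition \ref{MainProp} yields the claim.

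The only genuine content, and the step I would write out carefully, is checking that the correspondence $\pi \mapsto \sigma$ is a bijection between $S_n$ and the marked permutations with marked element $j$, so that $\sgn(\sigma)$ and the weights $Y^\times_{i\sigma(i)}$ are well-defined on $S_n^\times$. The key observation is that a marked permutation with mark at $j$ records the $n-1$ distinct values $\sigma(i)$ for $i \neq j$, which omit exactly one element of $[n]$; assigning $\pi(j)$ to be that omitted value recovers the underlying $\pi$ uniquely, which gives both well-definedness and injectivity (two permutations agreeing off $j$ must agree at $j$). Beyond this bookkeeping there is no real obstacle: the statement is essentially a definitional reformulation of Proposition \ref{MainProp}, with no additional analytic or combinatorial difficulty.
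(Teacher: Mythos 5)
Your proof is correct and takes essentially the same approach as the paper, which states this proposition without proof as a direct ``restatement'' of Proposition \ref{MainProp}: your splitting of $S_n^\times$ into unmarked permutations and once-marked permutations, together with the bijection $\pi \mapsto \sigma$ for each marked index $j$, is exactly the regrouping that restatement presupposes. One incidental point in your favor: your matched second summand correctly carries the factor $\sgn(\pi)$, which is missing (evidently a typo) from the displayed statement of Proposition \ref{MainProp} in the paper, as its own proof via the vanishing of the $|A| \geq 2$ terms shows.
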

We can easily modify tables and multi-graphs to take these marks into account.
\begin{definition}\label{Def:GfromF}
We define $G^{\times}_{k,n}$ to be the set of marked tables formed from $F_{k,n}$ by marking at most one element in each row. Note that $G^{\times}_{k,n}$ includes the tables where no elements are marked.

Given a table $t \in G^{\times}_{k,n}$ we define its weight to be $w(t) = \Exx \left[\prod_{j=1}^k \prod_{i=1}^n Y^{\times}_{i\sigma_j(i)}\right]$ where $\sigma_j$ is the marked permutation corresponding to row $j$ of $t$ (i.e., $\sigma_j(i)$ is the element in row $j$ and column $i$ of $t$ if this element is unmarked and $\times$ if this element is marked.).
\end{definition}
\begin{definition}
Given a marked table $t \in G^\times_{k,n}$, we construct a marked multi-graph $\hat{t}$ by first taking the multi-graph corresponding to $t$ when we ignore the marks. We then mark the edges (if any) corresponding to marked elements in $t$. Note that at most one edge of each color is marked.

As before, we define $w(\hat{t}) = w(t)$ and $\sgn (\hat{t}) = \sgn (t)$. We define $\mathcal{G}^\times_{k,n}$ to be the set of all marked multi-graphs corresponding to a marked table $t \in G^\times_{k,n}$.
\end{definition}
\begin{definition}
Given $t \in G^{\times}_{k,n}$, we say that $t$ and $\hat{t}$ are trivial if there is an $i \in [n]$ such that there is exactly one unmarked loop incident to $i$ in $\hat{t}$ or there exist $i,j \in [n]$ such that there is exactly one unmarked edge (in either direction) between $i$ and $j$ in $\hat{t}$. Equivalently, $t$ and $\hat{t}$ are  trivial if there exists an $i \in [n]$ such that $i$ appears exactly once in column $i$ of $t$ or there exist $i,j \in [n]$ such that the number of times $j$ appears in column $i$ of $t$ plus the number of times $i$ appears in column $j$ of $t$ is exactly $1$.

Otherwise, we say that $t$ and $\hat{t}$ are non-trivial.
\end{definition}
We observe that trivial tables/multi-graphs have weight $0$ so it is sufficient to restrict our attention to non-trivial tables/multi-graphs.
\begin{proposition}
If $t$ and $\hat{t}$ are trivial then $w(t) = w(\hat{t}) = 0$.
\end{proposition}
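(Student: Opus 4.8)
The plan is to exploit the independence of the entries $\{Y_{ij} : i \le j \in [n]\}$ together with the vanishing first moment $\mu_1 = 0$. First I would separate the deterministic marked edges from the random unmarked ones: in the product $\prod_{j=1}^k \prod_{i=1}^n Y^\times_{i\sigma_j(i)}$, each marked edge contributes a fixed factor $m_1$, which pulls out of the expectation. Thus $w(\hat t)$ equals $m_1^{M}$ times $\Exx\left[\prod_{(i,i') \in E^u(\hat t)} Y_{ii'}\right]$, where $M$ is the total number of marks and $E^u(\hat t)$ denotes the multiset of unmarked edges of $\hat t$.

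Next I would use the symmetry $Y_{ij} = Y_{ji}$ and the independence of the entries to factor this remaining expectation over vertices and unordered pairs. For each vertex $i$, the unmarked loops at $i$ correspond to copies of the single random variable $Y_{ii}$, and for each unordered pair $\{i,j\}$ with $i \ne j$, the unmarked edges between $i$ and $j$ in either direction all correspond to the single random variable $Y_{ij}$. Since these variables are mutually independent, the expectation of the product is the product of the expectations, giving $\Exx\left[\prod_{(i,i') \in E^u(\hat t)} Y_{ii'}\right] = \prod_{i} \mu_{e_{ii}} \prod_{\{i,j\}} \mu_{e_{ij}}$, where $e_{ii}$ is the number of unmarked loops at $i$ and $e_{ij}$ is the number of unmarked edges between $i$ and $j$.

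Finally, I would invoke the definition of triviality. If $\hat t$ is trivial, then by definition either some vertex $i$ has exactly one unmarked loop, so $e_{ii} = 1$, or some pair $\{i,j\}$ has exactly one unmarked edge, so $e_{ij} = 1$. In either case the corresponding factor in the product above is $\mu_1 = \Exx[Y_{ij}] = 0$, so the entire product vanishes, and therefore $w(\hat t) = w(t) = 0$.

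The argument is essentially immediate once the factorization is set up, so there is no serious obstacle; the only point requiring care is the bookkeeping that translates the combinatorial triviality condition into the statement that one of the exponents $e_{ii}$ or $e_{ij}$ equals $1$. Since the definition provides this condition in two equivalent phrasings, one in terms of the columns of $t$ and one in terms of the loops and edges of $\hat t$, I would simply verify that the edge-count interpretation of $w(\hat t)$ matches the table-entry interpretation and then conclude.
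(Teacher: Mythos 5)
Your proof is correct and is exactly the argument the paper leaves implicit (the proposition is stated there as an unproved observation): pull out the deterministic factors $m_1$ from marked edges, factor the remaining expectation over the independent variables $Y_{ii}$ and $Y_{ij}$ using the symmetry $Y_{ij}=Y_{ji}$, and note that triviality forces some factor to be $\mu_1=0$. Your bookkeeping correctly handles the one subtle case, namely a pair $\{i,j\}$ carrying one unmarked edge plus a marked one, since the mark contributes only a constant and leaves $Y_{ij}$ to the first power.
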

We will split our analysis into cases depending on the number of marks in the table/multi-graph.
\begin{definition}
We define $G^{r}_{k,n}$ to be the set of tables in $G^{\times}_{k,n}$ which have exactly $r$ marks. Similarly, we define $\mathcal{G}^{r}_{k,n}$ to be the set of multi-graphs in $\mathcal{G}^{r}_{k,n}$ which have exactly $r$ marked edges. Note that we can have $r = 0$, in that case, we often omit the superscript.
\end{definition}
\begin{definition}
We define
\begin{equation}
g^r_k(n) = \sum_{\hat{t} \in \mathcal{G}^r_{k,n}} w(\hat{t}) \sgn (t)  \qquad \text{and} \qquad G^r_k(t) = \sum_{n=0}^\infty \frac{t^n}{n!} g^r_k(n).
\end{equation}
\end{definition}
\begin{proposition}\label{prop:hatFfromhatG}
For any distribution of $X_{ij}$,
\begin{equation}
f_k(n) = \sum_{r=0}^k g^r_k(n) \qquad \text{and} \qquad F_k(t) = \sum_{r=0}^k G_k^r(t).
\end{equation}
\end{proposition}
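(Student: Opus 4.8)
The plan is to start from the marked-permutation expansion of a single determinant, take the $k$-fold product, push the expectation inside the resulting finite sum, and then reorganize the sum over $k$-tuples of marked permutations according to how many of them carry a mark. Everything here is bookkeeping, so the only real care needed is in setting up the correspondences cleanly.

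First I would apply the proposition $\det X = \sum_{\sigma \in S_n^\times} \sgn(\sigma) \prod_{i=1}^n Y^\times_{i\sigma(i)}$ to each of the $k$ factors in $f_k(n) = \Exx\left[\det(X)^k\right]$. Expanding the product of the $k$ sums produces a sum over $k$-tuples $(\sigma_1,\ldots,\sigma_k) \in (S_n^\times)^k$, and since this sum is finite I may interchange it with the expectation to get
\begin{equation*}
f_k(n) = \sum_{(\sigma_1,\ldots,\sigma_k) \in (S_n^\times)^k} \left(\prod_{j=1}^k \sgn(\sigma_j)\right) \Exx\left[\prod_{j=1}^k \prod_{i=1}^n Y^\times_{i\sigma_j(i)}\right].
\end{equation*}
Each marked factor $Y^\times_{i\sigma_j(i)}$ is either the deterministic constant $m_1$ (when $i$ is the marked element of $\sigma_j$) or a genuine entry $Y_{i\pi_j(i)}$, so the expectation is exactly the expected product of the random entries with the constants pulled out.

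Next I would identify each $k$-tuple $(\sigma_1,\ldots,\sigma_k)$ with the marked table $t \in G^\times_{k,n}$ whose $j$th row is $\sigma_j$. Under this identification $\prod_{j=1}^k \sgn(\sigma_j) = \sgn(t)$ and the expectation above is precisely $w(t)$ as in Definition \ref{Def:GfromF}, so the displayed sum becomes $\sum_{t \in G^\times_{k,n}} \sgn(t)\, w(t)$. Because every marked permutation carries at most one mark, each of the $k$ rows contributes at most one mark; hence the total number of marks $r$ of any $t \in G^\times_{k,n}$ satisfies $0 \le r \le k$, and partitioning $G^\times_{k,n}$ by the value of $r$ is both exhaustive and disjoint. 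This yields $f_k(n) = \sum_{r=0}^k \sum_{t \in G^r_{k,n}} \sgn(t)\, w(t)$, and passing through the weight- and sign-preserving bijection $t \mapsto \hat{t}$ between $G^r_{k,n}$ and $\mathcal{G}^r_{k,n}$ rewrites the inner sum as $g^r_k(n)$, proving the first identity. The EGF statement then follows by linearity: multiplying $f_k(n) = \sum_{r=0}^k g^r_k(n)$ by $t^n/n!$, summing over $n \ge 0$, and interchanging the finite sum over $r$ with the sum over $n$ gives $F_k(t) = \sum_{r=0}^k G^r_k(t)$.

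The step I expect to require the most care is not an obstacle in the analytic sense but in the bookkeeping: verifying that the bijection between $k$-tuples of marked permutations and marked tables is a clean one-to-one correspondence that simultaneously matches signs and weights, and that the partition by mark count $r \in \{0,\ldots,k\}$ captures every marked table exactly once. Once these correspondences are pinned down, the interchange of the finite sums with the expectation and with the summation over $n$ is immediate.
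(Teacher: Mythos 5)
Your proof is correct and is exactly the argument the paper leaves implicit: the proposition is stated there without proof because it follows directly from the marked-permutation expansion of $\det X$, the identification of $k$-tuples of marked permutations with marked tables (and their sign- and weight-preserving correspondence with marked multi-graphs), and the partition of $G^\times_{k,n}$ by the number of marks $r \in \{0,\ldots,k\}$ --- precisely the bookkeeping you carry out. Your expansion of these steps, including the interchange of the finite sums with the expectation and with the summation over $n$, is accurate and complete.
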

In order to apply the techniques of Flajolet and Sedgewick \cite{flajolet2009analytic}, it is convenient to absorb $\sgn(\hat{t})$ and $t^n$ into the weight of $\hat{t}$.
\begin{definition}
We define $\mathcal{G}^\times_k$ to be the (disjoint) union $\bigsqcup_{n=0}^\infty \mathcal{G}^\times_{k,n}$. For a given graph $\hat{g} \in \mathcal{G}^\times_k$, such that $\hat{g} \in \mathcal{G}^\times_{k,n}$ for some $n$, we define the corresponding weight $W(\hat{g}) =  w(\hat{g}) \sgn(\hat{g}) t^n$, where $\sgn(\hat{g}) = (-1)^{kn} (-1)^{C(\hat{g})}$ and we define the order $|\hat{g}|$ of $\hat{g}$ to be $|\hat{g}| = n$. Similarly, we define $\mathcal{G}^r_k = \bigsqcup_{n=0}^\infty \mathcal{G}^r_{k,n}$.
\end{definition}
\begin{proposition}
For any distribution,
\begin{equation}
F_k(t) = \sum_{\hat{g} \in \mathcal{G}^\times_k} \frac{W(\hat{g})}{|\hat{g}|!}.
\end{equation}
\end{proposition}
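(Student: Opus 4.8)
The plan is to prove this statement purely by unwinding the definitions and reindexing a triple sum, since all of the substantive combinatorial content has already been established. I would start from Proposition~\ref{prop:hatFfromhatG}, which gives $F_k(t) = \sum_{r=0}^k G_k^r(t)$, and then expand each $G_k^r(t)$ using its definition $G^r_k(t) = \sum_{n=0}^\infty \frac{t^n}{n!} g^r_k(n)$ together with $g^r_k(n) = \sum_{\hat t \in \mathcal{G}^r_{k,n}} w(\hat t)\sgn(t)$. This produces a triple sum over $r \in \{0,\dots,k\}$, over $n \geq 0$, and over $\hat t \in \mathcal{G}^r_{k,n}$, with summand $\frac{t^n}{n!}\, w(\hat t)\sgn(t)$.

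The key step is to recognize this summand as $W(\hat t)/|\hat t|!$. First I would absorb the sign: from the definition of $\sgn(\hat t)$ we have $\sgn(t) = \sgn(\hat t) = (-1)^{kn - C(\hat t)} = (-1)^{kn}(-1)^{C(\hat t)}$, which is exactly the value $\sgn(\hat g) = (-1)^{kn}(-1)^{C(\hat g)}$ assigned in the definition of $W$. Hence $w(\hat t)\sgn(t)\,t^n = w(\hat t)\sgn(\hat t)\,t^n = W(\hat t)$. Since every $\hat t \in \mathcal{G}^r_{k,n}$ has order $|\hat t| = n$, the factor $1/n!$ equals $1/|\hat t|!$, so the summand becomes $W(\hat t)/|\hat t|!$ as desired.

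It then remains to collapse the triple sum into a single sum over $\mathcal{G}^\times_k$. For this I would observe that each marked multi-graph in $\mathcal{G}^\times_{k,n}$ carries at most one marked edge of each of the $k$ colors, so its total number $r$ of marked edges lies in $\{0,1,\dots,k\}$ and is well defined; this yields the disjoint decomposition $\mathcal{G}^\times_{k,n} = \bigsqcup_{r=0}^k \mathcal{G}^r_{k,n}$. Combining this with $\mathcal{G}^\times_k = \bigsqcup_{n=0}^\infty \mathcal{G}^\times_{k,n}$, the sums over $r$ and $n$ merge into a single sum over $\hat g \in \mathcal{G}^\times_k$, giving $F_k(t) = \sum_{\hat g \in \mathcal{G}^\times_k} W(\hat g)/|\hat g|!$.

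The argument is essentially bookkeeping, so the only point genuinely requiring care — the ``main obstacle'' — is checking that the two sign conventions coincide, i.e., that the $(-1)^{kn}$ and $(-1)^{C(\cdot)}$ factors appearing in $\sgn(\hat t)$ and in the definition of $\sgn(\hat g)$ are identical and that no extra sign is hidden in passing between $\sgn(t)$, $\sgn(\hat t)$, and $\sgn(\hat g)$. Once that identification is confirmed and the disjoint decomposition of $\mathcal{G}^\times_{k,n}$ is noted, the reindexing is routine.
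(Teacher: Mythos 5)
Your proof is correct and is exactly the definition-unwinding argument the paper intends (the paper states this proposition without proof precisely because it reduces to this bookkeeping): combining Proposition~\ref{prop:hatFfromhatG} with the definitions of $g^r_k(n)$ and $W(\hat g)$, noting $(-1)^{kn-C(\hat g)} = (-1)^{kn}(-1)^{C(\hat g)}$, and merging the sums via $\mathcal{G}^\times_k = \bigsqcup_{n\geq 0}\bigsqcup_{r=0}^k \mathcal{G}^r_{k,n}$. Your flagged ``main obstacle'' --- that the sign conventions for $\sgn(t)$, $\sgn(\hat t)$, and $\sgn(\hat g)$ agree --- is indeed the only point needing care (marking an edge leaves the monochromatic cycle count unchanged), and you resolve it correctly.
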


\begin{remark}\label{Rem:GfromF}
Note that $\mathcal{G}_k = \mathcal{G}_k^0$ tables (with no marks) and $\mathcal{F}_n$ tables are equivalent when $m_1 = 0$. More precisely, $f_k(n)|_{m_1=0} = g_k(n)|_{\mu_q\to m_q}$ and thus $F_k(t)|_{m_1=0} = G_k(t)|_{\mu_q\to m_q}$.
\end{remark}

\begin{example}\label{ex:G2tabletimes}
An example of nontrivial table $t \in G^\times_{2,9}$ and its corresponding multi-graph.
\begin{figure}[H]
\centering
\begin{minipage}{.48\textwidth}
        \centering
    \begin{tabular}{|c|c|c|c|c|c|c|c|c|}
    \hline
        1 & 2 & 3 & 4 & 5 & 6 & 7 & 8 & 9\\
    \hdashline
        3 & 7 & 1 & $\!\!\times\!\!$ & 5 & 2 & 6 & 9 & 8\\
        3 & 6 & 1 & 9 & 5 & 7 & 2 & $\!\!\times\!\!$ & 4\\
    \hline
    \end{tabular}
\caption{$t \in  G^\times_{2,9}$ with weight $w(t) = \mu_2^6\mu_4m_1^2$,}
    \end{minipage}%
\hfill
\begin{minipage}{.48\textwidth}
        \centering
\begin{tikzpicture}[scale = 0.85]
\node[vertex] (1) at (-2.5,1) {$1$};
\node[vertex] (3) at (-2.5,-1) {$3$};
\node[vertex] (2) at (-1,0.8) {$2$};
\node[vertex] (6) at (1,0.8) {$6$}; 
\node[vertex] (7) at (0,-1) {$7$};
\node[vertex] (5) at (5,0) {$5$};
\node[vertex] (4) at (3,1) {$4$};
\node[vertex] (9) at (2,0) {$9$};
\node[vertex] (8) at (3,-1) {$8$};
\draw[->,dashed] (1) to [bend left=50] (3);
\draw[->] (1.-65) to [bend left=50] (3.65);
\draw[->,dashed] (3) to [bend left=50] (1);
\draw[->] (3.115) to [bend left=50] (1.-115);
\draw[->,dashed] (2) to [bend left=35] (6);
\draw[->] (6.182) to [bend left=-40] (2.-002);
\draw[->,dashed] (6) to [bend left=40] (7);
\draw[->] (7.55) to [bend left=-40] (6.250);
\draw[->,dashed] (7) to [bend left=40] (2);
\draw[->] (2.290) to [bend left=-40] (7.130);
\draw[->] (5.-60) arc (180+45+20:540-45-15:0.45);
\draw[->,dashed] (5.-90) arc (180+45:540-45:0.6);
        \draw[->] (4.-70+0) arc (-130+0:125+0:0.45) node[fill=white,midway,sloped]{$\!\!\times\!\!$};
        \draw[->,dashed] (4) to [bend left=20] (9);
        \draw[->,dashed] (9) to [bend left=20] (4);
        \draw[->] (9) to [bend left=20] (8);
        \draw[->] (8) to [bend left=20] (9);
        \draw[->,dashed] (8.-70) arc (-130:125:0.45) node[fill=white,midway,sloped]{$\!\!\times\!\!$};
\end{tikzpicture}
\caption{The corresponding $\hat{t} \in \mathcal{G}^\times_{2,9}$}
\label{fig:G2tabletimes}
    \end{minipage}%
\end{figure}
\end{example}

\subsection{Rederivation of the first symmetric moment}
To demonstrate the technique of using marked tables, we rederive $F_1(t)$.

\subsubsection{Zero marks}
Consider the set of all nontrivial $1$-graphs in $\mathcal{G}_1 = \mathcal{G}^0_1$. This set includes only the permutations which decompose into mussels ($2$-cycles). Thus,
\begin{equation}
        \mathcal{G}_1 = \mathcal{G}_1^0 = \textsc{Set}\Bigg{(}
\begin{tikzpicture}[baseline=-0.65ex, scale = 0.8]
\node[vertex] (2) at (0,0.8) {$2$};
\node[vertex] (1) at (0,-0.8) {$1$};
\draw[->] (2) to [bend left=50] (1);
\draw[->] (1) to [bend left=50] (2);
\end{tikzpicture}
\Bigg{)}.
\end{equation}
In terms of the EGF,
\begin{equation}
G^0_1(t) = e^{-\frac12 \mu_2 t^2}.
\end{equation}

\subsubsection{One mark}
It turns out there is only one nontrivial structure which can have one mark:
\begin{itemize}
    \item A marked fixed point (marked $1$-cycle) with weight $W=m_1 t$
\end{itemize}
Note that there are no marked mussels as they would have weight equal to zero.

\paragraph{Marked fixed points} As there is only one mark per multi-graph $\hat{t}$ in $\mathcal{G}^1_1$, there must be exactly one structure containing it. The only nontrivial structure of $\mathcal{G}_1^1$ is precisely the marked fixed point $\mathcal{X}_1$, whose EGF is $m_1 t$.

\paragraph{Conclusion}
Since other structures of $\mathcal{G}_1^1$ are taken from $\mathcal{G}_1$, we have $\mathcal{G}_1^1 = \mathcal{X}_1 \star \mathcal{G}_1$. Hence, in terms of the EGFs, $G^1_1(t) = m_1t\,G_1(t)$.

\subsubsection{Full generating function}
By Proposition \ref{prop:hatFfromhatG},
\begin{equation}
F_1(t) = G^0_1(t)+G^1_1(t) = (1+m_1 t)G_1(t) = (1+m_1 t) e^{-\frac12 \mu_2 t^2} = (1+m_1 t) e^{\frac12 (m_1^2-m_2) t^2},
\end{equation}
which matches Equation \eqref{Eq:F1}.

\subsection{Second symmetric moment}
Next, we move to the case of $k = 2$. To derive the full generating function $F_2(t)$, we must analyze all of the possible structures of nontrivial graphs in $\mathcal{G}^\times_2$.

\subsubsection{Zero marks}
\begin{proposition}
For any distribution of $X_{ij}$ with $m_1 = 0$,
\begin{equation}
F_2(t) = \frac{\exp\left(\frac{(m_4-3m_2^2) t^2}{2}-m_2 t\right)}{\left(1-m_2 t\right){}^2 \sqrt{1-m_2^2 t^2}}
\end{equation}
\end{proposition}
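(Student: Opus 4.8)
The plan is to compute $F_2(t)$ by the same analytic-combinatorics recipe used above for $F_1(t)$. Since we are in the case $m_1 = 0$, Proposition \ref{prop:hatFfromhatG} together with Remark \ref{Rem:GfromF} gives $F_2(t) = G_2(t) = G_2^0(t)$, so it suffices to enumerate the nontrivial unmarked two-colored permutation multigraphs in $\mathcal{G}_2$, each viewed as a disjoint union of connected components. Because $\mathcal{G}_2$ is the $\textsc{Set}$ of its connected pieces, $F_2(t) = \exp(L(t))$ where $L(t)$ is the EGF of the connected nontrivial components, and the whole problem reduces to (a) classifying these components and (b) summing their EGFs into closed form.

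First I would classify the connected components. Nontriviality forbids any vertex from carrying a lone loop and forces every adjacent pair of vertices to be joined by at least two edges; combined with the fact that each vertex has solid in/out-degree and dashed in/out-degree all equal to $1$ (total degree $4$), this is rigid. I expect exactly the following families: (i) a single vertex bearing both a solid and a dashed loop (order $1$, weight $m_2$); and (ii) the \emph{doubled cycle} on $\ell \ge 2$ vertices, whose underlying undirected graph is an $\ell$-cycle with every edge doubled. For (ii) I would show that a vertex of a component of order $\ge 3$ must touch exactly two neighbors through double edges, so the component is a single cycle, and then that the only consistent ways to fill in colors and orientations are the \emph{rotation type} (solid and dashed are each a single $\ell$-cycle, with the same or the opposite orientation) and, when $\ell = 2k$ is even, the \emph{alternating type} (solid is a perfect matching of adjacent vertices into mussels, dashed the complementary matching). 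The degenerate case $\ell = 2$ collapses all of these onto the single structure consisting of a solid $2$-cycle together with a dashed $2$-cycle, which has four parallel edges and weight $m_4$.

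Next I would compute the EGF of each family, using that for $k = 2$ the sign is $(-1)^{2n}(-1)^{C(\hat g)} = (-1)^{C(\hat g)}$ and that every component above has an even number $C(\hat g)$ of monochromatic cycles (namely $2$ for the double loop, the order-$2$ structure, and the rotation necklaces, and $2k$ for an alternating necklace of length $2k$); hence every contribution carries sign $+1$. Counting directed $\ell$-cycles (there are $(\ell-1)!$ of them) shows that the two rotation subfamilies together contribute $2\sum_{\ell \ge 3}(m_2 t)^\ell/\ell$, and counting the $(2k-1)!$ alternating necklaces on $2k$ labeled vertices shows the alternating family contributes $\tfrac12\sum_{k\ge 2}(m_2^2 t^2)^k/k$; the double loop contributes $m_2 t$ and the order-$2$ structure contributes $\tfrac12 m_4 t^2$. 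Extending the two necklace sums down to their degenerate lengths produces the clean closed forms $-2\log(1-m_2 t)$ and $-\tfrac12\log(1 - m_2^2 t^2)$, i.e. the factors $(1-m_2 t)^{-2}$ and $(1-m_2^2 t^2)^{-1/2}$, while the prefactor $\exp\!\big(\tfrac{(m_4-3m_2^2)t^2}{2} - m_2 t\big)$ is exactly the correction that replaces the spurious low-order terms of these sums by the true contributions of the double loop (at order $1$) and the single $m_4$ structure (at order $2$).

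The main obstacle is step (a): proving the classification is exhaustive — that no other color/orientation pattern on a doubled cycle yields a valid pair of permutations, and that no connected nontrivial component has a more complicated shape (e.g.\ a vertex joined to three or more neighbors, or solid and dashed cycles of differing lengths overlapping). The bookkeeping of the degenerate lengths $\ell \in \{1,2\}$ is the other delicate point: one must carefully reconcile that at $\ell=1$ the two rotation orientations coincide (the double loop is counted once, not twice) and that at $\ell = 2$ both necklace families collapse onto the single weight-$m_4$ structure, which is precisely what the prefactor $\exp\!\big(\tfrac{(m_4-3m_2^2)t^2}{2}-m_2 t\big)$ encodes. Once the classification and these small-case corrections are pinned down, multiplying the factors gives the claimed formula.
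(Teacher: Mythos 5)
Your proposal is correct and takes essentially the same approach as the paper: the paper likewise decomposes the nontrivial unmarked graphs in $\mathcal{G}_2$ into fixed points, mussels (your degenerate order-$2$ doubled cycle of weight $m_4$), doubled loops in two relative orientations (your rotation type), and alternating necklaces, then multiplies the component EGFs via the $\textsc{Set}$/$\textsc{Cyc}$ constructions. Your direct labeled counts ($2(\ell-1)!$ rotation loops, $(2k-1)!$ alternating necklaces) and your bookkeeping of the spurious low-order terms, yielding the exponent $-m_2 t + \tfrac12(m_4-3m_2^2)t^2$, reproduce the paper's factors exactly, with the same (correct) observation that every component has an even number of monochromatic cycles and hence sign $+1$.
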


\begin{remark}
For the special case when $m_2 = 1$, we obtain
\begin{equation}
F_2(t)=\frac{\exp\left(\frac{(m_4-3) t^2}{2}-t\right)}{(1-t)^2 \sqrt{1-t^2}}
\end{equation}
\end{remark}
\begin{proof}
First, in the spirit of Remark \ref{Rem:GfromF}, we consider the structure of nontrivial graphs in $\mathcal{G}_{2}$. An example of such graph is in Figure \ref{fig:G2table}.

\begin{example}\label{ex:G2table}
An example of a nontrivial table $t \in G^0_{2,10}$ and its corresponding multi-graph.
\begin{figure}[H]
\centering
\begin{minipage}{.48\textwidth}
        \centering
    \begin{tabular}{|c|c|c|c|c|c|c|c|c|c|}
    \hline
        1 & 2 & 3 & 4 & 5 & 6 & 7 & 8 & 9 & \!10\!\\
    \hdashline
        3 & 7 & 1 & 9 & 8 & 2 & 6 & 5 & 4 & \!10\!\\
        3 & 6 & 1 & 5 & 4 & 7 & 2 & 9 & 8 & \!10\!\\
    \hline
    \end{tabular}
\caption{$t \in  G_{2,10}$ with weight $w(t) = \mu_2^8\mu_4$,}
    \end{minipage}%
\hfill
\begin{minipage}{.48\textwidth}
        \centering
\begin{tikzpicture}[scale = 0.85]
\node[vertex] (1) at (-2.5,1) {$1$};
\node[vertex] (3) at (-2.5,-1) {$3$};
\node[vertex] (2) at (-1,0.8) {$2$};
\node[vertex] (6) at (1,0.8) {$6$}; 
\node[vertex] (7) at (0,-1) {$7$};
\node[vertex] (10) at (5,0) {$10$};
\node[vertex] (4) at (2,0) {$4$};
\node[vertex] (9) at (3,1) {$9$};
\node[vertex] (8) at (4,0) {$8$};
\node[vertex] (5) at (3,-1) {$5$};
\draw[->,dashed] (1) to [bend left=50] (3);
\draw[->] (1.-65) to [bend left=50] (3.65);
\draw[->,dashed] (3) to [bend left=50] (1);
\draw[->] (3.115) to [bend left=50] (1.-115);
\draw[->,dashed] (2) to [bend left=35] (6);
\draw[->] (6.182) to [bend left=-40] (2.-002);
\draw[->,dashed] (6) to [bend left=40] (7);
\draw[->] (7.55) to [bend left=-40] (6.250);
\draw[->,dashed] (7) to [bend left=40] (2);
\draw[->] (2.290) to [bend left=-40] (7.130);
\draw[->,dashed] (9) to [bend left=20] (8);
\draw[->,dashed] (8) to [bend left=20] (9);
\draw[->] (8) to [bend left=20] (5);
\draw[->] (5) to [bend left=20] (8);
\draw[->,dashed] (5) to [bend left=20] (4);
\draw[->,dashed] (4) to [bend left=20] (5);
\draw[->] (4) to [bend left=20] (9);
\draw[->] (9) to [bend left=20] (4);
\draw[->] (10.-60) arc (180+45+20:540-45-15:0.45);
\draw[->,dashed] (10.-90) arc (180+45:540-45:0.6);
\end{tikzpicture}
\caption{A corresponding $\hat{t} \in \mathcal{G}_{2,10}$}
\label{fig:G2table}
    \end{minipage}%
\end{figure}
\end{example}

This example was purposely created to show all types of structures a graph $\hat{t}$ from $\mathcal{G}_2$ can have. For $\mathcal{G}_{2}$, we have the following structures.
\begin{itemize}
    \item Fixed points (two $1$-cycles on the same element) with weight $W = \mu_2 t$.
    \item Mussels (two $2$-cycles on two elements) with weight $W = \mu_4 t^2$.
    \item Loops (double cycles on $n\geq 3$ vertices) with weight $W = \mu_2^n t^n$.
    \item Necklaces (closed chains of alternating dashed/undashed $2$-cycles on $n$ vertices where $n \geq 4$ is even) with weight $W = \mu_2^n t^n$.
\end{itemize}
Representatives of each type are also shown in Table \ref{tab:typesG20}.
\begin{table}[H]
    \centering
\begin{tabular}{cccc}
        \begin{tikzpicture}[baseline = -1.3ex,scale = 0.85]
        \node[vertex] (1) at (-2.5,1) {$1$};
        \draw[->] (1.-60) arc (180+45+20:540-45-15:0.45);
        \draw[->,dashed] (1.-90) arc (180+45:540-45:0.6);
        \end{tikzpicture}
     &
        \begin{tikzpicture}[scale = 0.85]
        \node[vertex] (1) at (-2.5,1) {$1$};
        \node[vertex] (2) at (-2.5,-1) {$2$};
        \draw[->] (1.-65) to [bend left=50] (2.65);
        \draw[->,dashed] (2) to [bend left=50] (1);
        \draw[->] (2.115) to [bend left=50] (1.-115);
        \draw[->,dashed] (1) to [bend left=50] (2);
        \end{tikzpicture}
     &
        \begin{tikzpicture}[scale = 0.85]
        \node[vertex] (1) at (-1,0.8) {$1$};
        \node[vertex] (2) at (1,0.8) {$2$}; 
        \node[vertex] (3) at (0,-1) {$3$};
        \draw[->] (2.182) to [bend left=-40] (1.-002);
        \draw[->,dashed] (1) to [bend left=35] (2);
        \draw[->,dashed] (2) to [bend left=40] (3);
        \draw[->] (3.55) to [bend left=-40] (2.250);
        \draw[->,dashed] (3) to [bend left=40] (1);
        \draw[->] (1.290) to [bend left=-40] (3.130);
        \end{tikzpicture}      
     &
        \begin{tikzpicture}[scale = 0.85]
        \node[vertex] (4) at (3,1) {$4$};
        \node[vertex] (3) at (4,0) {$3$};
        \node[vertex] (2) at (3,-1) {$2$};
        \node[vertex] (1) at (2,0) {$1$};
        \draw[->,dashed] (4) to [bend left=20] (3);
        \draw[->,dashed] (3) to [bend left=20] (4);
        \draw[->] (3) to [bend left=20] (2);
        \draw[->] (2) to [bend left=20] (3);
        \draw[->,dashed] (2) to [bend left=20] (1);
        \draw[->,dashed] (1) to [bend left=20] (2);
        \draw[->] (1) to [bend left=20] (4);
        \draw[->] (4) to [bend left=20] (1);
        \end{tikzpicture} 
     \\
     Fixed point & Mussel & Loop & Necklace
        \\
        $\mathcal{X}_2$
        & $\mathcal{M}_2$
        & $\mathcal{L}_2$
        & $\mathcal{N}_2$
\end{tabular}
    \caption{Types of structures in $\mathcal{G}_2$}
    \label{tab:typesG20}
\end{table}

\paragraph{Fixed points} Let $\mathcal{X}_2$ be the set of fixed points in $\mathcal{G}_2$. Then
\begin{equation}
    \mathcal{X}_2 = \textsc{Set} \bigg{(}\!\!\!
        \begin{tikzpicture}[baseline=3.7ex,scale = 0.85]
        \node[vertex] (1) at (-2.5,1) {$1$};
        \draw[->] (1.-60) arc (180+45+20:540-45-15:0.45);
        \draw[->,dashed] (1.-90) arc (180+45:540-45:0.6);
        \end{tikzpicture}
\bigg{)}.
\end{equation}
In terms of the EGF, this gives a factor of $\exp(\mu_2 t)$.

\paragraph{Mussels}
Next, let $\mathcal{M}_2$ be the set of mussels in $\mathcal{G}_2$. Then
\begin{equation}
    \mathcal{M}_2 = \textsc{Set}\Bigg{(}
        \begin{tikzpicture}[baseline=-0.65ex,scale = 0.85]
        \node[vertex] (1) at (-2.5,1) {$1$};
        \node[vertex] (2) at (-2.5,-1) {$2$};
        \draw[->] (1.-65) to [bend left=50] (2.65);
        \draw[->,dashed] (2) to [bend left=50] (1);
        \draw[->] (2.115) to [bend left=50] (1.-115);
        \draw[->,dashed] (1) to [bend left=50] (2);
        \end{tikzpicture}
\Bigg{)}.
\end{equation}
The EGF of the inner factor is $\mu_4 t^2/2!$ so the EGF of $\mathcal{M}_2$ is $\exp(\tfrac12 \mu_4 t^2)$.

\paragraph{Loops} There are two ways how we can make a loop. Either the undashed and dashed cycles have the same orientation, or they have opposite orientations. Letting $\mathcal{L}_2$ be the set of loops in $\mathcal{G}_2$, we have
\begin{equation}
    \mathcal{L}_2 = \textsc{Set} \bigg{(} \textsc{Cyc}_{\geq 3} \bigg{(} \oaloodup \!\!\!\!\!\!
\bigg{)}\bigg{)} \star
\textsc{Set} \bigg{(} \textsc{Cyc}_{\geq 3} \bigg{(} \oalooduo \!\!\!\!\!\!
\bigg{)}\bigg{)}.
\end{equation}
The corresponding EGF of $\mathcal{L}_2$ is equal to
\begin{equation}
\left[\exp\left(-\mu_2t - \frac{\mu_2^2 t^2}{2} + \ln\left(\frac{1}{1-\mu_2 t}\right)\right)\right]^2 = \frac{e^{-2\mu_2 t -\mu_2^2 t^2}}{(1-\mu_2 t)^2}.
\end{equation}

\paragraph{Necklaces} Finally, the set of necklaces $\mathcal{N}_2$ is formed as cycles of two alternating $2$-cycles joined together. Note that it depends on the order of the elements in consecutive $2$-cycles. However, every necklace is counted twice as the mirror image of a necklace is counted as the same necklace. Overall, we have that
\begin{equation}
    \mathcal{N}_2 = \textsc{Set} \Bigg{(} \frac12 \,  \textsc{Cyc}_{\geq 2} \Bigg{(}
\obchauab \!\!\!\!\!\! + \!\!\! \obchauba \!\!\! \Bigg{)}\Bigg{)}.
\end{equation}
The resulting EGF is
\begin{equation}
\exp\left(\frac12\left(-2\mu_2^2\frac{t^2}{2!}+\ln\left(\frac{1}{1-2\mu_2^2\frac{t^2}{2!}}\right)\right)\right) = \frac{e^{-\frac12 \mu_2^2 t^2}}{\sqrt{1-\mu_2^2 t^2}}.   
\end{equation}

\paragraph{Conclusion}
Since $\mathcal{G}_2 = \mathcal{X}_2 \star \mathcal{M}_2 \star \mathcal{L}_2 \star \mathcal{N}_2$, by multiplying the EGFs we obtain that
\begin{equation}
G_2(t)= e^{\mu_2 t} e^{\frac12 \mu_4 t^2} \frac{e^{-2\mu_2 t -\mu_2^2 t^2}}{(1-\mu_2 t)^2} \frac{e^{-\frac12 \mu_2^2 t^2}}{\sqrt{1-\mu_2^2 t^2}} = \frac{\exp\left(\frac{(\mu _4-3\mu_2^2) t^2}{2}-\mu _2 t\right)}{\left(1-\mu _2 t\right){}^2 \sqrt{1-\mu _2^2 t^2}}.
\end{equation}
\end{proof}

\subsubsection{One mark}
Note that all unmarked structures are also part of every graph $\mathcal{G}^r_2$ for $r \geq 1$. Thus, we only need to find the exponential generating function of connected structures having marks and add them up (as they are disjoint). It turns out that the only marked structure of each $\hat{t} \in \mathcal{G}_2^1$ is a mussel with exactly one mark ($1$-marked mussel).
\begin{example}\label{ex:G21table}
An example of a nontrivial table $t \in G^1_{2,9}$ and its corresponding nontrivial multi-graph $\hat{t} \in \mathcal{G}^1_{2,9}$.
\begin{figure}[H]
\centering
\begin{minipage}{.48\textwidth}
        \centering
    \begin{tabular}{|c|c|c|c|c|c|c|c|c|}
    \hline
        1 & 2 & 3 & 4 & 5 & 6 & 7 & 8 & 9 \\
    \hdashline
        3 & 7 & 1 & 9 & 8 & 2 & 6 & 5 & 4 \\
        3 & 6 & $\times$ & 5 & 4 & 7 & 2 & 9 & 8\\
    \hline
    \end{tabular}
\caption{$t \in  G_{2,9}$ with $w(t) = m_1\mu_2^7 \mu_3$,}
    \end{minipage}%
\hfill
\begin{minipage}{.48\textwidth}
        \centering
\begin{tikzpicture}[scale = 0.85]
\node[vertex] (1) at (-2.5,1) {$1$};
\node[vertex] (3) at (-2.5,-1) {$3$};
\node[vertex] (2) at (-1,0.8) {$2$};
\node[vertex] (6) at (1,0.8) {$6$}; 
\node[vertex] (7) at (0,-1) {$7$};
\node[vertex] (4) at (2,0) {$4$};
\node[vertex] (9) at (3,1) {$9$};
\node[vertex] (8) at (4,0) {$8$};
\node[vertex] (5) at (3,-1) {$5$};
\draw[->,dashed] (1) to [bend left=50] (3);
\draw[->] (1.-65) to [bend left=50] (3.65);
\draw[->,dashed] (3) to [bend left=50] node[fill=white,sloped]{$\!\!\!\times\!\!\!$} (1);
\draw[->] (3.115) to [bend left=50] (1.-115);
\draw[->,dashed] (2) to [bend left=35] (6);
\draw[->] (6.182) to [bend left=-40] (2.-002);
\draw[->,dashed] (6) to [bend left=40] (7);
\draw[->] (7.55) to [bend left=-40] (6.250);
\draw[->,dashed] (7) to [bend left=40] (2);
\draw[->] (2.290) to [bend left=-40] (7.130);
\draw[->,dashed] (9) to [bend left=20] (8);
\draw[->,dashed] (8) to [bend left=20] (9);
\draw[->] (8) to [bend left=20] (5);
\draw[->] (5) to [bend left=20] (8);
\draw[->,dashed] (5) to [bend left=20] (4);
\draw[->,dashed] (4) to [bend left=20] (5);
\draw[->] (4) to [bend left=20] (9);
\draw[->] (9) to [bend left=20] (4);
\end{tikzpicture}
\caption{The corresponding $\hat{t} \in \mathcal{G}_{2,10}$}
\label{fig:G21table}
    \end{minipage}%
\end{figure}
\end{example}
The EGF of a mussel with exactly one mark is equal to $\tfrac12 m_1\mu_3 t^2$. However, there are four places where we can put our mark so the full EGF is $2m_1\mu_3 t^2$ and
\begin{equation}
    G_2^1(t) = 2m_1\mu_3 t^2G_2(t)
\end{equation}
as we fill the remaining space with unmarked structures.

\subsubsection{Two marks}
The following Table \ref{tab:typesG22} shows representatives of all types of the remaining (connected) structures having exactly two marks. A structure with two marks can be
\begin{itemize}
    \item A marked loop (double cycle on $n\geq 2$ vertices with two marks) with weight $W = m_1^2\mu_2^{n-1} t^n$.
    \item A marked chain (alternating dashed/undashed $2$-cycle on an odd number $n$ of vertices, provided we attach two marked $1$-cycles to the endpoints) with weight $W = m_1^2\mu_2^{n-1} t^n$.
\end{itemize}
\begin{table}[H]
    \centering
\begin{tabular}{cc}
        \begin{tikzpicture}[scale = 0.85]
        \node[vertex] (1) at (-1,0.8) {$1$};
        \node[vertex] (2) at (1,0.8) {$2$}; 
        \node[vertex] (3) at (0,-1) {$3$};
        \draw[->] (2.182) to [bend left=-40] (1.-002);
        \draw[->,dashed] (1) to [bend left=35] node[fill=white,sloped]{$\!\!\!\times\!\!\!$} (2);
        \node[vertex] (x) at (0,1) {$\times$};
        \draw[->,dashed] (2) to [bend left=40] (3);
        \draw[->] (3.55) to [bend left=-40] (2.250);
        \draw[->,dashed] (3) to [bend left=40] (1);
        \draw[->] (1.290) to [bend left=-40] (3.130);
        \end{tikzpicture}    
    &
        \begin{tikzpicture}[scale = 0.85]
        \node[vertex] (1) at (0,0) {$1$};
        \node[vertex] (2) at (1.5,1) {$2$};
        \node[vertex] (3) at (3,2) {$3$};
        \draw[->] (1.-70+180) arc (-130+180:125+180:0.45) node[fill=white,midway,sloped]{$\!\!\times\!\!$};
        \draw[->,dashed] (1) to [bend left=20] (2);
        \draw[->,dashed] (2) to [bend left=20] (1);
        \draw[->] (2) to [bend left=20] (3);
        \draw[->] (3) to [bend left=20] (2);
        \draw[->,dashed] (3.-70) arc (-130:125:0.45) node[fill=white,midway,sloped]{$\!\!\times\!\!$};
        \end{tikzpicture}       
    \\
     $2$-Marked loop & $2$-Marked chain
        \\
        $\mathcal{L}_2^2$
        & $\mathcal{C}_2^2$
\end{tabular}
    \caption{Types of structures in $\mathcal{G}_2^2$ with two marks}
    \label{tab:typesG22}
\end{table}

\begin{remark}
There are no $2$-marked necklaces as they would require marking two arrows of the same type, which is forbidden. Also note that there are $2$-marked fixed points, but they are just a special case of chains with a single vertex. Similarly, we absorb $2$-marked mussels into loops, allowing the number of their vertices to start from $n=2$.
\end{remark}

\paragraph{Marked mussels} The EFG of a mussel marked by two marks is equal to $\tfrac12 m_1^2 \mu_2 t^2$. However, there are $4$ ways how we can mark one dashed and one undashed arrow so the total EFG is $2 m_1^2 \mu_2 t^2$.

\paragraph{Marked loops} The set of marked loops $\mathcal{L}_2^2$ with two marks is created by starting with a parallel pair of marked arrows and attaching a sequence of $n-1$ parallel pairs of arrows (where $n \geq 3$) to this marked pair of arrows. For each pair of arrows, one of the arrows is dashed and the other is undashed. We either have that all of the dashed and undashed arrows point in the same direction or we have that all of the dashed arrows point in one direction while all of the undashed arrows point in the opposite direction. Thus,
\begin{equation}
        \mathcal{L}^2_2 = \bigg{(}\oaloomdmup\!\!\!\!\!\! \star \textsc{Seq}_{\geq 1} \bigg{(}
        \oaloodup \!\!\!\!\!\!
        \bigg{)}\bigg{)}
    +
        \bigg{(}\oaloomdmuo\!\!\!\!\!\! \star \textsc{Seq}_{\geq 1} \bigg{(} \oalooduo
        \!\!\!\!\!\!
        \bigg{)}\bigg{)}.
    \end{equation}
In terms of the EGF,
\begin{equation*}
2 m_1^2 t \left(-1+\frac{1}{1-\mu_2 t}\right) = \frac{2 m_1^2 \mu_2 t^2}{1-\mu_2 t}.
\end{equation*}

\paragraph{Marked chains} Let $\mathcal{C}^2_2$ be the set of all marked chains. We can write
\begin{equation}
    \mathcal{C}^2_2 =
        \ofixmul \!\!     
\star  \textsc{Seq} \Bigg{(}
\obchadab\!\!\!\!\!\!+\!\!\!\obchadba\!\!\!\
\Bigg{)}
\star \fixmdar.
\end{equation}
In terms of the EGF,
\begin{equation*}
m_1\left(\frac{1}{1-2\mu_2^2 \frac{t^2}{2!}}\right) m_1 t =  \frac{m_1^2 t}{1-\mu_2^2 t^2}.
\end{equation*}

\paragraph{Pair of one-marked structures} Finally, we can create a multi-graph with two marks by combining a multi-graph with  no marks, a $1$-marked mussel with a mark on a dashed arrow, and a $1$-marked mussel with a mark on an undashed arrow. Since both of these mussels have EGF equal to $m_1\mu_3 t^2$, we get that the contribution to $G^2_2(t)$ is $(m_1\mu_3 t^2)^2 G_2(t)$.

\paragraph{Conclusion}
Adding the contributions together, we get
\begin{equation}
G_2^2(t) = \left(\frac{2 m_1^2 \mu_2 t^2}{1-\mu_2 t} + \frac{m_1^2 t}{1-\mu_2^2 t^2} + (m_1\mu_3 t^2)^2\right)G_2(t).
\end{equation}

\subsubsection{Full generating function}
\begin{proposition}
For any distribution of $X_{ij}$,
\begin{equation}
F_2(t) = \left(1+2 m_1 \mu _3 t^2+m_1^2 \left(\mu_3^2t^4+\frac{t}{1-\mu _2^2 t^2}+\frac{2 \mu _2 t^2}{1-\mu _2 t}\right)\right) \frac{\exp\left(\frac{(\mu _4-3\mu_2^2) t^2}{2}-\mu _2 t\right)}{\left(1-\mu _2 t\right){}^2 \sqrt{1-\mu _2^2 t^2}}.
\end{equation}
\end{proposition}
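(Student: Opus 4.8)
The plan is to assemble $F_2(t)$ from the three generating functions $G_2^0(t)$, $G_2^1(t)$, and $G_2^2(t)$ that were computed in the preceding subsections, using the decomposition of nontrivial marked multi-graphs by the number of marks. Concretely, Proposition~\ref{prop:hatFfromhatG} specialized to $k=2$ gives
\[
F_2(t) = G_2^0(t) + G_2^1(t) + G_2^2(t),
\]
so the only remaining work is to substitute the three pieces and simplify.

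First I would recall the three ingredients. The zero-mark analysis gives $G_2^0(t) = G_2(t) = \frac{\exp\left(\frac{(\mu_4-3\mu_2^2)t^2}{2}-\mu_2 t\right)}{(1-\mu_2 t)^2\sqrt{1-\mu_2^2 t^2}}$. The one-mark analysis gives $G_2^1(t) = 2m_1\mu_3 t^2\, G_2(t)$, since the only nontrivial one-marked connected structure is a singly-marked mussel (inner EGF $\tfrac12 m_1\mu_3 t^2$) with four choices of where to place the mark, attached to an otherwise unmarked graph contributing $G_2(t)$. The two-mark analysis gives $G_2^2(t) = \left(\frac{2m_1^2\mu_2 t^2}{1-\mu_2 t} + \frac{m_1^2 t}{1-\mu_2^2 t^2} + (m_1\mu_3 t^2)^2\right)G_2(t)$, from marked loops, marked chains, and pairs of one-marked mussels, where the $2$-marked mussel contribution has already been absorbed into the loop term (loops of length $\geq 2$) and $2$-marked fixed points into the chain family.

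Next I would factor out the common factor $G_2(t)$ from all three terms and collect the bracketed coefficient
\[
1 + 2m_1\mu_3 t^2 + m_1^2\mu_3^2 t^4 + \frac{m_1^2 t}{1-\mu_2^2 t^2} + \frac{2m_1^2\mu_2 t^2}{1-\mu_2 t}.
\]
Grouping the last three summands by their shared factor $m_1^2$ produces exactly $1 + 2m_1\mu_3 t^2 + m_1^2\left(\mu_3^2 t^4 + \frac{t}{1-\mu_2^2 t^2} + \frac{2\mu_2 t^2}{1-\mu_2 t}\right)$, which is the prefactor in the claimed formula; multiplying by the explicit form of $G_2(t)$ recovers the stated expression verbatim.

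Since all of the substantive work — the combinatorial enumeration of the mark-by-mark structures and the evaluation of their EGFs via Table~\ref{tab:EGFancom} — has already been carried out, there is no genuine obstacle left: the final step is a routine algebraic assembly. The only point that warrants care is bookkeeping, namely confirming that the $r=1$ and $r=2$ contributions are tallied without omission or double-counting (the factor of $4$ for mark placement in $G_2^1$, and the consistent absorption of $2$-marked mussels and $2$-marked fixed points into the loop and chain families). Those consistency checks, however, were settled in the structural analysis preceding this statement, so I would simply cite them and present the one-line summation above.
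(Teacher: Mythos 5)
Your proposal is correct and follows essentially the same route as the paper: the paper's proof likewise invokes the decomposition $F_2(t) = G_2^0(t) + G_2^1(t) + G_2^2(t)$ and factors out $G_2(t)$ to obtain the bracketed prefactor, with all substantive work deferred to the preceding mark-by-mark structural analysis. Your recollection of the three ingredients, including the absorption of $2$-marked mussels into loops of length $\geq 2$ and of $2$-marked fixed points into single-vertex chains, matches the paper's bookkeeping exactly.
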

\begin{proof}
By adding up the contributions,
\begin{equation}
\begin{split}
F_2(t) & = G_2^0(t)+G_2^1(t)+G_2^2(t) \\
& = \left[1 + 2m_1\mu_3t^2 + \left(\frac{2 m_1^2 \mu_2 t^2}{1-\mu_2 t} + \frac{m_1^2 t}{1-\mu_2^2 t^2} + (m_1\mu_3 t^2)^2\right)\right]G_2(t).
\end{split}
\end{equation}
\end{proof}
\begin{corollary}
For any distribution $X_{ij}$ with $\mu_2 = 1$,
\begin{equation}
F_2(t)= \left(1+2 m_1 \mu _3 t^2+m_1^2 \left(\mu _3^2 t^4+\frac{t}{1-t^2}+\frac{2t^2}{1-t}\right)\right) \frac{\exp\left(\frac{(\mu _4-3) t^2}{2}-t\right)}{(1-t)^2 \sqrt{1-t^2}}.
\end{equation}
\end{corollary}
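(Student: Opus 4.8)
The plan is to derive this Corollary as a direct specialization of the preceding Proposition, rather than by repeating the combinatorial decomposition of $\mathcal{G}_2^\times$ into fixed points, mussels, loops, necklaces, and their marked analogues. The Proposition furnishes a closed form for $F_2(t)$ that is valid \emph{for any} distribution of the entries $X_{ij}$, written entirely in terms of the centered moments $\mu_2,\mu_3,\mu_4$ and the mean $m_1$. The normalization $\mu_2 = 1$ imposed in the Corollary --- unit variance for the centered entries $Y_{ij} = X_{ij} - m_1$ --- is merely a restriction to a subclass of the distributions already covered, so I would simply set $\mu_2 = 1$ in the formula of the Proposition.

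Concretely, I would specialize the two factors separately. In the bracketed polynomial prefactor, putting $\mu_2 = 1$ turns $\frac{t}{1 - \mu_2^2 t^2}$ into $\frac{t}{1-t^2}$ and $\frac{2\mu_2 t^2}{1 - \mu_2 t}$ into $\frac{2t^2}{1-t}$, while the terms $2 m_1 \mu_3 t^2$ and $m_1^2 \mu_3^2 t^4$ carry over unchanged; this reproduces the bracket $1 + 2 m_1 \mu_3 t^2 + m_1^2\big(\mu_3^2 t^4 + \frac{t}{1-t^2} + \frac{2t^2}{1-t}\big)$. In the factor $G_2(t)$, the exponent $\frac{(\mu_4 - 3\mu_2^2)t^2}{2} - \mu_2 t$ becomes $\frac{(\mu_4 - 3)t^2}{2} - t$, the denominator factor $(1-\mu_2 t)^2$ becomes $(1-t)^2$, and $\sqrt{1 - \mu_2^2 t^2}$ becomes $\sqrt{1-t^2}$. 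Multiplying the two specialized factors gives exactly the claimed expression.

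I do not expect any genuine obstacle: every occurrence of $\mu_2$ in the Proposition is polynomial or rational in the symbolic variable, so the substitution $\mu_2 = 1$ is unambiguous and the computation is purely mechanical. The only point worth remarking is that such distributions genuinely exist --- any non-degenerate $X_{ij}$ can be rescaled so that its centered second moment equals $1$ --- so the Corollary is a nonvacuous special case of the Proposition rather than a new result requiring its own structural analysis.
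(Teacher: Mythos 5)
Your proposal is correct and matches the paper's (implicit) proof exactly: the corollary is stated immediately after the general Proposition precisely as its specialization to $\mu_2 = 1$, and the substitution is the purely mechanical one you describe. Nothing further is needed.
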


\subsubsection{Asymptotics}
Expanding the EGFs around their singular point and by singular asymptotics (c.f. \cite{flajolet2009analytic}),
\begin{equation}
f_2(n) = \frac{4 n^{3/2} n!}{3 \sqrt{2 \pi }} e^{\frac{1}{2} \left(\mu _4-5\right)} \left(m_1^2 n+1+2 m_1 \mu _3+\frac{m_1^2}{4} \left(43+4 \mu _3^2-10 \mu_4\right)+ O\left(\frac{1}{n}\right)\right).
\end{equation}

\section{Wigner matrices}\label{sec:Wigner}
In this section, we analyze the EGFs $H_1(t,a_1,c_1)$ and $H_2(t, a_1, a_2, c_1, c_2)$ of the characteristic correlation functions $h_1(n,a_1,c_1)$ and $h_2(n, a_1, a_2, c_1, c_2)$ for Wigner matrices $Z$. For this analysis, we use the same techniques as before with the following modification. Due to the ${c_1}Id$ and ${c_2}Id$ terms in $h_2(n, a_1, a_2, c_1, c_2) = \Exx \left[\det\left(Z + {a_1}J + {c_1}Id
\right)\det\left(Z  + {a_2}J + {c_2}Id
\right)
\right]$, tables with exactly one $i$ in column $i$ may now have nonzero weight. Thus, we modify the definition of trivial marked tables and multi-graphs as follows.
\begin{definition}
Given a marked $k \times n$ table $t$ and the corresponding marked multi-graph $\hat{t}$, we say that $t$ and $\hat{t}$ are trivial if there exist $i,j \in [n]$ such that there is exactly one unmarked edge (in either direction) between $i$ and $j$ in $\hat{t}$. Equivalently, $t$ and $\hat{t}$ are  trivial if there exist $i,j \in [n]$ such that the number of times $j$ appears in column $i$ of $t$ plus the number of times $i$ appears in column $j$ of $t$ is exactly $1$.

Otherwise, we say that $t$ and $\hat{t}$ are non-trivial.  
\end{definition}
In order to distinguish this case from our previous analysis where we had that $c_i = 0$ for all $i \in [k]$, we denote the sets of non-trivial marked tables and multi-graphs with $k$ rows/colors by $H_k^\times$ and $\mathcal{H}^\times_k$ rather than $G_k^\times$ and $\mathcal{G}^\times_k$. We have similar definitions and equations as in Section \ref{sec:markedmultigraphs}:
\begin{enumerate}
\item We define $\mathcal{H}^r_k$ to be the subset of $\mathcal{H}_k^\times$ of marked graphs which have exactly $r$ marks (when $r=0$, we sometimes omit writing $r$).
\item We define $h^r_k(n) = \sum_{\hat{t} \in \mathcal{H}_{k,n}^r} w(\hat{t}) \sgn(\hat{t})$ and its corresponding EGF $H_k^r(t) = \sum_{n=0}^\infty \frac{t^n}{n!} h^r_k(n)$.
\item We have that $H_k(t,a_1,\ldots,a_k,c_1,\ldots,c_k) = \sum_{r=0}^k H_k^r(t)$
\item As before, for any $\hat{g} \in \mathcal{H}_k$, we define the weight $W(\hat{g})$ to be $W(\hat{g}) = \sgn(\hat{g})w(\hat{g}){t^n}$ and we have that $H_k(t,a_1,\ldots,a_k,c_1,\ldots,c_k) = \sum_{\hat{g}\in \mathcal{H}^\times_k} \frac{W(\hat{g})}{|\hat{g}|!}$
\end{enumerate}
\begin{example}\label{ex:H2table}
An example of a nontrivial table $t \in H^{\times}_{2,9}$ and its corresponding nontrivial multi-graph $\hat{t} \in \mathcal{H}^{\times}_{2,9}$.
\begin{figure}[H]
\centering
\begin{minipage}{.48\textwidth}
        \centering
    \begin{tabular}{|c|c|c|c|c|c|c|c|c|}
    \hline
        1 & 2 & 3 & 4 & 5 & 6 & 7 & 8 & 9\\
    \hdashline
        3 & $\times$ & 1 & 9 & 8 & 7 & 6 & 5 & 4 \\
        3 & 6 & $\times$ & 5 & 4 & 2 & 7 & 9 & 8 \\
    \hline
    \end{tabular}
\caption{$t \in  H^2_{2,9}$ with weight $w(t) = c_2(\mu_2+\nu_2)^6(\mu_3 + \mathbf{i}\nu_3)a_1a_2 $,}
    \end{minipage}%
\hfill
\begin{minipage}{.48\textwidth}
        \centering
\begin{tikzpicture}[scale = 0.85]
\node[vertex] (1) at (-3.5,1) {$1$};
\node[vertex] (3) at (-3.5,-1) {$3$};
\node[vertex] (2) at (-1,-1) {$2$};
\node[vertex] (6) at (0,0) {$6$}; 
\node[vertex] (7) at (1,1) {$7$};
\node[vertex] (4) at (2.5,0) {$4$};
\node[vertex] (9) at (3.5,1) {$9$};
\node[vertex] (8) at (4.5,0) {$8$};
\node[vertex] (5) at (3.5,-1) {$5$};
\draw[->,dashed] (1) to [bend left=50] (3);
\draw[->] (1.-65) to [bend left=50] (3.65);
\draw[->,dashed] (3) to [bend left=50] node[fill=white,sloped]{$\!\!\!\times\!\!\!$} (1);
\draw[->] (3.115) to [bend left=50] (1.-115);
\draw[->,dashed] (9) to [bend left=20] (8);
\draw[->,dashed] (8) to [bend left=20] (9);
\draw[->] (8) to [bend left=20] (5);
\draw[->] (5) to [bend left=20] (8);
\draw[->,dashed] (5) to [bend left=20] (4);
\draw[->,dashed] (4) to [bend left=20] (5);
\draw[->] (4) to [bend left=20] (9);
\draw[->] (9) to [bend left=20] (4);
        \draw[->] (2.-70+180) arc (-130+180:125+180:0.45) node[fill=white,midway,sloped]{$\!\!\times\!\!$};
        \draw[->,dashed] (2) to [bend left=20] (6);
        \draw[->,dashed] (6) to [bend left=20] (2);
        \draw[->] (6) to [bend left=20] (7);
        \draw[->] (7) to [bend left=20] (6);
        \draw[->,dashed] (7.-70) arc (-130:125:0.45);
\end{tikzpicture}
\caption{The corresponding $\hat{t} \in \mathcal{H}^2_{2,9}$}
\label{fig:H2table}
    \end{minipage}%
\end{figure}
\end{example}
\subsection{First moment}
\subsubsection{Zero marks}
\begin{proposition}
\begin{equation}
H_1(t,0) = H_1^0(t) = e^{c_1 t-\frac12 (\mu_2+\nu_2) t^2}.
\end{equation}
\end{proposition}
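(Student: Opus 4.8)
The plan is to analyze $H_1^0(t)$ directly via the multi-graph decomposition of Section~\ref{sec:multigraphs}, adapted to Wigner matrices. The first equality $H_1(t,0) = H_1^0(t)$ is immediate: every mark in a marked multi-graph carries a factor of $a_1$, so setting $a_1 = 0$ annihilates all contributions with $r \ge 1$ marks and leaves only the zero-mark term, which is exactly $\Exx[\det(Z + c_1 Id)]$. For this term, each nontrivial graph in $\mathcal{H}_1 = \mathcal{H}_1^0$ is a single permutation $\pi \in S_n$ decomposed into disjoint cycles, so I would first classify which cycle types carry nonzero weight.

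Next I would dispose of all long cycles. A cycle of length $\ell \ge 3$ has, between each pair of consecutive vertices, exactly one unmarked edge, so by the Wigner notion of triviality such a graph is trivial and has weight $0$; equivalently, its weight is the expectation of a product of $\ell$ independent mean-zero off-diagonal entries, hence vanishes. This leaves only two structures. For a fixed point at $i$ the diagonal entry is $Z_{ii} + c_1$, whose expected contribution is $\kappa_1 + c_1 = c_1$ since $\kappa_1 = 0$; as a $1$-cycle it has positive sign, giving weight $W = c_1 t$. For a mussel, i.e.\ a $2$-cycle on $\{i,j\}$, the product of the two edge variables is $Z_{ij}Z_{ji} = Z_{ij}\overline{Z_{ij}} = (Z^{\mathrm{Re}}_{ij})^2 + (Z^{\mathrm{Im}}_{ij})^2$, so its expected weight is $\mu_2 + \nu_2$; a $2$-cycle has sign $-1$, giving weight $W = -(\mu_2+\nu_2)t^2$.

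Finally I would assemble the exponential generating function via the composition rules of Table~\ref{tab:EGFancom}. Writing $\mathcal{X}_1$ for the set of fixed points and $\mathcal{M}_1$ for the set of mussels, we have $\mathcal{H}_1^0 = \mathcal{X}_1 \star \mathcal{M}_1$, where $\mathcal{X}_1 = \textsc{Set}(\text{fixed point})$ contributes $\exp(c_1 t)$ and $\mathcal{M}_1 = \textsc{Set}(\text{mussel})$ contributes $\exp\!\left(-\tfrac12(\mu_2+\nu_2)t^2\right)$, the single-mussel class having EGF $-(\mu_2+\nu_2)t^2/2!$. Multiplying the two factors gives
\[
H_1^0(t) = e^{c_1 t}\, e^{-\frac12(\mu_2+\nu_2)t^2} = e^{\,c_1 t - \frac12(\mu_2+\nu_2)t^2},
\]
as claimed.

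The computation is routine given the machinery of Sections~\ref{sec:multigraphs}--\ref{sec:markedmultigraphs}; the only genuinely new points relative to the symmetric first moment are the surviving fixed points produced by the $c_1 Id$ term and the replacement $\mu_2 \mapsto \mu_2 + \nu_2$ in the mussel weight coming from $\Exx[Z_{ij}\overline{Z_{ij}}]$. The main thing to get right is therefore verifying these two weights from the Hermitian structure of the entries; the vanishing of cycles of length $\ge 3$ and of the $Z_{ii}$ part of a fixed point follows at once from the mean-zero assumptions $\mu_1 = \nu_1 = \kappa_1 = 0$ together with the independence of the entries.
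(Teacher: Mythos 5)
Your proof is correct and follows essentially the same route as the paper: decompose the zero-mark Wigner multi-graphs into fixed points (weight $c_1 t$, surviving via the $c_1 Id$ term) and mussels (weight $\mu_2+\nu_2$ from $\Exx[Z_{ij}\overline{Z_{ij}}]$, sign $-1$), kill all cycles of length $\ge 3$ by triviality/mean-zero independence, and multiply the two $\textsc{Set}$ EGFs. If anything, your explicit sign bookkeeping for the $2$-cycle is more careful than the paper's stated mussel weight $W=(\mu_2+\nu_2)t^2$, which omits the minus sign that its final formula $e^{c_1 t-\frac12(\mu_2+\nu_2)t^2}$ implicitly uses.
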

\begin{proof}
We have the following structures in $\mathcal{H}_1^0$:
\begin{itemize}
    \item Fixed points ($1$-cycles) with weight $W = c_1 t$.
    \item Mussels ($2$-cycles) with weight $W = (\mu_2 + \nu_2)t^2$.
\end{itemize}
Thus,
\begin{equation}
        \mathcal{H}_1^0 = \textsc{Set}\bigg{(}\,
\begin{tikzpicture}[baseline=-0.4ex, scale = 0.8]
\node[vertex] (1) at (0,0) {$1$};
        \draw[->] (1.-70+180) arc (-130+180:125+180:0.45);
\end{tikzpicture}\!\!
\bigg{)}
\star
\textsc{Set}\Bigg{(}
\begin{tikzpicture}[baseline=-0.65ex, scale = 0.8]
\node[vertex] (2) at (0,0.8) {$2$};
\node[vertex] (1) at (0,-0.8) {$1$};
\draw[->] (2) to [bend left=50] (1);
\draw[->] (1) to [bend left=50] (2);
\end{tikzpicture}
\Bigg{)}.
\end{equation}
In terms of the EGF,
\begin{equation}
H_1^0(t) = H^0_1(t) = e^{c_1 t-\frac12 (\mu_2+\nu_2) t^2}.
\end{equation}
\end{proof}

\subsubsection{One mark}
The only nontrivial marked structure in $\mathcal{H}_1^1$ is a marked fixed point ($1$-cycle with one mark) with weight $W = a_1 t$, thus
\begin{equation}
        \mathcal{H}^1_1 = \fixmual \star\mathcal{H}_1^0.
\end{equation}
In terms of the EGF,
\begin{equation}
H^1_1(t) = a_1 t\,H^0_2(t) = a_1 t \, e^{c_1 t-\frac12 (\mu_2+\nu_2) t^2}.
\end{equation}
\subsubsection{Full generating function}
\begin{proposition}
\begin{equation}
H_1(t,a_1) = (1+a_1 t) e^{c_1 t-\frac12 (\mu_2+\nu_2) t^2}.
\end{equation}
\end{proposition}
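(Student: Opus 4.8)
The plan is to obtain this closed form by simply combining the two preceding propositions via the mark-counting decomposition. Recall that the framework for Wigner matrices gives $H_k(t,a_1,\ldots,a_k,c_1,\ldots,c_k) = \sum_{r=0}^k H_k^r(t)$ (item 3 in the list of relations for $\mathcal{H}_k^\times$, the analog of Proposition \ref{prop:hatFfromhatG}). For $k=1$ this collapses to $H_1(t,a_1) = H_1^0(t) + H_1^1(t)$, so the entire statement reduces to adding the zero-mark and one-mark contributions that were already computed.

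First I would invoke the zero-mark proposition, which supplies $H_1^0(t) = e^{c_1 t - \frac{1}{2}(\mu_2+\nu_2)t^2}$ from the product of the EGFs for fixed points and mussels. Then I would invoke the one-mark proposition, which gives $H_1^1(t) = a_1 t\, H_1^0(t) = a_1 t\, e^{c_1 t - \frac{1}{2}(\mu_2+\nu_2)t^2}$; here the factor $a_1 t$ is the EGF of the unique nontrivial one-mark structure (the marked fixed point), attached by a star product to an arbitrary unmarked multigraph. Summing and factoring out the shared exponential yields
\[
H_1(t,a_1) = H_1^0(t) + H_1^1(t) = (1 + a_1 t)\, e^{c_1 t - \frac{1}{2}(\mu_2+\nu_2)t^2},
\]
which is exactly the claimed expression.

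There is no genuine obstacle here, as the substantive combinatorial work was already carried out in establishing $H_1^0$ and $H_1^1$. The only point worth stressing is that the two contributions share a common unmarked factor precisely because the one-mark structure factors as a star product of the marked fixed point with the full zero-mark structure, so $H_1^1(t)$ inherits the identical $e^{c_1 t - \frac{1}{2}(\mu_2+\nu_2)t^2}$; this is what makes the clean factorization of $(1+a_1 t)$ possible.
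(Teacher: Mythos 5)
Your proposal is correct and matches the paper's own proof, which likewise just observes $H_1(t,a_1) = H_1^0(t) + H_1^1(t)$ and substitutes the previously derived zero-mark and one-mark EGFs (with $\mathcal{H}_1^1$ factoring as the marked fixed point star $\mathcal{H}_1^0$, so $H_1^1(t) = a_1 t\, H_1^0(t)$). Your added remark about why the common exponential factor appears is a correct elaboration of exactly what the paper does implicitly.
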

\begin{proof}
Observe that $H_1(t,a_1) = H_1^0(t) + H_1^1(t) = (1+a_1 t) e^{c_1 t-\frac12 (\mu_2+\nu_2) t^2}$.    
\end{proof}

\subsection{Second moment}
\subsubsection{Zero marks}
\begin{proposition}
\begin{equation}
H_2(t,0,0) = H_2^0(t) = \frac{\exp \left(t \left(\kappa _2-2 \mu _2\right)+\frac{1}{2} t^2 \left(\mu _4-3 \nu _2^2-3\mu _2^2+\nu _4\right)-\frac{\left(\mu_2+\nu_2\right)\kappa_1^2t^2}{1+t \left(\mu _2+\nu _2\right)}\right)}{\left(1-2\mu_2t+(\mu _2^2-\nu _2^2)t^2\right)\sqrt{1-(\mu _2+\nu _2)^2 t^2}}.
\end{equation}
\end{proposition}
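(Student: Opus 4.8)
The plan is to reuse the symbolic method of Flajolet and Sedgewick \cite{flajolet2009analytic} exactly as in the symmetric zero-mark computation of $G_2(t)$: decompose every nontrivial graph in $\mathcal{H}_2^0$ into connected components, compute the EGF of each component type, and multiply (since $\mathcal{H}_2^0$ is the star product of the component families). Because $a_1=a_2=0$ there are no marked edges, but since the diagonal entries carry a second moment $\kappa_2$ and a (possibly nonzero) mean $\kappa_1$, and since for an off-diagonal pair the two orientations of a double cycle now carry \emph{different} weights, the list of component types is richer than in the symmetric case.

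First I would record the relevant Wigner weights. A solid/dashed pair of parallel edges on a pair $\{i,j\}$ with the \emph{same} orientation uses $Z_{ij}^2$ and contributes $\Exx[Z_{ij}^2]=\mu_2-\nu_2$, whereas with \emph{opposite} orientations it uses $|Z_{ij}|^2$ and contributes $\mu_2+\nu_2$; a monochromatic $2$-cycle on $\{i,j\}$ contributes $\Exx[|Z_{ij}|^2]=\mu_2+\nu_2$; a doubled $2$-cycle (mussel) contributes $\Exx[|Z_{ij}|^4]=\mu_4+2\mu_2\nu_2+\nu_4$; a full fixed point (solid and dashed loop at one vertex) uses $Z_{ii}^2$ and contributes $\kappa_2$; and a single-color loop at a vertex whose other color continues into the graph contributes the diagonal mean $\kappa_1$. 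With these the four symmetric-type families transcribe directly: full fixed points give $\exp(\kappa_2 t)$; mussels give $\exp(\tfrac12(\mu_4+2\mu_2\nu_2+\nu_4)t^2)$; loops split into two orientation classes, giving a product of $\textsc{Set}(\textsc{Cyc}_{\ge 3})$ factors with per-step weights $\mu_2-\nu_2$ and $\mu_2+\nu_2$, namely $\frac{\exp(-2\mu_2 t-(\mu_2^2+\nu_2^2)t^2)}{1-2\mu_2 t+(\mu_2^2-\nu_2^2)t^2}$ after using $(1-(\mu_2-\nu_2)t)(1-(\mu_2+\nu_2)t)=1-2\mu_2 t+(\mu_2^2-\nu_2^2)t^2$; and necklaces give $\frac{\exp(-\tfrac12(\mu_2+\nu_2)^2t^2)}{\sqrt{1-(\mu_2+\nu_2)^2t^2}}$, exactly as in $\mathcal{G}_2$ with $\mu_2$ replaced by $\mu_2+\nu_2$. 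Summing these three exponents already reproduces the linear term $\kappa_2-2\mu_2$, the quadratic term $\tfrac12(\mu_4-3\mu_2^2-3\nu_2^2+\nu_4)$, and the entire denominator.

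The genuinely new family, and the step I expect to be the main obstacle, is the one created by the diagonal mean $\kappa_1$. Under the modified notion of triviality a single-color loop is no longer forbidden, so there are connected components consisting of an alternating chain of monochromatic $2$-cycles capped at each end by a single-color loop: each end loop contributes $\kappa_1$, each of the $m$ links contributes $\mu_2+\nu_2$, and the chain has $m+1$ vertices. The delicate points are (i) the degree/color bookkeeping forcing the two capping loops onto distinct vertices, hence a factor $\kappa_1^2$ (the single-vertex $m=0$ case coincides with a full fixed point and is already counted, so these chains start at $m\ge1$), and (ii) the sign: counting monochromatic cycles gives $\sgn=(-1)^m$ for a chain with $m$ links, so each labelled chain has signed weight $(-1)^m(\mu_2+\nu_2)^m\kappa_1^2\,t^{m+1}$. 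I would assemble these via an endpoint--$\textsc{Seq}$--endpoint construction, dividing by $2$ for the reversal symmetry and summing the two color phases, to obtain $\sum_{m\ge1}(-1)^m(\mu_2+\nu_2)^m\kappa_1^2 t^{m+1}=-\frac{(\mu_2+\nu_2)\kappa_1^2 t^2}{1+(\mu_2+\nu_2)t}$, which supplies the remaining exponential factor.

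Multiplying the five EGFs and collecting terms then yields the stated closed form for $H_2^0(t)$. The two places where errors are most likely are the orientation split of the loop weights into $\mu_2\pm\nu_2$ and the sign/symmetry accounting for the $\kappa_1$-chain, so I would sanity-check both against the specialization $\kappa_1=0,\ \nu_2=0$, under which the formula must collapse to the symmetric $G_2(t)$ of the previous section.
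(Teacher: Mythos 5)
Your proof is correct and follows essentially the same route as the paper: the identical component taxonomy (fixed points, mussels, loops split by orientation into $\mu_2\pm\nu_2$ classes, necklaces, and diagonal-capped chains), with the connected EGFs assembled by the star-product/\textsc{Set} construction, and all of your weights, signs, and the final exponent/denominator bookkeeping check out. The only difference is cosmetic: the paper's own proof keeps $\kappa_1=0$ and tracks the identity shifts $c_1,c_2$ separately, splitting your capped-chain family into ``chains'' ($n$ odd, weight $c_1c_2(\mu_2+\nu_2)^{n-1}t^n$) and ``dumbbells'' ($n$ even, weight $-\tfrac12(c_1^2+c_2^2)(\mu_2+\nu_2)^{n-1}t^n$), and the stated $\kappa_1$-formula is the specialization $c_1=c_2=\kappa_1$ of its conclusion --- which your merged alternating series $\sum_{m\ge 1}(-1)^m(\mu_2+\nu_2)^m\kappa_1^2 t^{m+1}=-\frac{(\mu_2+\nu_2)\kappa_1^2 t^2}{1+(\mu_2+\nu_2)t}$ reproduces exactly.
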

\begin{proof}
For $\mathcal{H}_2^0$, we have the following types of structures:
\begin{itemize}
    \item Fixed points (two $1$-cycles on the same element) with weight $W = (c_1c_2+\kappa_2) t$
    \item Mussels (two $2$-cycles on two elements) with weight $W = (\mu_4+2\mu_2\nu_2+\nu _4) t^2$
    \item Loops (double cycles on $n\geq 3$ vertices) with weight $W = (\mu_2+\nu_2)^n t^n$ if the arrows have the opposite direction and $W = (\mu_2-\nu_2)^n t^n$ if they have the same direction
    \item Necklaces (closed chains of alternating dashed/undashed $2$-cycles on $n$ vertices, $n \geq 4$ even) with weight $W = (\mu_2+\nu_2)^n t^n$.
    \item Chains (sequences of alternating dashed/undashed $2$-cycles on $n$ vertices with endpoints being $1$-cycles, $n\geq 3$ odd) with weight $W = (\mu_2+\nu_2)^{n-1} c_1c_2 t^n$
    \item Dumbbells (sequences of alternating dashed/undashed $2$-cycles on $n$ vertices with endpoints being $1$-cycles of same type, $n\geq 2$ even) with weight $W = -(\mu_2+\nu_2)^{n-1} c_1^2 t^n$ if those $1$-cycles are undashed. If they are dashed, we simply replace $c_1^2$ by $c_2^2$.
\end{itemize}
Representatives of each type are shown in Table \ref{tab:typesH20}.
\begin{table}[H]
    \centering
\begin{tabular}{cccccc}
        \begin{tikzpicture}[baseline = -1.3ex,scale = 0.85]
        \node[vertex] (1) at (-2.5,1) {$1$};
        \draw[->] (1.-60) arc (180+45+20:540-45-15:0.45);
        \draw[->,dashed] (1.-90) arc (180+45:540-45:0.6);
        \end{tikzpicture}
     &
        \begin{tikzpicture}[scale = 0.85]
        \node[vertex] (1) at (-2.5,1) {$1$};
        \node[vertex] (2) at (-2.5,-1) {$2$};
        \draw[->] (1.-65) to [bend left=50] (2.65);
        \draw[->,dashed] (2) to [bend left=50] (1);
        \draw[->] (2.115) to [bend left=50] (1.-115);
        \draw[->,dashed] (1) to [bend left=50] (2);
        \end{tikzpicture}
     &
        \begin{tikzpicture}[scale = 0.85]
        \node[vertex] (1) at (-1,0.8) {$1$};
        \node[vertex] (2) at (1,0.8) {$2$}; 
        \node[vertex] (3) at (0,-1) {$3$};
        \draw[->] (2.182) to [bend left=-40] (1.-002);
        \draw[->,dashed] (1) to [bend left=35] (2);
        \draw[->,dashed] (2) to [bend left=40] (3);
        \draw[->] (3.55) to [bend left=-40] (2.250);
        \draw[->,dashed] (3) to [bend left=40] (1);
        \draw[->] (1.290) to [bend left=-40] (3.130);
        \end{tikzpicture}      
     &
        \begin{tikzpicture}[scale = 0.85]
        \node[vertex] (4) at (3,1) {$4$};
        \node[vertex] (3) at (4,0) {$3$};
        \node[vertex] (2) at (3,-1) {$2$};
        \node[vertex] (1) at (2,0) {$1$};
        \draw[->,dashed] (4) to [bend left=20] (3);
        \draw[->,dashed] (3) to [bend left=20] (4);
        \draw[->] (3) to [bend left=20] (2);
        \draw[->] (2) to [bend left=20] (3);
        \draw[->,dashed] (2) to [bend left=20] (1);
        \draw[->,dashed] (1) to [bend left=20] (2);
        \draw[->] (1) to [bend left=20] (4);
        \draw[->] (4) to [bend left=20] (1);
        \end{tikzpicture}
    &
 \begin{tikzpicture}[scale = 0.85]
        \node[vertex] (1) at (0,0) {$1$};
        \node[vertex] (2) at (1.5,1) {$2$};
        \node[vertex] (3) at (3,2) {$3$};
        \draw[->] (1.-70+180) arc (-130+180:125+180:0.45);
        \draw[->,dashed] (1) to [bend left=20] (2);
        \draw[->,dashed] (2) to [bend left=20] (1);
        \draw[->] (2) to [bend left=20] (3);
        \draw[->] (3) to [bend left=20] (2);
        \draw[->,dashed] (3.-70) arc (-130:125:0.45);
        \end{tikzpicture}
        &
 \begin{tikzpicture}[scale = 0.85]
        \node[vertex] (1) at (0,0) {$1$};
        \node[vertex] (2) at (0,1.5) {$2$};
        \draw[->] (1.200) arc (-210:35:0.45);
        \draw[->,dashed] (1) to [bend left=30] (2);
        \draw[->,dashed] (2) to [bend left=30] (1);
        \draw[->] (2.10) arc (-30:215:0.45);
        \end{tikzpicture}
        \\
     Fixed point & Mussel & Loop & Necklace & Chain & Dumbbell
        \\
        $\mathcal{X}_2$
        & $\mathcal{M}_2$
        & $\mathcal{L}_2$
        & $\mathcal{N}_2$
        & $\mathcal{C}_2$
        & $\mathcal{D}_2$
\end{tabular}
    \caption{Types of structures in $\mathcal{H}_2$}
    \label{tab:typesH20}
\end{table}

\paragraph{Fixed points} Let $\mathcal{X}_2$ be the set of fixed points in $\mathcal{H}_2^0$. Then
\begin{equation}
    \mathcal{X}_2 = \textsc{Set} \bigg{(}\!\!\!
        \begin{tikzpicture}[baseline=3.7ex,scale = 0.85]
        \node[vertex] (1) at (-2.5,1) {$1$};
        \draw[->] (1.-60) arc (180+45+20:540-45-15:0.45);
        \draw[->,dashed] (1.-90) arc (180+45:540-45:0.6);
        \end{tikzpicture}
\bigg{)}.
\end{equation}
In terms of the EGF, this gives the factor of $\exp((c_1c_2+\kappa_2) t)$.

\paragraph{Mussels}
Next, let $\mathcal{M}_2$ be the set of mussels in $\mathcal{H}_2^0$. Then
\begin{equation}
    \mathcal{M}_2 = \textsc{Set}\Bigg{(}
        \begin{tikzpicture}[baseline=-0.65ex,scale = 0.85]
        \node[vertex] (1) at (-2.5,1) {$1$};
        \node[vertex] (2) at (-2.5,-1) {$2$};
        \draw[->] (1.-65) to [bend left=50] (2.65);
        \draw[->,dashed] (2) to [bend left=50] (1);
        \draw[->] (2.115) to [bend left=50] (1.-115);
        \draw[->,dashed] (1) to [bend left=50] (2);
        \end{tikzpicture}
\Bigg{)}.
\end{equation}
The EGF of the inner factor is $(\mu_4+2\mu_2\nu_2+\nu _4) t^2/2!$ so the EGF of $\mathcal{M}_2$ is $\exp(\tfrac12 (\mu_4+2\mu_2\nu_2+\nu _4) t^2)$.

\paragraph{Loops} Let $\mathcal{L}_2$ be the set of loops in $\mathcal{H}_2^0$. Note that $\mathcal{L}_2$ includes both possible orientations for the arrows. We have that 
\begin{equation}
    \mathcal{L}_2 = \textsc{Set} \bigg{(} \textsc{Cyc}_{\geq 3} \bigg{(} \oaloodup \!\!\!\!\!\!
\bigg{)}\bigg{)} \star
\textsc{Set} \bigg{(} \textsc{Cyc}_{\geq 3} \bigg{(} \oalooduo \!\!\!\!\!\!
\bigg{)}\bigg{)}.
\end{equation}
The corresponding EGF of $\mathcal{L}_2$ is equal to
\begin{equation*}
\textstyle\exp\left(-(\mu_2\!-\!\nu_2)t \!-\! \frac{(\mu_2\!-\!\nu_2)^2 t^2}{2} \!+\! \ln\left(\frac{1}{1\!-\!(\mu_2\!-\!\nu_2) t}\right)\right)\exp\left(-(\mu_2\!+\!\nu_2)t \!-\! \frac{(\mu_2\!+\!\nu_2)^2 t^2}{2} \!+\! \ln\left(\frac{1}{1\!-\!(\mu_2\!+\!\nu_2) t}\right)\right)
\end{equation*}
or after simplification,
\begin{equation*}
\frac{e^{-2\mu_2t-\left(\mu_2^2+\nu_2^2\right)t^2}}{1-2\mu_2 t+\left(\mu _2^2-\nu _2^2\right) t^2}.
\end{equation*}

\paragraph{Necklaces} For the set of necklaces $\mathcal{N}_2$,
\begin{equation}
    \mathcal{N}_2 = \textsc{Set} \Bigg{(} \frac12 \,  \textsc{Cyc}_{\geq 2} \Bigg{(}
    \obchauab\!\!\!\!\!\!
    + \!\!\! \obchauba \!\!\!\
\Bigg{)}\Bigg{)}.
\end{equation}
Hence, its EGF is
\begin{equation}
\exp\left(\frac12\left(-2(\mu_2+\nu_2)^2\frac{t^2}{2!}+\ln\left(\frac{1}{1-2(\mu_2+\nu_2)^2\frac{t^2}{2!}}\right)\right)\right) = \frac{e^{-\frac12 (\mu_2+\nu_2)^2 t^2}}{\sqrt{1-(\mu_2+\nu_2)^2 t^2}}.   
\end{equation}

\paragraph{Chains} Let $\mathcal{C}_2$ denote the set of all chains. We can write
\begin{equation}
    \mathcal{C}_2 =
        \textsc{Set} \Bigg{(} \ofixl \!\!     
\star  \textsc{Seq}_{\geq 1} \Bigg{(}
\obchadab\!\!\!\!\!\! + \!\!\!\obchadba\!\!\! \Bigg{)} \star \fixdar \Bigg{)}.
\end{equation}
In terms of the EGF,
\begin{equation*}
\exp\left[c_1\left(-1+\frac{1}{1-2(\mu_2+\nu_2)^2 \frac{t^2}{2!}}\right) c_2 t \right]= \exp \left(\frac{c_1 c_2 \left(\mu _2+\nu _2\right){}^2 t^3}{1-\left(\mu _2+\nu _2\right){}^2 t^2}\right).
\end{equation*}

\paragraph{Dumbbells} There are two types of dumbbells as the endpoints are either undashed $1$-cycles or dashed $1$-cycles. Dumbbells are different from chains as they are symmetric and have the opposite sign and parity of the number of vertices. Hence, we have that for the set $\mathcal{D}_2$ of all dumbbells,
\begin{equation}
\begin{split}    
    \mathcal{D}_2 = \,\, &
        \textsc{Set} \Bigg{(}\,\frac12\,\,\,
        \oachaua \,\,     
        \star  \textsc{Seq} \Bigg{(}
        \obchauab \!\!\!\!\!\! +
        \!\!\!\obchauba\!\!\!\
        \Bigg{)}
        \star\,\, \fixuap \, \Bigg{)}
    \\
        \star \,\, & \textsc{Set} \Bigg{(}\,\frac12\,\,\, \oachada \,\,     
        \star  \textsc{Seq} \Bigg{(}
        \obchadab \!\!\!\!\!\!
        + \!\!\! \obchadba \!\!\!\
        \Bigg{)}
        \star\,\,
        \fixdap \, \Bigg{)}.
    \end{split}
    \end{equation}
Its EGF is thus equal to
\begin{equation}
\begin{split}
&\left[\exp\left(-\frac12 c_1 (\mu_2+\nu_2)t\left(\frac{1}{1-2(\mu_2+\nu_2)^2 \frac{t^2}{2!}}\right) c_1 t\right)\right]\\
&\left[\exp\left(-\frac12 c_2 (\mu_2+\nu_2)t\left(\frac{1}{1-2(\mu_2+\nu_2)^2 \frac{t^2}{2!}}\right) c_2 t\right)\right]\\
& = \exp\left(-\,\frac{\frac12 (c_1^2+c_2^2)(\mu_2+\nu_2) t^2}{1-(\mu_2+\nu_2)^2t^2}\right).
\end{split}
\end{equation}

\paragraph{Conclusion}
As $\mathcal{H}_2^0 = \mathcal{X}_2 \star \mathcal{M}_2 \star \mathcal{L}_2 \star \mathcal{N}_2 \star \mathcal{C}_2 \star \mathcal{D}_2$, by multiplying the EGFs we obtain that
\begin{equation*}
H_2^0(t)= \frac{\exp \left(t \left(c_1 c_2\!+\!\kappa_2\!-\!2\mu_2\right)+\frac{1}{2} t^2 \left(\mu_4\!-\!3 \nu_2^2\!-\!3 \mu_2^2\!+\!\nu_4\right)-\frac{t^2}{2}\frac{(c_1^2+c_2^2) \left(\mu_2+\nu_2\right)}{1-t^2 \left(\mu _2+\nu _2\right){}^2}+\frac{c_1 c_2
   \left(\mu _2+\nu _2\right){}^2 t^3}{1-\left(\mu _2+\nu _2\right){}^2 t^2}\right)}{\left(1-2 \mu _2 t+\left(\mu _2^2-\nu _2^2\right)
   t^2\right) \sqrt{1-t^2 \left(\mu _2+\nu _2\right){}^2}}.
\end{equation*}
\end{proof}

\subsubsection{One mark}
A structure with one mark can be
\begin{itemize}
    \item A marked mussel (double $2$-loop with one arrow marked) where $W$ depends on both which type of arrow was dashed \emph{and} also if the marked arrow points from the smaller to the bigger number or the other way around. Overall, $W$ can attain one of the four values $a_1(\mu_3 \pm  \mathbf{i}\nu_3)t^2$ and $a_2(\mu_3 \pm \mathbf{i}\nu_3)t^2$
    \item A marked chain (alternating dashed/undashed $2$-cycle on an odd $n\geq 1$ number of vertices, provided we attach one marked $1$-cycle and one unmarked $1$-cycle to the endpoints) with weight $W = a_1 \kappa_1(\mu_2+\nu_2)^{n-1} t^n$ or $W = a_2 \kappa_1(\mu_2+\nu_2)^{n-1} t^n$ depending on which type of arrow is marked.
    \item A marked dumbbell (sequence alternating dashed/undashed $2$-cycles on $n$ vertices with endpoints being $1$-cycles of same type, $n\geq 2$ even, with one mark at the endpoint) with weight $W = -a_1 \kappa_1 (\mu_2+\nu_2)^{n-1} t^n$ or $W = -a_2 \kappa_1 (\mu_2+\nu_2)^{n-1} t^n$.
\end{itemize}
\begin{table}[H]
    \centering
\begin{tabular}{ccc}
        \begin{tikzpicture}[scale = 0.85]
        \node[vertex] (1) at (-2.5,1) {$1$};
        \node[vertex] (2) at (-2.5,-1) {$2$};
        \draw[->] (1.-65) to [bend left=50] node[fill=white,sloped]{$\!\!\!\times\!\!\!$} (2.65);
        \draw[->,dashed] (2) to [bend left=50] (1);
        \draw[->] (2.115) to [bend left=50] (1.-115);
        \draw[->,dashed] (1) to [bend left=50] (2);
        \end{tikzpicture}
    &
        \begin{tikzpicture}[scale = 0.85]
        \node[vertex] (1) at (0,0) {$1$};
        \node[vertex] (2) at (1.3,1) {$2$};
        \node[vertex] (3) at (2.6,2) {$3$};
        \draw[->] (1.-70+180) arc (-130+180:125+180:0.45) node[fill=white,midway,sloped]{$\!\!\times\!\!$};
        \draw[->,dashed] (1) to [bend left=20] (2);
        \draw[->,dashed] (2) to [bend left=20] (1);
        \draw[->] (2) to [bend left=20] (3);
        \draw[->] (3) to [bend left=20] (2);
        \draw[->,dashed] (3.-70) arc (-130:125:0.45);
        \end{tikzpicture}
    &
        \begin{tikzpicture}[scale = 0.85]
        \node[vertex] (1) at (0,0) {$1$};
        \node[vertex] (2) at (0,1.5) {$2$};
        \draw[->] (1.200) arc (-210:35:0.45) node[fill=white,midway,sloped]{$\!\!\times\!\!$};
        \draw[->,dashed] (1) to [bend left=30] (2);
        \draw[->,dashed] (2) to [bend left=30] (1);
        \draw[->] (2.10) arc (-30:215:0.45);
        \end{tikzpicture}
    \\
     $1$-Marked mussel & $1$-Marked chain & $1$-Marked dumbbell
        \\
        $\mathcal{M}_2^1$
        & $\mathcal{C}_2^1$
        & $\mathcal{D}_2^1$
\end{tabular}
    \caption{Types of structures in $\mathcal{H}_2^1$ with one mark}
    \label{tab:typesH21}
\end{table}
\begin{remark}
Here, we included $1$-marked fixed points (one marked and one unmarked $1$-cycle) into $1$-marked chains as they have the same weight.
\end{remark}

\paragraph{Marked mussels}
Next, let $\mathcal{M}_2^1$ be the set of $1$-marked mussels. By adding up the contribution for the four different possibilities for which edge is marked, that is
\begin{equation}
\mathcal{M}_2^1 = \musmuab\, + \, \musmuba\, +\, \musmdab + \musmdba,
\end{equation}
we obtain $2 (a_1 + a_2) \mu_3 t^2/2!$ for its EGF.

\paragraph{Marked chains} For the set $\mathcal{C}_2^1$ of chains with one mark, we write
\begin{equation}
\begin{split}
        \mathcal{C}_2^1 = \, & \ofixmul\!\! \star \textsc{Seq} \Bigg{(} \obchadab\!\!\!\!\!\! + \!\!\!\obchadba\!\!\!\ \Bigg{)} \star \fixdar.
    \\
        + \, & \ofixmdl \!\! \star  \textsc{Seq} \Bigg{(} \obchauab\!\!\!\!\!\! + \!\!\!\obchauba\!\!\!\ \Bigg{)} \star \fixuar.
\end{split}
    \end{equation}
In terms of the EGF,
\begin{equation*}
a_1\left(\frac{1}{1-2(\mu_2+\nu_2)^2 \frac{t^2}{2!}}\right) c_2 t + a_2\left(\frac{1}{1-2(\mu_2+\nu_2)^2 \frac{t^2}{2!}}\right) c_1 t=  \frac{(a_1 c_2+a_2 c_1) t}{1-(\mu_2+\nu_2)^2 t^2}.
\end{equation*}

\paragraph{Marked dumbbells} For the set $\mathcal{D}_2^1$ of all dumbbells with one mark,
\begin{equation}
\begin{split}
        \mathcal{D}_2^1 = \,\, & \oachamua \,\,\, \star \, \textsc{Seq} \Bigg{(} \obchauab\!\!\!\!\!\! + \!\!\!\obchauba\!\!\! \Bigg{)} \star\,\, \fixuap \\
        + \,\, & \oachamda\,\,\, \star \, \textsc{Seq} \Bigg{(} \obchadab\!\!\!\!\!\! + \!\!\!\obchadba\!\!\! \Bigg{)} \star\,\, \fixdap.
    \end{split}
    \end{equation}
Its EGF is thus equal to
\begin{equation}
\begin{split}
    & -a_1 (\mu_2+\nu_2)t\left(\frac{1}{1-2(\mu_2+\nu_2)^2 \frac{t^2}{2!}}\right) c_1 t -a_2 (\mu_2+\nu_2)t\left(\frac{1}{1-2(\mu_2+\nu_2)^2 \frac{t^2}{2!}}\right) c_2 t \\
    & = -\,\frac{(\mu_2+\nu_2)(a_1c_1+a_2c_2) t^2}{1-(\mu_2+\nu_2)^2t^2}.
\end{split}
\end{equation}
Note that the sign is negative as the total number of $2$-cycles contained in a dumbbell (of either color) is odd.
\paragraph{Conclusion}
Since $\mathcal{H}_2^1 = (\mathcal{M}^1_2 + \mathcal{C}^1_2 + \mathcal{D}^1_2) \star \mathcal{H}_2^0$, we obtain that
\begin{equation}
H_2^1(t)= \left((a_1 + a_2) \mu_3 t^2+\frac{(a_1c_2+a_2c_1) t}{1-(\mu_2+\nu_2)^2 t^2} -\frac{(\mu_2+\nu_2)(a_1c_1+a_2c_2) t^2}{1-(\mu_2+\nu_2)^2t^2}\right) H_2^0(t).
\end{equation}

\subsubsection{Two marks}
The following Table \ref{tab:typesH22} shows representatives of all types of the remaining (connected) structures having exactly two marks. A structure with two marks can be
\begin{itemize}
    \item A marked loop (double cycle on $n\geq 2$ vertices with two marks) with weight $W = a_1 a_2(\mu_2 \pm \nu_2)^{n-1} t^n$ (plus sign if the arrows have the opposite direction and minus sign when they have the same direction)
    \item A marked chain (alternating dashed/undashed $2$-cycle on an odd number $n \geq 1$ of vertices, provided we attach two marked $1$-cycles to the endpoints) with weight $W = a_1 a_2(\mu_2+\nu_2)^{n-1} t^n$.
\end{itemize}
\begin{table}[H]
    \centering
\begin{tabular}{cc}
        \begin{tikzpicture}[scale = 0.85]
        \node[vertex] (1) at (-1,0.8) {$1$};
        \node[vertex] (2) at (1,0.8) {$2$}; 
        \node[vertex] (3) at (0,-1) {$3$};
        \draw[->] (2.182) to [bend left=-40] (1.-002);
        \draw[->,dashed] (1) to [bend left=35] node[fill=white,sloped]{$\!\!\!\times\!\!\!$} (2);
        \node[vertex] (x) at (0,1) {$\times$};
        \draw[->,dashed] (2) to [bend left=40] (3);
        \draw[->] (3.55) to [bend left=-40] (2.250);
        \draw[->,dashed] (3) to [bend left=40] (1);
        \draw[->] (1.290) to [bend left=-40] (3.130);
        \end{tikzpicture}    
    &
        \begin{tikzpicture}[scale = 0.85]
        \node[vertex] (1) at (0,0) {$1$};
        \node[vertex] (2) at (1.5,1) {$2$};
        \node[vertex] (3) at (3,2) {$3$};
        \draw[->] (1.-70+180) arc (-130+180:125+180:0.45) node[fill=white,midway,sloped]{$\!\!\times\!\!$};
        \draw[->,dashed] (1) to [bend left=20] (2);
        \draw[->,dashed] (2) to [bend left=20] (1);
        \draw[->] (2) to [bend left=20] (3);
        \draw[->] (3) to [bend left=20] (2);
        \draw[->,dashed] (3.-70) arc (-130:125:0.45) node[fill=white,midway,sloped]{$\!\!\times\!\!$};
        \end{tikzpicture}
    \\
     $2$-Marked loop & $2$-Marked chain
        \\
        $\mathcal{L}_2^2$
        & $\mathcal{C}_2^2$
\end{tabular}
    \caption{Types of structures in $\mathcal{H}_2^2$ with two marks}
    \label{tab:typesH22}
\end{table}

\begin{remark}
There are no marked dumbbells as they would violate the assumption that there is at most one mark per one type of arrow. There are, however, $2$-marked fixed points (two marked $1$-cycle), but they are again counted as $2$-marked chains with one vertex. Similarly, $2$-marked mussels (two $2$-cycles with two marks) are counted as $2$-marked loops with two vertices.
\end{remark}

\paragraph{Marked loops} The set of marked loops $\mathcal{L}_2^2$ is created as follows
\begin{equation}
        \mathcal{L}^2_2 = \bigg{(}\oaloomdmup\!\!\!\!\!\! \star \textsc{Seq}_{\geq 1} \bigg{(}
        \oaloodup \!\!\!\!\!\!
        \bigg{)}\bigg{)}
    +
        \bigg{(}\oaloomdmuo\!\!\!\!\!\! \star \textsc{Seq}_{\geq 1} \bigg{(} \oalooduo
        \!\!\!\!\!\!
        \bigg{)}\bigg{)}.
    \end{equation}
In terms of the EGF,
\begin{equation*}
a_1a_2 t \left(-1+\frac{1}{1-(\mu_2-\nu_2) t}\right) + a_1a_2 t \left(-1+\frac{1}{1-(\mu_2+\nu_2) t}\right)
= \frac{2 a_1 a_2 t^2 \left(\mu_2-\mu_2^2 t+\nu_2^2 t\right)}{1-2\mu_2t+(\mu_2^2-\nu_2^2)t^2}.
\end{equation*}

\paragraph{Marked chains} Letting $\mathcal{C}^2_2$ be the set of all marked chains, we can write
\begin{equation}
    \mathcal{C}^2_2 =
        \ofixmul \!\!     
\star  \textsc{Seq} \Bigg{(}
\obchadab \!\!\!\!\!\! + \!\!\!\obchadba\!\!\!\
\Bigg{)}
\star \fixmdar.
    \end{equation}
In terms of the EGF,
\begin{equation*}
a_1\left(\frac{1}{1-2(\mu_2+\nu_2)^2 \frac{t^2}{2!}}\right) a_2 t =  \frac{a_1a_2 t}{1-(\mu_2+\nu_2)^2 t^2}.
\end{equation*}

\paragraph{Pair of one-marked structures} 
Finally, let $\mathcal{P}_2^2$ be the set of pairs of structures where one structure has an undashed arrow marked and the other structure has a dashed arrow marked. In other words,
\begin{equation}
\begin{split}
\mathcal{P}_2^2 = \,\,\, &
    \Bigg{[}\, \musmuab \,\, + \,\, \musmuba + \ofixmul\!\! \star \textsc{Seq} \Bigg{(} \obchadab\!\!\!\!\!\! + \!\!\!\obchadba\!\!\!\ \Bigg{)} \star \fixdar
    \\
    \,\, & + \oachamua \,\,\, \star \, \textsc{Seq} \Bigg{(} \obchauab\!\!\!\!\!\! + \!\!\!\obchauba\!\!\! \Bigg{)} \star\,\, \fixuap \Bigg{]}\\
    \star \, \, & \Bigg{[}\, \musmdab \,\, + \musmdba + \ofixmdl \!\! \star  \textsc{Seq} \Bigg{(} \obchauab\!\!\!\!\!\! + \!\!\!\obchauba\!\!\!\ \Bigg{)} \star \fixuar
    \\
    \,\, & + \oachamda\,\,\, \star \, \textsc{Seq} \Bigg{(} \obchadab\!\!\!\!\!\! + \!\!\!\obchadba\!\!\! \Bigg{)} \star\,\, \fixdap \Bigg{]}.
    \end{split}
\end{equation}
For its EGF, we have
\begin{equation*}
\begin{split}
&\left[a_1 \mu_3 t^2 +a_1\left(\frac{1}{1-2(\mu_2+\nu_2)^2 \frac{t^2}{2!}}\right) c_2 t -a_1 (\mu_2+\nu_2)t\left(\frac{1}{1-2(\mu_2+\nu_2)^2 \frac{t^2}{2!}}\right) c_1 t \right]\\
& \left[a_2 \mu_3 t^2 + a_2\left(\frac{1}{1-2(\mu_2+\nu_2)^2 \frac{t^2}{2!}}\right) c_1 t -a_2 (\mu_2+\nu_2)t\left(\frac{1}{1-2(\mu_2+\nu_2)^2 \frac{t^2}{2!}}\right) c_2 t\right],
\end{split}
\end{equation*}
which after simplification is equal to
\begin{equation*}
    a_1a_2\left(\mu_3 t^2\!+\!\frac{c_2 t}{1\!-\!\left(\mu_2\!+\!\nu_2\right){}^2 t^2}\!-\!\frac{c_1 \left(\mu _2+\nu _2\right) t^2}{1\!-\!\left(\mu _2\!+\!\nu
    _2\right){}^2 t^2}\right) \left(\mu_3 t^2\!+\!\frac{c_1 t}{1\!-\!\left(\mu_2\!+\!\nu_2\right){}^2 t^2}\!-\!\frac{c_2 \left(\mu _2+\nu _2\right)
    t^2}{1\!-\!\left(\mu _2\!+\!\nu _2\right){}^2 t^2}\right).
\end{equation*}

\paragraph{Conclusion}
Adding the contributions together, since $\mathcal{H}_2^2 = (\mathcal{L}_2^2+\mathcal{C}_2^2+\mathcal{P}_2^2) \star \mathcal{H}_2^0$, we obtain that
\begin{align*}
H_2^2(t) =  a_1 a_2 & \left[ \frac{2t^2 \left(\mu_2-\mu_2^2 t+\nu_2^2 t\right)}{1-2\mu_2t+(\mu_2^2-\nu_2^2)t^2}+\frac{t}{1-(\mu_2+\nu_2)^2 t^2}+\left(\mu_3 t^2 + \frac{c_2 t}{1\!-\!\left(\mu_2\!+\!\nu_2\right){}^2 t^2} \right.\right. \\
& \left.\left. \,\, - \,\frac{c_1 \left(\mu _2+\nu _2\right) t^2}{1\!-\!\left(\mu _2\!+\!\nu_2\right){}^2 t^2}\right) \left(\mu_3 t^2\!+\!\frac{c_1 t}{1\!-\!\left(\mu_2\!+\!\nu_2\right){}^2 t^2}\!-\!\frac{c_2 \left(\mu _2+\nu _2\right) t^2}{1\!-\!\left(\mu _2\!+\!\nu _2\right){}^2 t^2}\right)\right]\!H_2^0(t).
\end{align*}

\subsubsection{Full generating function}
\begin{corollary}
Adding up the contributions, we obtain that for any distribution $X_{ij}$, $Y_{ij}$ and $Z_{ii}$,
\begin{align*}
& H_2(t,a_1,a_2) = H_2^0(t)+H_2^1(t)+H_2^2(t) = \Bigg{[}1+(a_1 + a_2) \mu_3 t^2+\frac{(a_1c_2+a_2c_1) t}{1-(\mu_2+\nu_2)^2 t^2} \\
& - \left. \frac{(\mu_2+\nu_2)(a_1c_1+a_2c_2) t^2}{1-(\mu_2+\nu_2)^2t^2} + a_1 a_2 \Bigg{(}\frac{2t^2 \left(\mu_2-\mu_2^2 t+\nu_2^2 t\right)}{1-2\mu_2t+(\mu_2^2-\nu_2^2)t^2}+\frac{t}{1-(\mu_2+\nu_2)^2 t^2} + \bigg{(}\mu_3 t^2 \right.\\
& + \frac{c_2 t}{1\!-\!\left(\mu_2\!+\!\nu_2\right){}^2 t^2} - \frac{c_1 \left(\mu _2+\nu _2\right) t^2}{1\!-\!\left(\mu _2\!+\!\nu_2\right){}^2 t^2}\bigg{)} \bigg{(}\mu_3 t^2\!+\!\frac{c_1 t}{1\!-\!\left(\mu_2\!+\!\nu_2\right){}^2 t^2}\!-\!\frac{c_2 \left(\mu _2+\nu _2\right) t^2}{1\!-\!\left(\mu _2\!+\!\nu _2\right){}^2 t^2}\bigg{)}\Bigg{)}\Bigg{]} H_2^0(t).
\end{align*}
\end{corollary}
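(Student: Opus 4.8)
The plan is to assemble $H_2(t,a_1,a_2)$ from the three mark-count contributions already computed in the preceding subsubsections. By item~(3) of the list of equations opening this section, the partition of $\mathcal{H}_2^\times$ according to the number of marks gives
\begin{equation*}
H_2(t,a_1,a_2) = H_2^0(t) + H_2^1(t) + H_2^2(t),
\end{equation*}
so it suffices to add the three pieces. The key structural fact making this clean is that every marked multi-graph in $\mathcal{H}_2^r$ (for $r \in \{1,2\}$) arises as a star product of one or two connected marked structures with an arbitrary mark-free background drawn from $\mathcal{H}_2^0$. Because the star product corresponds to multiplication of EGFs, this forces both $H_2^1(t)$ and $H_2^2(t)$ to factor as a rational prefactor times the common factor $H_2^0(t)$.

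Concretely, the conclusion of the one-mark analysis gives $H_2^1(t) = P_1(t)\,H_2^0(t)$, where $P_1(t)$ is the prefactor collecting the contributions of the marked mussels, marked chains, and marked dumbbells; and the conclusion of the two-mark analysis gives $H_2^2(t) = P_2(t)\,H_2^0(t)$, where $P_2(t)$ collects the marked loops, marked chains, and pairs of one-marked structures. Writing $H_2^0(t) = 1 \cdot H_2^0(t)$, I would then factor $H_2^0(t)$ out of the sum to obtain $H_2(t,a_1,a_2) = (1 + P_1(t) + P_2(t))\,H_2^0(t)$, and the bracketed expression in the statement is exactly $1 + P_1(t) + P_2(t)$ with $H_2^0(t)$ substituted from its own proposition.

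This final step is purely algebraic bookkeeping, so there is no genuine analytic obstacle; the real content lies in the combinatorial classifications already carried out. The one point that must be checked is \emph{exhaustiveness}: that Tables~\ref{tab:typesH20}, \ref{tab:typesH21}, and~\ref{tab:typesH22} list every type of connected (marked) structure that can appear, so that no contribution to $H_2^1(t)$ or $H_2^2(t)$ is omitted. As suggested in the earlier remark about computing logarithms of EGFs, I would verify this by expanding $\ln H_2(t,a_1,a_2)$ as a power series and comparing its low-order coefficients against a direct computation of $h_2(n,a_1,a_2,c_1,c_2)$ for small $n$, since taking the logarithm makes the contribution of each connected structure additive and thereby exposes any missing structure type.
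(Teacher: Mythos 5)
Your proposal is correct and matches the paper's own argument: the paper likewise obtains the corollary by summing $H_2^0(t)+H_2^1(t)+H_2^2(t)$, using the factorizations $\mathcal{H}_2^1 = (\mathcal{M}_2^1+\mathcal{C}_2^1+\mathcal{D}_2^1)\star\mathcal{H}_2^0$ and $\mathcal{H}_2^2 = (\mathcal{L}_2^2+\mathcal{C}_2^2+\mathcal{P}_2^2)\star\mathcal{H}_2^0$ to factor out the common $H_2^0(t)$. Your added remark about verifying exhaustiveness of the structure classification via low-order coefficients of $\ln H_2$ is a sensible sanity check and is consistent with the heuristic the paper itself suggests in its remark on computing logarithms of EGFs.
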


\section{Hermitian matrices}\label{sec:Hermitian}
\subsection{Eulerian numbers and polynomials}
As we will see in the following subsections, there are new degrees of freedom from suppressing the condition that the real and imaginary parts of the matrix $Z$ are independent. This increases the complexity of our generating functions. It is convenient to introduce here the so called \textit{Eulerian numbers} $A(n,k)$, \textit{Eulerian polynomials} $A_n(u)$, and their corresponding bivariate generating functions. Eulerian numbers can be defined via an explicit relation
\begin{equation}
A(n,k)=\sum^n_{i=0} (-1)^i \binom{n+1}{i}(k+1-i)^n
\end{equation}
They appear ubiquitously in various combinatorial problems. For our purposes, we need to know that $A(n,k)$ counts the number of ways to order the numbers  $[n] = \{1,2,\ldots,n\}$ so that there are exactly $k$ numbers which are greater than the previous number. As a consequence, $\sum_{k=0}^n A(n,k) = n!$, where by definition we take $A(n,n) = 0$ when $n \geq 1$. The Eulerian numbers may also be defined as the coefficients of the Eulerian polynomials
\begin{equation}
A_n(u)=\sum^n_{k=0}A(n,k)u^k,
\end{equation}
which are themselves coefficients of the corresponding bivariate exponential generating function 
\begin{equation*}
E(u,t) = \sum^{\infty}_{n=0}A_n(u)\frac{t^n}{n!}=\frac{u-1}{u-e^{(u-1)t}}.
\end{equation*}
In our analysis, we will use the following bivariate generating function. The reason this bivariate generating function is useful is that it allows us to handle loops of length $n \geq 3$ where the weight of the loop depends on whether each edge is ascending or descending.
\begin{definition}
We define $B(u,t) = u\sum_{n=3}^{\infty}{A_{n-1}(u)\frac{t^n}{n!}}$.
\end{definition}
\begin{proposition}\label{prop:Bmotivation}
Let $\mathcal{L}$ be the set of loops of length at least $3$ where the vertices of each loop are labeled by the indices $[n]$ where $n$ is the length of the loop. 
\begin{enumerate}
\item If the weight of a loop $L \in \mathcal{L}$ of order $n$ is 
$w_L = (-1)^{n-1}\lambda^{|\{e = (j,j') \in E(L): j' > j\}|}{\lambda'}^{|\{e = (j,j') \in E(L): j' < j\}|}$ (i.e., each edge $(j,j')$ such that $j' > j$ has a factor of $\lambda$, each edge $(j,j')$ such that $j' < j$ has a factor of $\lambda'$, and the sign for $L$ is $(-1)^{n-1}$)
then the exponential generating function for $\mathcal{L}$ is $-B\left(\frac{\lambda}{\lambda'},-{\lambda'}t\right)$.
\item If the weight of a loop $L \in \mathcal{L}$ of order $n$ is 
$w_L = \lambda^{|\{e = (j,j') \in E(L): j' > j\}|}{\lambda'}^{|\{e = (j,j') \in E(L): j' < j\}|}$ (i.e., the same as the previous case but the sign of each loop is now positive) then the exponential generating function for $\mathcal{L}$ is $B\left(\frac{\lambda}{\lambda'},{\lambda'}t\right)$.
\end{enumerate}
\end{proposition}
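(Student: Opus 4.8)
The plan is to compute the total weight of all loops of a fixed order $n$ directly and then recognize the resulting exponential generating function as $B$. The key device is a canonical way to read a directed cycle: since every loop of order $n$ contains exactly one vertex labeled $1$, I would write each loop uniquely as a sequence $1 \to a_2 \to a_3 \to \cdots \to a_n \to 1$, where $(a_2,\ldots,a_n)$ ranges over all permutations of $\{2,\ldots,n\}$. This gives a bijection between loops of order $n$ and the $(n-1)!$ permutations of an $(n-1)$-element set, which is exactly the object the Eulerian numbers count. As a sanity check, setting $\lambda=\lambda'=1$ and ignoring signs recovers $\sum_{n\geq 3} (n-1)!\,t^n/n! = \sum_{n\geq 3} t^n/n$, the known generating function for cycles of length at least $3$.

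Next I would count ascending and descending edges in this representation. The first edge $1 \to a_2$ is always ascending (as $a_2 > 1$) and the wrap-around edge $a_n \to 1$ is always descending (as $a_n > 1$), while each internal edge $a_i \to a_{i+1}$ is ascending precisely when $a_{i+1} > a_i$. Thus, if the permutation $(a_2,\ldots,a_n)$ has exactly $j$ ascents—and since order-preserving relabeling onto $[n-1]$ preserves ascents, the number of such permutations is the Eulerian number $A(n-1,j)$—then the loop has $j+1$ ascending edges and $n-1-j$ descending edges. Summing the weight $\lambda^{j+1}(\lambda')^{n-1-j}$ over $j$ and factoring yields, for the positive-sign case (2),
\begin{equation*}
\sum_{\text{loops of order } n} w_L = \lambda (\lambda')^{n-1} \sum_{j=0}^{n-2} A(n-1,j)\left(\frac{\lambda}{\lambda'}\right)^{\!j} = \lambda (\lambda')^{n-1} A_{n-1}\!\left(\frac{\lambda}{\lambda'}\right),
\end{equation*}
and the same quantity multiplied by $(-1)^{n-1}$ in case (1).

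The final step is to assemble the exponential generating function and match it with $B$. Dividing by $n!$, weighting by $t^n$, and summing over $n \geq 3$ gives, in case (2),
\begin{equation*}
\sum_{n=3}^\infty \lambda (\lambda')^{n-1} A_{n-1}\!\left(\tfrac{\lambda}{\lambda'}\right) \frac{t^n}{n!} = \frac{\lambda}{\lambda'} \sum_{n=3}^\infty A_{n-1}\!\left(\tfrac{\lambda}{\lambda'}\right)\frac{(\lambda' t)^n}{n!} = B\!\left(\tfrac{\lambda}{\lambda'}, \lambda' t\right),
\end{equation*}
where the middle equality uses $\lambda(\lambda')^{n-1} = \tfrac{\lambda}{\lambda'}(\lambda')^n$ and the last uses the definition $B(u,t) = u\sum_{n\geq 3} A_{n-1}(u)\,t^n/n!$. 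Repeating the same manipulation with $t \mapsto -\lambda' t$ absorbs the extra $(-1)^{n-1}$ of case (1), since $-(-1)^n = (-1)^{n-1}$, producing $-B(\lambda/\lambda', -\lambda' t)$.

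I expect the only delicate point to be the bookkeeping of the two boundary edges together with the verification that the internal ascents coincide with the Eulerian statistic on $[n-1]$. Once the offsets are pinned down—$j+1$ ascending edges, $n-1-j$ descending edges, and Eulerian index $n-1$—both identities follow from purely formal rearrangement of the series, so no further analytic input is required.
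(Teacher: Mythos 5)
Your proposal is correct and matches the paper's own argument in all essentials: the paper roots each loop at the maximal vertex $n$ rather than at $1$ (so the two edges incident to the root again force exactly one factor of $\lambda$ and one of $\lambda'$), counts the internal ascents of the resulting ordering of $[n-1]$ by the Eulerian number $A(n-1,k)$, and performs the identical formal rearrangement to reach $B\left(\frac{\lambda}{\lambda'},\lambda' t\right)$ and $-B\left(\frac{\lambda}{\lambda'},-\lambda' t\right)$. The choice of anchor vertex (minimum versus maximum) is immaterial, since both make the boundary edges' directions forced and leave an Eulerian-distributed ascent statistic on the remaining $n-1$ labels.
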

\begin{proof}
Observe that in the first case, the exponential generating function for $\mathcal{L}$ is 
\begin{equation}
\sum_{n=3}^{\infty}{\sum_{k=0}^{n-2}{A(n-1,k)\lambda^{k+1}{\lambda'}^{n-k-1}\frac{(-1)^{n-1}t^n}{n!}}} = -\frac{\lambda}{\lambda'}\sum_{n=3}^{\infty}{A_{n-1}\left(\frac{\lambda}{\lambda'}\right)\frac{(-\lambda't)^n}{n!}} = -B\left(\frac{\lambda}{\lambda'},-{\lambda'}t\right).
\end{equation}
To see this, observe that for each loop $L \in \mathcal{L}$ of length $n$, we can consider $n$ to be the starting and ending point of the loop. We are guaranteed to have a factor of ${\lambda}{\lambda'}$ from the two edges incident to $n$. The remaining edges are an ordering of $[n-1] = \{1,\ldots,n-1\}$ so for each $n \geq 3$ and $k \in [0,n-2]$, there are $A(n-1,k)$ loops of length $n$ which have $k+1$ edges $(j,j')$ where $j' > j$ and $n-k-1$ edges $(j,j')$ where $j' < j$.

Following similar logic, in the second case the exponential generating function for $\mathcal{L}$ is 
\begin{equation}
\sum_{n=3}^{\infty}{\sum_{k=0}^{n-2}{A(n-1,k)\lambda^{k+1}{\lambda'}^{n-k-1}\frac{t^n}{n!}}} = \frac{\lambda}{\lambda'}\sum_{n=3}^{\infty}{A_{n-1}\left(\frac{\lambda}{\lambda'}\right)\frac{(\lambda't)^n}{n!}} = B\left(\frac{\lambda}{\lambda'},{\lambda'}t\right).
\end{equation}
\end{proof}
We can evaluate the bivariate generating function $B(u,t)$ by integrating $E(u,t)$ with respect to $t$.
\begin{proposition}\label{prop:Bexpression}
$B(u,t) = u\sum^{\infty}_{n=3}A_{n-1}(u)\frac{t^n}{n!} = \ln\left(\frac{1-u}{1-ue^{(1-u)t}}\right) - ut - \frac{ut^2}{2}.$
\end{proposition}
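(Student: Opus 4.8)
The plan is to recognize $B(u,t)$ as an antiderivative (in $t$) of the known bivariate generating function $E(u,t)$ and then integrate the closed form $E(u,t) = \frac{u-1}{u-e^{(u-1)t}}$ explicitly. First I would differentiate the defining series term by term. Writing $B(u,t) = u\sum_{n=3}^{\infty} A_{n-1}(u)\frac{t^n}{n!}$ and reindexing with $m = n-1 \geq 2$, one gets
\begin{equation*}
\frac{\partial B}{\partial t}(u,t) = u\sum_{n=3}^{\infty} A_{n-1}(u)\frac{t^{n-1}}{(n-1)!} = u\sum_{m=2}^{\infty} A_m(u)\frac{t^m}{m!}.
\end{equation*}
The tail sum is just $E(u,t)$ with its two lowest terms removed. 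Using the definition of the Eulerian numbers to check $A_0(u) = 1$ and $A_1(u) = 1$, I would then write $\frac{\partial B}{\partial t} = u\bigl(E(u,t) - 1 - t\bigr) = u\bigl(\tfrac{u-1}{u-e^{(u-1)t}} - 1 - t\bigr)$.

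Next I would integrate from $0$ to $t$, using that $B(u,0) = 0$ since the series begins at $n = 3$. The two polynomial terms integrate to $-u\bigl(t + \tfrac{t^2}{2}\bigr)$, which already produce the $-ut - \tfrac{ut^2}{2}$ in the claimed formula. The essential computation is the integral $\int_0^t \frac{u-1}{u - e^{(u-1)s}}\,\dd s$. I would evaluate it by the substitution $v = e^{(u-1)s}$, so that $\dd s = \frac{\dd v}{(u-1)v}$, reducing the integrand to $\frac{1}{(u-v)v}$, and then apply the partial fraction decomposition $\frac{1}{(u-v)v} = \frac{1}{u}\bigl(\frac{1}{v} + \frac{1}{u-v}\bigr)$. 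This yields an antiderivative $\frac{1}{u}\ln\frac{v}{u-v}$, and after multiplying back by the prefactor $u$ and evaluating the limits (noting $v = 1$ at $s = 0$), the logarithmic term becomes $\ln\frac{(u-1)e^{(u-1)t}}{u - e^{(u-1)t}}$.

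The final step is a purely algebraic rewrite to match the stated form: multiplying numerator and denominator of $\frac{(u-1)e^{(u-1)t}}{u - e^{(u-1)t}}$ by $e^{(1-u)t}$ gives $\frac{u-1}{u e^{(1-u)t} - 1} = \frac{1-u}{1 - u e^{(1-u)t}}$, which is exactly the argument of the logarithm in the proposition. I expect the main obstacle to be bookkeeping rather than conceptual: one must justify the termwise differentiation and integration (cleanly handled by treating everything as identities of formal power series in $t$ with $u$ a parameter, or as analytic identities on a suitable domain extended by continuation), verify the edge values $A_0(u) = A_1(u) = 1$ so that subtracting $1 + t$ is correct, and carefully track the branch/sign choices in the logarithm so that the final simplification lands on $\frac{1-u}{1-u e^{(1-u)t}}$ rather than its reciprocal. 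Once these are in place, the identity follows directly.
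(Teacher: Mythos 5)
Your proposal is correct and takes essentially the same route as the paper: both rest on the antiderivative identity $\int_0^t E(u,s)\,\dd s = \sum_{n\geq 1} A_{n-1}(u)\frac{t^n}{n!} = \frac{1}{u}\ln\left(\frac{1-u}{1-ue^{(1-u)t}}\right)$ together with $A_0(u)=A_1(u)=1$, with the only cosmetic difference that you differentiate $B$ and integrate back rather than integrating $E$ and peeling off the $n=1,2$ terms. Your substitution-and-partial-fractions evaluation of the integral simply fills in a computation the paper asserts without proof, and your algebra (including the final rewrite to $\frac{1-u}{1-ue^{(1-u)t}}$) checks out.
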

\begin{proof}
\begin{equation}\label{Eq:EulerSpec}
\int{E(u,t)dt} = \sum^{\infty}_{n=1}A_{n-1}(u)\frac{t^n}{n!} = \frac{1}{u}\ln\left(\frac{1-u}{1-ue^{(1-u)t}}\right).
\end{equation}
This implies that $t + \frac{t^2}{2} + \frac{1}{u}B(u,t) = \frac{1}{u}\ln\left(\frac{1-u}{1-ue^{(1-u)t}}\right)$. Rearranging and multiplying both sides of this equation by $u$, we have that $B(u,t) = \ln\left(\frac{1-u}{1-ue^{(1-u)t}}\right) - ut - \frac{ut^2}{2}$, as needed.
\end{proof}
\begin{remark}
While it is not obvious from this expression for $B(u,t)$, it is not hard to check that $B(\frac{\lambda}{\lambda'},-{\lambda'}t) = B(\frac{\lambda'}{\lambda},-{\lambda}t)$
\end{remark}
\subsection{First Moment}
As a warm-up, we compute $\mathbb{E}[\det(Z)]$. For this computation, we do not assume that $\lambda_{10} = 0$ or $\kappa_1 = 0$ so we do not have $a_1$ or $c_1$ and we do not use marked tables or graphs. To indicate this, we write $\mathcal{F}_1$ and $F_1$ as we did in Section \ref{sec:multigraphs} rather than writing $\mathcal{H}_1$ and $H_1$.
\paragraph{Fixed points} Let $\mathcal{X}_1$ be the set of fixed points in $\mathcal{F}_1$. Then
\begin{equation}
    \mathcal{X}_1 = \textsc{Set} \bigg{(}\!\!\!
\begin{tikzpicture}[baseline=-0.4ex, scale = 0.8]
\node[vertex] (1) at (0,0) {$1$};
\draw[->] (1.-30) arc (-110:160:0.45);
\end{tikzpicture}
\bigg{)}.
\end{equation}
In terms of the EGF, this gives a factor of $\exp(\kappa_1 t)$.

\paragraph{Mussels} Next, let $\mathcal{M}_1$ be the set of mussels in $\mathcal{F}_1$. Then
\begin{equation}
    \mathcal{M}_1 = \textsc{Set}\Bigg{(}
\begin{tikzpicture}[baseline=-0.65ex, scale = 0.8]
\node[vertex] (2) at (0,0.8) {$2$};
\node[vertex] (1) at (0,-0.8) {$1$};
\draw[->] (2) to [bend left=50] (1);
\draw[->] (1) to [bend left=50] (2);
\end{tikzpicture}
\Bigg{)}.
\end{equation}
The EGF of the inner factor is $-\lambda_{11} t^2/2!$ so the EGF of $\mathcal{M}_1$ is $\exp(-\tfrac12 \lambda_{11} t^2)$.

\paragraph{Loops} For the set of loops $\mathcal{L}_1$, observe that $\mathcal{L}_1 = \textsc{Set}(\mathcal{L})$ where $\mathcal{L}$ is the set of single loops of length $n \geq 3$ and the weight of a loop $L \in \mathcal{L}$ of order $n$ is 
\[
w_L = (-1)^{n-1}\lambda^{|\{e = (j,j') \in E(L): j' > j\}|}{\lambda'}^{|\{e = (j,j') \in E(L): j' < j\}|}.
\]
where $\lambda = \lambda_{10}$ and $\lambda' = \lambda_{01}$. By Proposition \ref{prop:Bmotivation} and Proposition \ref{prop:Bexpression}, the EGF for $\mathcal{L}$ is 
\[
-B\left(\frac{\lambda_{10}}{\lambda_{01}},-\lambda_{01}t\right) = -\ln\left(\frac{\lambda_{01}-\lambda_{10}}{\lambda_{01}-\lambda_{10}e^{(\lambda_{10}-\lambda_{01})t}}\right) - \lambda_{10}t + \lambda_{10}\lambda_{01}\frac{t^2}{2}.
\]
Exponentiating this gives the EGF for $\mathcal{L}_1$, which is 
\[
\frac{\lambda_{01}-\lambda_{10}e^{(\lambda_{10}-\lambda_{01})t}}{\lambda_{01}-\lambda_{10}}\exp\left(-\lambda_{10}t + \lambda_{10}\lambda_{01}\frac{t^2}{2}\right).
\]

\paragraph{Conclusion}
Since $\mathcal{F}_1=\mathcal{X}_1 \star \mathcal{M}_1 \star \mathcal{L}_1$, by multiplying the EGFs we obtain that
\begin{align*}
F_1(t) = &\exp(\kappa_{1} t)\exp(-\tfrac12 \lambda_{11} t^2)\frac{\lambda_{01}-\lambda_{10}e^{(\lambda_{10}-\lambda_{01})t}}{\lambda_{01}-\lambda_{10}}\exp\left(-\lambda_{10}t + \lambda_{10}\lambda_{01}\frac{t^2}{2}\right)\\
&=\frac{e^{\frac{1}{2} t \left( 2 \kappa_1 + (-2 + t \lambda_{01}) \lambda_{10} - t \lambda_{11} \right)} \left( \lambda_{01} - e^{t (-\lambda_{01} + \lambda_{10})} \lambda_{10} \right)}{\lambda_{01} - \lambda_{10}}.
\end{align*}

\subsection{Second moment}
\subsubsection{Zero marks}
In the case of second moment, note that the Wigner case is very close to the more general Hermitian case (see Table \ref{tab:Wigner}). Thus, multi-graphs of Hermitian matrices have the same (connected) structures as the Wigner multi-graphs. This time, however, we no longer assume $\lambda_{20}=\lambda_{02}$ (both are equal to $\mu_2 - \nu_2$ in Wigner case). This extra degree of freedom affects how weight is calculated for loops whose arrows (dashed and undashed) point in the opposite direction (other structures have the same weight as before, we just replace $\mu$'s and $\nu$'s by corresponding $\lambda$'s). It is therefore convenient to split the loops $\mathcal{L}_2$ into $\mathcal{A}_2 \star \mathcal{O}_2$, where $\mathcal{A}_2$ denotes the set of anti-parallel loops (arrows pointing in different directions) and $\mathcal{O}_2$ denotes the set of parallel loops (with arrows poiting in the same direction). Table \ref{tab:typesH20Herm} below shows representatives of each (connected) structure.
\begin{table}[H]
    \centering
    \begin{tabular}{
      >{\centering}p{3.61em}
      >{\centering}p{3.42em}
      >{\centering}p{5.5em}
      >{\centering}p{7.02em}
      >{\centering}p{5.52em}
      >{\centering}p{5.35em}
      >{\centering\arraybackslash}p{4.75em}
      }
        \begin{tikzpicture}[baseline = -1.3ex,scale = 0.85]
        \node[vertex] (1) at (-2.5,1) {$1$};
        \draw[->] (1.-60) arc (180+45+20:540-45-15:0.45);
        \draw[->,dashed] (1.-90) arc (180+45:540-45:0.6);
        \end{tikzpicture}
     &
        \begin{tikzpicture}[scale = 0.85]
        \node[vertex] (1) at (-2.5,1) {$1$};
        \node[vertex] (2) at (-2.5,-1) {$2$};
        \draw[->] (1.-65) to [bend left=50] (2.65);
        \draw[->,dashed] (2) to [bend left=50] (1);
        \draw[->] (2.115) to [bend left=50] (1.-115);
        \draw[->,dashed] (1) to [bend left=50] (2);
        \end{tikzpicture}
     &
        \begin{tikzpicture}[scale = 0.85]
        \node[vertex] (1) at (-1,0.8) {$1$};
        \node[vertex] (2) at (1,0.8) {$2$}; 
        \node[vertex] (3) at (0,-1) {$3$};
        \draw[->] (1.-002) to [bend left=40] (2.182);
        \draw[->,dashed] (1) to [bend left=35] (2);
        \draw[->,dashed] (2) to [bend left=40] (3);
        \draw[->] (2.250) to [bend left=40] (3.55);
        \draw[->,dashed] (3) to [bend left=40] (1);
        \draw[->] (3.130) to [bend left=40] (1.290);
        \end{tikzpicture}      
     &
        \begin{tikzpicture}[scale = 0.85]
        \node[vertex] (1) at (-1,0.8) {$1$};
        \node[vertex] (2) at (1,0.8) {$2$}; 
        \node[vertex] (3) at (0,-1) {$3$};
        \draw[->] (2.182) to [bend left=-40] (1.-002);
        \draw[->,dashed] (1) to [bend left=35] (2);
        \draw[->,dashed] (2) to [bend left=40] (3);
        \draw[->] (3.55) to [bend left=-40] (2.250);
        \draw[->,dashed] (3) to [bend left=40] (1);
        \draw[->] (1.290) to [bend left=-40] (3.130);
        \end{tikzpicture}      
     &
        \begin{tikzpicture}[scale = 0.85]
        \node[vertex] (4) at (3,1) {$4$};
        \node[vertex] (3) at (4,0) {$3$};
        \node[vertex] (2) at (3,-1) {$2$};
        \node[vertex] (1) at (2,0) {$1$};
        \draw[->,dashed] (4) to [bend left=20] (3);
        \draw[->,dashed] (3) to [bend left=20] (4);
        \draw[->] (3) to [bend left=20] (2);
        \draw[->] (2) to [bend left=20] (3);
        \draw[->,dashed] (2) to [bend left=20] (1);
        \draw[->,dashed] (1) to [bend left=20] (2);
        \draw[->] (1) to [bend left=20] (4);
        \draw[->] (4) to [bend left=20] (1);
        \end{tikzpicture}
    &
 \begin{tikzpicture}[baseline=-6.0ex,scale = 0.85]
        \node[vertex] (1) at (-0.8,-1) {$1$};
        \node[vertex] (2) at (0,0) {$2$};
        \node[vertex] (3) at (0.8,1) {$3$};
        \draw[->,dashed] (1) to [bend left=20] (2);
        \draw[->,dashed] (2) to [bend left=20] (1);
        \draw[->] (2) to [bend left=20] (3);
        \draw[->] (3) to [bend left=20] (2);
        \draw[->] (1.200) arc (-210:35:0.45);
        \draw[->,dashed] (3.10) arc (-30:215:0.45);
        \end{tikzpicture}
        &
 \begin{tikzpicture}[scale = 0.85]
        \node[vertex] (1) at (0,0) {$1$};
        \node[vertex] (2) at (0,1.5) {$2$};
        \draw[->] (1.200) arc (-210:35:0.45);
        \draw[->,dashed] (1) to [bend left=30] (2);
        \draw[->,dashed] (2) to [bend left=30] (1);
        \draw[->] (2.10) arc (-30:215:0.45);
        \end{tikzpicture}
        \\
        \begin{tabular}{c}Fixed\\[-0.3em]point\end{tabular}
        & Mussel
        & \begin{tabular}{c}Parallel\\[-0.3em]loop\end{tabular}
        & \begin{tabular}{c}Anti-parallel\\[-0.3em]loop\end{tabular}
        & Necklace
        & Chain
        & Dumbbell
        \\
        $\mathcal{X}_2$
        & $\mathcal{M}_2$
        & $\mathcal{O}_2$
        & $\mathcal{A}_2$
        & $\mathcal{N}_2$
        & $\mathcal{C}_2$
        & $\mathcal{D}_2$
    \end{tabular}
    \caption{Types of structures in $\mathcal{H}_2$ (Hermitian case)}
    \label{tab:typesH20Herm}
\end{table}

\paragraph{Fixed points} Let $\mathcal{X}_2$ be the set of fixed points in $\mathcal{H}_2^0$. Then
\begin{equation}
    \mathcal{X}_2 = \textsc{Set} \bigg{(}\!\!\!
        \begin{tikzpicture}[baseline=3.7ex,scale = 0.85]
        \node[vertex] (1) at (-2.5,1) {$1$};
        \draw[->] (1.-60) arc (180+45+20:540-45-15:0.45);
        \draw[->,dashed] (1.-90) arc (180+45:540-45:0.6);
        \end{tikzpicture}
\bigg{)}.
\end{equation}
In terms of the EGF, this gives a factor of $\exp((c_1c_2+\kappa_2) t)$.

\paragraph{Mussels}
Next, let $\mathcal{M}_2$ be the set of mussels in $\mathcal{H}_2^0$. Then
\begin{equation}
    \mathcal{M}_2 = \textsc{Set}\Bigg{(}
        \begin{tikzpicture}[baseline=-0.65ex,scale = 0.85]
        \node[vertex] (1) at (-2.5,1) {$1$};
        \node[vertex] (2) at (-2.5,-1) {$2$};
        \draw[->] (1.-65) to [bend left=50] (2.65);
        \draw[->,dashed] (2) to [bend left=50] (1);
        \draw[->] (2.115) to [bend left=50] (1.-115);
        \draw[->,dashed] (1) to [bend left=50] (2);
        \end{tikzpicture}
\Bigg{)}.
\end{equation}
The EGF of the inner factor is $\lambda_{22} t^2/2!$ so the EGF of $\mathcal{M}_2$ is $\exp(\tfrac12 \lambda_{22} t^2)$.

\paragraph{Necklaces} For the set of necklaces $\mathcal{N}_2$,
\begin{equation}
    \mathcal{N}_2 = \textsc{Set} \Bigg{(} \frac12 \,  \textsc{Cyc}_{\geq 2} \Bigg{(}
    \obchauab\!\!\!\!\!\!
    + \!\!\! \obchauba \!\!\!\
\Bigg{)}\Bigg{)}.
\end{equation}
Hence, we have that its EGF is
\begin{equation}
\exp\left(\frac12\left(-2\lambda_{11}^2\frac{t^2}{2!}+\ln\frac{1}{1-2\lambda_{11}^2\frac{t^2}{2!}}\right)\right) = \frac{e^{-\frac12 \lambda_{11}^2 t^2}}{\sqrt{1-\lambda_{11}^2 t^2}}.   
\end{equation}

\paragraph{Chains} Let $\mathcal{C}_2$ be the set of all chains. We can write
\begin{equation}
    \mathcal{C}_2 =
        \textsc{Set} \Bigg{(} \ofixl \!\!     
\star  \textsc{Seq}_{\geq 1} \Bigg{(}
\obchadab\!\!\!\!\!\! + \!\!\!\obchadba\!\!\! \Bigg{)} \star \fixdar \Bigg{)}.
\end{equation}
In terms of the EGF,
\begin{equation*}
\exp\left[c_1\left(-1+\frac{1}{1-2\lambda_{11}^2 \frac{t^2}{2!}}\right) c_2 t \right]= \exp \left(\frac{c_1 c_2 \lambda_{11}^2t^3}{1-\lambda_{11}^2 t^2}\right).
\end{equation*}

\paragraph{Dumbbells} The set of dumbbels $\mathcal{D}_2$ can be constructed as
\begin{equation}
\begin{split}    
    \mathcal{D}_2 = \,\, &
        \textsc{Set} \Bigg{(}\,\frac12\,\,\,
        \oachaua \,\,     
        \star  \textsc{Seq} \Bigg{(}
        \obchauab \!\!\!\!\!\! +
        \!\!\!\obchauba\!\!\!\
        \Bigg{)}
        \star\,\, \fixuap \, \Bigg{)}
    \\
        \star \,\, & \textsc{Set} \Bigg{(}\,\frac12\,\,\, \oachada \,\,     
        \star  \textsc{Seq} \Bigg{(}
        \obchadab \!\!\!\!\!\!
        + \!\!\! \obchadba \!\!\!\
        \Bigg{)}
        \star\,\,
        \fixdap \, \Bigg{)}.
    \end{split}
    \end{equation}
Its EGF is thus equal to
\begin{equation}
\begin{split}
&\left[\exp\left(-\frac12 c_1 \lambda_{11}t\left(\frac{1}{1-2\lambda_{11}^2 \frac{t^2}{2!}}\right) c_1 t\right)\right]\left[\exp\left(-\frac12 c_2 \lambda_{11} t\left(\frac{1}{1-2\lambda_{11}^2 \frac{t^2}{2!}}\right) c_2 t\right)\right]\\
& = \exp\left(-\,\frac{\frac12 (c_1^2+c_2^2)\lambda_{11} t^2}{1-\lambda_{11}^2t^2}\right).
\end{split}
\end{equation}

\paragraph{Anti-parallel Loops} We can write
\begin{equation}
    \mathcal{A}_2 = \textsc{Set} \bigg{(} \textsc{Cyc}_{\geq 3} \bigg{(} \oalooduo \!\!\!\!\!\!
\bigg{)}\bigg{)}.
\end{equation}
As the element in the bracket has weight $W = \lambda_{11}t$, the corresponding EGF of $\mathcal{A}_2$ is
\begin{equation*}
\exp\left(-\lambda_{11}t - \frac{\lambda_{11}^2 t^2}{2} + \ln\frac{1}{1-\lambda_{11} t}\right) = \frac{e^{-\lambda_{11}t-\frac12 \lambda_{11}^2t^2}}{1-\lambda_{11}t}.
\end{equation*}

\paragraph{Parallel Loops}
For the set of parallel loops $\mathcal{O}_2$, observe that $\mathcal{O}_2 = \textsc{Set}(\mathcal{L})$ where each element $L \in \mathcal{L}$ is a single pair of parallel loops of length $n \geq 3$ and the weight of an element $L \in \mathcal{L}$ of order $n$ is 
\[
w_L = (-1)^{n-1}\lambda^{|\{e = (j,j') \in E(L): j' > j\}|}{\lambda'}^{|\{e = (j,j') \in E(L): j' < j\}|}
\]
where $\lambda = \lambda_{20}$ and $\lambda' = \lambda_{02}$. By Proposition \ref{prop:Bmotivation} and Proposition \ref{prop:Bexpression}, the EGF for $\mathcal{L}$ is 
\begin{equation}\label{eq:parallelloops}
B\left(\frac{\lambda_{20}}{\lambda_{02}},\lambda_{02}t\right) = \ln\left(\frac{\lambda_{02}-\lambda_{20}}{\lambda_{02}-\lambda_{20}e^{(\lambda_{02}-\lambda_{20})t}}\right) - \lambda_{20}t - \lambda_{20}\lambda_{02}\frac{t^2}{2}
\end{equation}
Exponentiating this gives the EGF for $\mathcal{O}_2$, which is 
\[
\frac{\lambda_{02}-\lambda_{20}}{\lambda_{02}-\lambda_{20}e^{(\lambda_{02}-\lambda_{20})t}}exp\left(-\lambda_{20}t - \lambda_{20}\lambda_{02}\frac{t^2}{2}\right).
\]
Note that when $\lambda_{20} = \lambda_{02}$ (as a limit), this is equal to
\begin{equation}
\frac{1}{1-\lambda_{20}t} \exp\left(-\lambda_{20}t  - \lambda_{20}^2\frac{t^2}{2}\right).\end{equation}
which is precisely the anti-parallel loop EGF with $\lambda_{20}$ instead of $\lambda_{11}$ as we would expect.

\paragraph{Conclusion}
As $\mathcal{H}_2^0 = \mathcal{X}_2 \star \mathcal{M}_2 \star \mathcal{O}_2 \star \mathcal{A}_2 \star \mathcal{N}_2 \star \mathcal{C}_2 \star \mathcal{D}_2$, by multiplying the EGFs we obtain that
\begin{equation*}
H_2^0(t) \!=\! \frac{\left(\lambda_{02}\!-\!\lambda _{20}\right) \exp \!\left(\!
   \left(c_1 c_2\!+\!\kappa_2\!-\!\lambda_{11}\!-\!\lambda_{20}\right)t\!+\!\frac{t^2}{2}\left(\lambda_{22}\!-\!2\lambda_{11}^2 \!-\! \lambda_{02}
   \lambda_{20} \right)\!+\!\frac{c_1 c_2 \lambda_{11}^2 t^3}{1\!-\!\lambda_{11}^2 t^2}\!-\!\frac{t^2}{2}\frac{\left(c_1^2\!+\!c_2^2\right) \lambda_{11}}{1\!-\!\lambda_{11}^2 t^2}\!\right)}{\left(1\!-\!
   \lambda_{11}t\right) \sqrt{1\!-\!\lambda_{11}^2t^2}
   \left(\lambda_{02}\!-\!\lambda_{20} e^{ \left(\lambda_{02}\!-\!\lambda_{20}\right)t}\right)}.
\end{equation*}

\subsubsection{One mark}
The one-marked structures are the same as in the case of Wigner matrices. There are 1-marked mussels, chains and dumbbells (see Table \ref{tab:typesH21} in the section on Wigner matrices).

\paragraph{Marked mussels}
Let $\mathcal{M}_2^1$ be the set of $1$-marked mussels. By adding up the contribution for the four different possibilities of marking, that is
\begin{equation}
\mathcal{M}_2^1 = \musmuab\, + \, \musmuba\, +\, \musmdab + \musmdba,
\end{equation}
we obtain $(a_1 + a_2)(\lambda_{21}+\lambda_{12})t^2/2!$ for its EGF.

\paragraph{Marked chains} For the set $\mathcal{C}_2^1$ of chains with one mark, we write
\begin{equation}
\begin{split}
        \mathcal{C}_2^1 = \, & \ofixmul\!\! \star \textsc{Seq} \Bigg{(} \obchadab\!\!\!\!\!\! + \!\!\!\obchadba\!\!\!\ \Bigg{)} \star \fixdar.
    \\
        + \, & \ofixmdl \!\! \star  \textsc{Seq} \Bigg{(} \obchauab\!\!\!\!\!\! + \!\!\!\obchauba\!\!\!\ \Bigg{)} \star \fixuar.
\end{split}
    \end{equation}
In terms of the EGF,
\begin{equation*}
a_1\left(\frac{1}{1-2\lambda_{11}^2 \frac{t^2}{2!}}\right) c_2 t + a_2\left(\frac{1}{1-2\lambda_{11}^2 \frac{t^2}{2!}}\right) c_1 t=  \frac{(a_1 c_2+a_2 c_1) t}{1-\lambda_{11}^2 t^2}.
\end{equation*}

\paragraph{Marked dumbbells} For the set $\mathcal{D}_2^1$ of all dumbbells with one mark,
\begin{equation}
\begin{split}
        \mathcal{D}_2^1 = \,\, & \oachamua \,\,\, \star \, \textsc{Seq} \Bigg{(} \obchauab\!\!\!\!\!\! + \!\!\!\obchauba\!\!\! \Bigg{)} \star\,\, \fixuap \\
        + \,\, & \oachamda\,\,\, \star \, \textsc{Seq} \Bigg{(} \obchadab\!\!\!\!\!\! + \!\!\!\obchadba\!\!\! \Bigg{)} \star\,\, \fixdap.
    \end{split}
    \end{equation}
Its EGF is thus equal to
\begin{equation}
    -a_1 \lambda_{11}t\left(\frac{1}{1-2\lambda_{11}^2 \frac{t^2}{2!}}\right) c_1 t -a_2 \lambda_{11}t\left(\frac{1}{1-2\lambda_{11}^2 \frac{t^2}{2!}}\right) c_2 t = -\,\frac{\lambda_{11}(a_1c_1+a_2c_2) t^2}{1-\lambda_{11}^2t^2}.
\end{equation}

\paragraph{Conclusion}
As $\mathcal{H}_2^1 = (\mathcal{M}^1_2 + \mathcal{C}^1_2 + \mathcal{D}^1_2) \star \mathcal{H}_2^0$, we obtain that 
\begin{equation}
H_2^1(t)= \left((a_1 + a_2) (\lambda_{21}+\lambda_{12}) \frac{t^2}{2} +\frac{(a_1c_2+a_2c_1) t}{1-\lambda_{11}^2 t^2} -\frac{\lambda_{11}(a_1c_1+a_2c_2) t^2}{1-\lambda_{11}^2t^2}\right) H_2^0(t).
\end{equation}

\subsubsection{Two marks}
The following Table \ref{tab:typesH22herm} shows representatives of all types of the remaining (connected) structures having exactly two marks. Note that this table is the same as Table \ref{tab:typesH22} in the Wigner case except for the fact we now distinguish between two types of loops.
\begin{table}[H]
    \centering
\begin{tabular}{ccc}
        \begin{tikzpicture}[scale = 0.85]
        \node[vertex] (1) at (-1,0.8) {$1$};
        \node[vertex] (2) at (1,0.8) {$2$}; 
        \node[vertex] (3) at (0,-1) {$3$};
        \draw[->] (2.182) to [bend left=-40] (1.-002);
        \draw[->,dashed] (1) to [bend left=35] node[fill=white,sloped]{$\!\!\!\times\!\!\!$} (2);
        \node[vertex] (x) at (0,1) {$\times$};
        \draw[->,dashed] (2) to [bend left=40] (3);
        \draw[->] (3.55) to [bend left=-40] (2.250);
        \draw[->,dashed] (3) to [bend left=40] (1);
        \draw[->] (1.290) to [bend left=-40] (3.130);
        \end{tikzpicture}    
    &        \begin{tikzpicture}[scale = 0.85]
        \node[vertex] (1) at (-1,0.8) {$1$};
        \node[vertex] (2) at (1,0.8) {$2$}; 
        \node[vertex] (3) at (0,-1) {$3$};
        \draw[->] (2.182) to [bend left=-40] (1.-002);
        \draw[->,dashed] (1) to [bend left=35] node[fill=white,sloped]{$\!\!\!\times\!\!\!$} (2);
        \node[vertex] (x) at (0,1) {$\times$};
        \draw[->,dashed] (2) to [bend left=40] (3);
        \draw[->] (3.55) to [bend left=-40] (2.250);
        \draw[->,dashed] (3) to [bend left=40] (1);
        \draw[->] (1.290) to [bend left=-40] (3.130);
        \end{tikzpicture}    
    &
        \begin{tikzpicture}[scale = 0.85]
        \node[vertex] (1) at (0,0) {$1$};
        \node[vertex] (2) at (1.5,1) {$2$};
        \node[vertex] (3) at (3,2) {$3$};
        \draw[->] (1.-70+180) arc (-130+180:125+180:0.45) node[fill=white,midway,sloped]{$\!\!\times\!\!$};
        \draw[->,dashed] (1) to [bend left=20] (2);
        \draw[->,dashed] (2) to [bend left=20] (1);
        \draw[->] (2) to [bend left=20] (3);
        \draw[->] (3) to [bend left=20] (2);
        \draw[->,dashed] (3.-70) arc (-130:125:0.45) node[fill=white,midway,sloped]{$\!\!\times\!\!$};
        \end{tikzpicture}
    \\
     $2$-Marked anti-parallel loop & $2$-Marked parallel loop & $2$-Marked chain
        \\
        $\mathcal{O}_2^2$
        & $\mathcal{A}_2^2$
        & $\mathcal{C}_2^2$
\end{tabular}
    \caption{Types of structures in $\mathcal{H}_2^2$ with two marks (Hermitian case)}
    \label{tab:typesH22herm}
\end{table}

\begin{remark}
There are no marked dumbbells as they would violate the assumption that there is at most one mark per one type of arrow. There are, however, $2$-marked fixed points (two marked $1$-cycle), but they are again counted as $2$-marked chains with one vertex. Similarly, $2$-marked mussels (two $2$-cycles with two marks) are counted as $2$-marked loops with two vertices.
\end{remark}

\paragraph{Marked anti-parallel loops} The set of marked anti-parallel loops $\mathcal{A}_2^2$ (including mussels) is created as follows
\begin{equation}
        \mathcal{A}^2_2 =
        \bigg{(}\oaloomdmuo\!\!\!\!\!\! \star \textsc{Seq}_{\geq 1} \bigg{(} \oalooduo
        \!\!\!\!\!\!
        \bigg{)}\bigg{)}.
    \end{equation}
Its EGF is then simply
\begin{equation*}
a_1a_2 t \left(-1+\frac{1}{1-\lambda_{11} t}\right)
= \frac{a_1 a_2 \lambda_{11} t^2}{1-\lambda_{11} t}.
\end{equation*}

\paragraph{Marked parallel loops} 
We can analyze the set of marked parallel loops as follows. For a given marked parallel loop of length $n$ with $k$ increasing edges and $n-k$ decreasing edges, the weight before we mark a pair of parallel edges is $\lambda_{20}^{k}\lambda_{02}^{n-k}$

When we mark this parallel loop, if we mark an increasing edge (i.e., an edge $j,j'$ such that $j' > j$) then this replaces a factor of $\lambda_{20}$ with ${a_1}{a_2}$. There are $k$ ways to do this so the total contribution is $k{a_1}{a_2}\lambda_{20}^{k-1}\lambda_{02}^{n-k} = {a_1}{a_2}\frac{\partial\left(\lambda_{20}^{k}\lambda_{02}^{n-k}\right)}{\partial{\lambda_{20}}}$. Similarly, if we mark a decreasing edge then this replaces a factor of $\lambda_{02}$ with ${a_1}{a_2}$. There are $n-k$ ways to do this so the total contribution is ${a_1}{a_2}(n-k)\lambda_{20}^{k-1}\lambda_{02}^{n-k-1} = {a_1}{a_2}\frac{\partial\left(\lambda_{20}^{k}\lambda_{02}^{n-k}\right)}{\partial{\lambda_{02}}}$.

Thus, we can obtain the contribution from marking a parallel loop by computing the partial derivatives of its original contribution with respect to $\lambda_{20}$ and $\lambda_{02}$, adding these partial derivates together, and multiplying the result by ${a_1}{a_2}$.

By equation \ref{eq:parallelloops}, the EGF for a single pair of parallel loops of length at least $3$ is
\[
B\left(\frac{\lambda_{20}}{\lambda_{02}},\lambda_{02}t\right) = \ln\left(\frac{\lambda_{02}-\lambda_{20}}{\lambda_{02}-\lambda_{20}e^{(\lambda_{02}-\lambda_{20})t}}\right) - \lambda_{20}t - \lambda_{20}\lambda_{02}\frac{t^2}{2}.
\]
We now observe that 
\begin{enumerate}
\item $\frac{\partial{B\left(\frac{\lambda_{20}}{\lambda_{02}},\lambda_{02}t\right)}}{\partial{\lambda_{20}}} = -\frac{1}{\lambda_{02}-\lambda_{20}} + \frac{(1 - \lambda_{20}t)e^{(\lambda_{02}-\lambda_{20})t}}{\lambda_{02}-\lambda_{20}e^{(\lambda_{02}-\lambda_{20})t}}-t-\lambda_{02}\frac{t^2}{2}$.
\item $\frac{\partial{B\left(\frac{\lambda_{20}}{\lambda_{02}},\lambda_{02}t\right)}}{\partial{\lambda_{02}}} = \frac{1}{\lambda_{02}-\lambda_{20}} + \frac{-1 + \lambda_{20}te^{(\lambda_{02}-\lambda_{20})t}}{\lambda_{02}-\lambda_{20}e^{(\lambda_{02}-\lambda_{20})t}}-\lambda_{20}\frac{t^2}{2}$.
\end{enumerate}
Adding these contributions together and multiplying the result by ${a_1}{a_2}$ gives \\
${a_1}{a_2}\left(\frac{e^{(\lambda_{02}-\lambda_{20})t} - 1}{\lambda_{02}-\lambda_{20}e^{(\lambda_{02}-\lambda_{20})t}} - t - (\lambda_{20} + \lambda_{02})\frac{t^2}{2}\right)$.

We make one final adjustment to this expression. Since we want to include marked parallel loops of length $2$ in $\mathcal{O}_2^2$, we add ${a_1}{a_2}(\lambda_{20} + \lambda_{02})\frac{t^2}{2}$ to the above expression. Thus, the EGF for $\mathcal{O}_2^2$ is 
\[
{a_1}{a_2}\left(\frac{e^{(\lambda_{02}-\lambda_{20})t} - 1}{\lambda_{02}-\lambda_{20}e^{(\lambda_{02}-\lambda_{20})t}} - t\right).
\]
\paragraph{Pair of one-marked structures} 
Finally, let $\mathcal{P}_2^2$ be the set of pairs of structures one with an undashed arrow marked and the other with a dashed arrow marked. Similarly as in the Wigner case,
\begin{equation}
\begin{split}
\mathcal{P}_2^2 = \,\,\, &
    \Bigg{[}\, \musmuab \,\, + \,\, \musmuba + \ofixmul\!\! \star \textsc{Seq} \Bigg{(} \obchadab\!\!\!\!\!\! + \!\!\!\obchadba\!\!\!\ \Bigg{)} \star \fixdar
    \\
    \,\, & + \oachamua \,\,\, \star \, \textsc{Seq} \Bigg{(} \obchauab\!\!\!\!\!\! + \!\!\!\obchauba\!\!\! \Bigg{)} \star\,\, \fixuap \Bigg{]}\\
    \star \, \, & \Bigg{[}\, \musmdab \,\, + \musmdba + \ofixmdl \!\! \star  \textsc{Seq} \Bigg{(} \obchauab\!\!\!\!\!\! + \!\!\!\obchauba\!\!\!\ \Bigg{)} \star \fixuar
    \\
    \,\, & + \oachamda\,\,\, \star \, \textsc{Seq} \Bigg{(} \obchadab\!\!\!\!\!\! + \!\!\!\obchadba\!\!\! \Bigg{)} \star\,\, \fixdap \Bigg{]}.
    \end{split}
\end{equation}
For its EGF, we thus have
\begin{equation*}
\begin{split}
&\left[a_1 (\lambda_{21}+\lambda_{12}) \frac{t^2}{2} +a_1\left(\frac{1}{1-2\lambda_{11}^2 \frac{t^2}{2!}}\right) c_2 t -a_1 \lambda_{11}t\left(\frac{1}{1-2\lambda_{11}^2 \frac{t^2}{2!}}\right) c_1 t \right]\\
& \left[a_2 (\lambda_{21}+\lambda_{12}) \frac{t^2}{2} + a_2\left(\frac{1}{1-2\lambda_{11}^2 \frac{t^2}{2!}}\right) c_1 t -a_2 \lambda_{11}t\left(\frac{1}{1-2\lambda_{11}^2 \frac{t^2}{2!}}\right) c_2 t\right],
\end{split}
\end{equation*}
which is, after simplification, equal to
\begin{equation*}
    a_1a_2\left((\lambda_{21}+\lambda_{12})\frac{t^2}{2}+\frac{c_2 t}{1-\lambda_{11}^2 t^2}-\frac{c_1 \lambda_{11} t^2}{1-\lambda_{11}^2 t^2}\right) \left((\lambda_{21}+\lambda_{12})\frac{t^2}{2}+\frac{c_1 t}{1-\lambda_{11}^2 t^2}-\frac{c_2 \lambda_{11}t^2}{1-\lambda_{11}^2 t^2}\right).
\end{equation*}

\paragraph{Conclusion}
Adding the contributions together, since $\mathcal{H}_2^2 = (\mathcal{A}_2^2+\mathcal{O}_2^2+\mathcal{C}_2^2+\mathcal{P}_2^2) \star \mathcal{H}_2^0$,
\begin{align*}
H_2^2(t) =  a_1 a_2 & \left[ \frac{\lambda _{11} t^2}{1-\lambda _{11} t}+\frac{t}{1-\lambda _{11}^2 t^2}-t+\frac{e^{\left(\lambda_{02}-\lambda_{20}\right) t}-1}{\lambda_{02}-\lambda_{20} e^{\left(\lambda_{02}-\lambda_{20}\right) t}}+\left(\left(\lambda_{21}+\lambda_{12}\right) \frac{t^2}{2} + \frac{c_2 t}{1\!-\!\lambda_{11}^2 t^2} \right.\right. \\
& \left.\left. \,\, - \,\frac{c_1 \lambda_{11} t^2}{1\!-\!\lambda_{11}^2 t^2}\right) \left(\left(\lambda_{21}+\lambda_{12}\right) \frac{t^2}{2}\!+\!\frac{c_1 t}{1\!-\!\lambda_{11}^2 t^2}\!-\!\frac{c_2 \lambda_{11} t^2}{1\!-\!\lambda_{11}^2 t^2}\right)\right]\!H_2^0(t).
\end{align*}

\subsubsection{Full generating function}
\begin{corollary}
Adding up the contributions, we get for any distribution $X_{ij}$ and $Z_{ii}$,
\begin{align*}
& H_2(t,a_1,a_2) = H_2^0(t)+H_2^1(t)+H_2^2(t) = \Bigg{[}1+(a_1 + a_2) (\lambda_{21}+\lambda_{12}) \frac{t^2}{2} +\frac{(a_1c_2+a_2c_1) t}{1-\lambda_{11}^2 t^2} \\
& - \left. \frac{\lambda_{11}(a_1c_1+a_2c_2) t^2}{1-\lambda_{11}^2t^2} + a_1 a_2 \Bigg{(}\frac{\lambda _{11} t^2}{1-\lambda _{11} t}+\frac{t}{1-\lambda _{11}^2 t^2}-t+\frac{e^{\left(\lambda_{02}-\lambda_{20}\right) t}-1}{\lambda_{02}-\lambda_{20} e^{\left(\lambda_{02}-\lambda_{20}\right) t}}+ \bigg{(}\left(\lambda_{21}+\lambda_{12}\right) \frac{t^2}{2} \right.\\
& + \frac{c_2 t}{1\!-\!\lambda_{11}^2 t^2} - \,\frac{c_1 \lambda_{11} t^2}{1\!-\!\lambda_{11}^2 t^2} \bigg{)} \bigg{(}\left(\lambda_{21}+\lambda_{12}\right) \frac{t^2}{2}\!+\!\frac{c_1 t}{1\!-\!\lambda_{11}^2 t^2}\!-\!\frac{c_2 \lambda_{11} t^2}{1\!-\!\lambda_{11}^2 t^2}\bigg{)}\Bigg{)}\Bigg{]} H_2^0(t).
\end{align*}
\end{corollary}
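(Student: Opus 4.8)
The plan is to invoke the mark-count decomposition $H_2(t,a_1,a_2) = H_2^0(t) + H_2^1(t) + H_2^2(t)$, recorded as item (3) of the setup for this section, and then substitute the three exponential generating functions computed in the three preceding subsubsections. The decomposition itself rests on the observation from Section~\ref{sec:markedmultigraphs} that $\det(Z + a_r J + c_r Id)$ is linear in $a_r$, so each of the two determinant factors contributes at most one marked edge; hence every nontrivial marked multi-graph in $\mathcal{H}_2^\times$ carries a number of marks $r \in \{0,1,2\}$, and summing $h^r_2(n)$ over $r$ and passing to EGFs recovers the full generating function.

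The key simplification is that both the one-mark and two-mark EGFs were already produced in the form (prefactor)$\,\cdot H_2^0(t)$. Indeed, because every unmarked structure remains available alongside any collection of marked structures, the star-product decompositions $\mathcal{H}_2^1 = (\mathcal{M}_2^1 + \mathcal{C}_2^1 + \mathcal{D}_2^1) \star \mathcal{H}_2^0$ and $\mathcal{H}_2^2 = (\mathcal{A}_2^2 + \mathcal{O}_2^2 + \mathcal{C}_2^2 + \mathcal{P}_2^2) \star \mathcal{H}_2^0$ hold; since $\star$ becomes ordinary multiplication at the level of EGFs, we obtain $H_2^1(t) = P_1(t)\,H_2^0(t)$ and $H_2^2(t) = P_2(t)\,H_2^0(t)$, where $P_1$ and $P_2$ are precisely the bracketed rational/exponential expressions derived there. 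Writing $H_2^0(t) = 1 \cdot H_2^0(t)$, the three summands share the common factor $H_2^0(t)$, so I would collect the prefactors into $1 + P_1(t) + P_2(t)$; this is exactly the bracketed quantity in the claimed formula, leaving $H_2^0(t)$ itself as the trailing factor to be transcribed from the zero-mark computation.

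The only steps that are not mechanical are the ones already discharged inside the two-mark count, and I would flag those as the part that needs care rather than the final addition. In particular, the pair-of-one-marked-structures term $\mathcal{P}_2^2$ must be reduced from a product of two length-three bracketed factors to the symmetric product of the two factors of the type $\left((\lambda_{21}+\lambda_{12})\tfrac{t^2}{2} + \tfrac{c_2 t}{1-\lambda_{11}^2 t^2} - \tfrac{c_1 \lambda_{11} t^2}{1-\lambda_{11}^2 t^2}\right)$, using $\tfrac{1}{1 - 2\lambda_{11}^2 t^2/2!} = \tfrac{1}{1-\lambda_{11}^2 t^2}$; and one must confirm the bookkeeping that there are no $2$-marked dumbbells (two marks on a single arrow-type being forbidden), while $2$-marked fixed points and $2$-marked mussels are absorbed, respectively, into $2$-marked chains of a single vertex and $2$-marked parallel/anti-parallel loops of length two. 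Granting these, the final corollary is a one-line summation, and I expect no obstacle beyond faithfully recording $H_2^0(t)$, $H_2^1(t)$, and $H_2^2(t)$ and factoring out the common $H_2^0(t)$.
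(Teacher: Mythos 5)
Your proposal is correct and matches the paper's own argument exactly: the paper likewise obtains the corollary by summing $H_2(t,a_1,a_2)=\sum_{r=0}^{2}H_2^r(t)$, using the star-product decompositions $\mathcal{H}_2^1=(\mathcal{M}_2^1+\mathcal{C}_2^1+\mathcal{D}_2^1)\star\mathcal{H}_2^0$ and $\mathcal{H}_2^2=(\mathcal{A}_2^2+\mathcal{O}_2^2+\mathcal{C}_2^2+\mathcal{P}_2^2)\star\mathcal{H}_2^0$ to write $H_2^1$ and $H_2^2$ as prefactors times $H_2^0(t)$ and then factoring out the common $H_2^0(t)$. Your flagged caveats (simplifying $\mathcal{P}_2^2$ via $\frac{1}{1-2\lambda_{11}^2 t^2/2!}=\frac{1}{1-\lambda_{11}^2 t^2}$, excluding $2$-marked dumbbells, and absorbing $2$-marked fixed points and mussels into chains and length-two loops) are precisely the bookkeeping the paper discharges in the two-mark subsubsection and its accompanying remark.
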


\printbibliography[heading=bibintoc]

\end{document}